\newtheorem{defin}{Definition}
\newtheorem{lemma}{Lemma}
\newtheorem{prop}{Proposition}
\newtheorem{theo}{Theorem}
\newenvironment{proof}{\medskip\par\noindent{\bf Proof}}{\hfill $\Box$
\medskip\par}
\begin{document}
\title{On singularly perturbed linear initial value problems with mixed irregular and Fuchsian time singularities}
\author{{\bf A. Lastra, S. Malek}\\
University of Alcal\'{a}, Departamento de F\'{i}sica y Matem\'{a}ticas,\\
Ap. de Correos 20, E-28871 Alcal\'{a} de Henares (Madrid), Spain,\\
University of Lille 1, Laboratoire Paul Painlev\'e,\\
59655 Villeneuve d'Ascq cedex, France,\\
{\tt alberto.lastra@uah.es}\\
{\tt Stephane.Malek@math.univ-lille1.fr }}
\date{January, 11 2019}
\maketitle
\thispagestyle{empty}
{ \small \begin{center}
{\bf Abstract}
\end{center}
We consider a family of linear singularly perturbed PDE relying on a complex perturbation parameter $\epsilon$. As in the former study
\cite{lama1} of the authors, our problem possesses an irregular singularity in time located at the origin but, in the present work, it entangles
also differential operators of Fuchsian type acting on the time variable. As a new feature, a set of sectorial holomorphic solutions are built
up through {\it iterated} Laplace transforms and Fourier inverse integrals following a classical multisummability procedure introduced by
W. Balser. This construction has a direct issue on the Gevrey bounds of their asymptotic expansions w.r.t $\epsilon$ which are shown to bank
on the order of the leading term which combines both irregular and Fuchsian types operators.\medskip

\noindent Key words: asymptotic expansion, Borel-Laplace transform, Fourier transform, initial value problem,
formal power series, linear integro-differential equation, partial differential equation, singular perturbation. 2010 MSC: 35R10, 35C10, 35C15, 35C20.}
\bigskip \bigskip

\section{Introduction}

In this paper, we aim attention at a family of singularly perturbed linear partial differential equations which combines two varieties of
differential operators acting on the time variable of so-called irregular and Fuchsian types. The definition of irregular type operators
in the context of PDE can be found in the paper \cite{man2} by T. Mandai and we refer to the excellent textbook \cite{geta} by R. G\'{e}rard and H. Tahara for an extensive study of Fuchsian ordinary and partial differential equations.

The problem under study can be displayed as follows
\begin{multline}
Q(\partial_{z})u(t,z,\epsilon) = R_{D}(\partial_{z})\epsilon^{k\delta_{D}} (t^{k+1}\partial_{t})^{\delta_{D}}
(t\partial_{t})^{m_D}u(t,z,\epsilon)\\
+ P(z,\epsilon,t^{k+1}\partial_{t},t\partial_{t},\partial_{z})u(t,z,\epsilon) + f(t,z,\epsilon) \label{main_PDE_u_intro}
\end{multline}
for vanishing initial data $u(0,z,\epsilon) \equiv 0$, where $k,\delta_{D},m_{D} \geq 1$ are integers, $Q(X),R_{D}(X)$ stand for polynomials
with complex coefficients and $P(z,\epsilon,V_{1},V_{2},V_{3})$ represents a polynomial in the arguments $V_{1},V_{2},V_{3}$ with holomorphic
coefficients w.r.t the perturbation parameter $\epsilon$ in the vicinity of the origin in $\mathbb{C}$ and holomorphic relatively to the
space variable $z$ on a horizontal strip in $\mathbb{C}$ with the shape $H_{\beta} = \{ z \in \mathbb{C} / |\mathrm{Im}(z)| < \beta \}$, for some
given $\beta > 0$. The forcing term $f(t,z,\epsilon)$ relies analytically on $\epsilon$ near the origin and holomorphically on $z$ on
$H_{\beta}$ and defines either an analytic function near 0 or an entire function with (at most) exponential growth of prescribed order w.r.t the time $t$.

This work can be seen as a continuation of our previous study \cite{lama1} where we focused at the next problem (in the linear setting)
\begin{multline}
 Q(\partial_{z})t^{\kappa + 1}\partial_{t}y(t,z,\epsilon) = \epsilon^{(\delta_{D}-1)\kappa}
 t^{\delta_{D}(\kappa + 1)}\partial_{t}^{\delta_{D}}R_{D}(\partial_{z})y(t,z,\epsilon)\\
 + \sum_{l=1}^{D-1} \epsilon^{\Delta_{l}} t^{d_{l} + \kappa + 1}\partial_{t}^{\delta_l}R_{l}(\partial_{z})y(t,z,\epsilon) +
 t^{\kappa + 1}f(t,z,\epsilon) \label{former_main_PDE_y_intro}
\end{multline}
for vanishing initial data $y(0,z,\epsilon) \equiv 0$, where $Q,R_{D},R_{l}$, $l=1,\ldots,D-1$ stand for polynomials, $D \geq 2$,
$\delta_{D},\kappa \geq 1$, $\Delta_{l},d_{l},\delta_{l} \geq 0$ are integers and $f(t,z,\epsilon)$ represents a holomorphic function near
the origin w.r.t $(t,\epsilon)$ which is holomorphic on $H_{\beta}$ w.r.t $z$ as above. This equation involves exclusively time differential
operators of irregular type which carry one single level (named also rank in the literature) $\kappa$, meaning that all operators
$t^{\delta_{D}(\kappa+1)}\partial_{t}^{\delta_{D}}$ and $t^{d_{l} + \kappa + 1}\partial_{t}^{\delta_{l}}$ appearing in
(\ref{former_main_PDE_y_intro}) can be expressed as $P(t,t^{\kappa+1}\partial_{t})$ for some polynomials $P(t,V_{1}) \in \mathbb{C}[t,V_{1}]$
through the expansion (\ref{Tahara_formula}) stated in Lemma 4, under the requirements
$$ d_{l} + \kappa + 1 \geq \delta_{l}(\kappa+1) $$
for all $1 \leq l \leq D-1$. For our present problem (\ref{main_PDE_u_intro}), this condition is in general not fulfilled. Namely, in
the example treated after
Theorem 2, the operator $t^{4}\partial_{t}(t\partial_{t}) = t^{4}\partial_{t} + t^{5}\partial_{t}^{2}$ writes as a sum of two irregular
operators that possess two different ranks, namely the rank of $t^{4}\partial_{t}$ is $3$ and $t^{5}\partial_{t}^{2}$ is of rank 1 since
$t^{5}\partial_{t}^{2} = t(t^{2}\partial_{t})^{2} - 2t^{2}(t^{2}\partial_{t})$.

Under appropriate conditions on the building blocks of (\ref{former_main_PDE_y_intro}), we constructed a set of genuine bounded holomorphic solutions in the form of Laplace transforms of order $\kappa$ in time $t$ and Fourier inverse transform in space $z$,
$$ y_{p}(t,z,\epsilon) = \frac{\kappa}{(2\pi)^{1/2}} \int_{-\infty}^{+\infty} \int_{L_{d_p}} \omega^{d_p}(u,m,\epsilon)
\exp( -(\frac{u}{\epsilon t})^{\kappa} ) e^{izm} \frac{du}{u} dm $$
for $0 \leq p \leq \varsigma-1$, with $\varsigma \geq 2$, where $\omega^{d_p}(u,m,\epsilon)$ stands for a function with (at most)
exponential growth of order $\kappa$ containing the halfline of integration $L_{d_p} = \mathbb{R}_{+}\exp(\sqrt{-1}d_{p})$
for some well chosen directions $d_{p} \in \mathbb{R}$ and holomorphic near 0 w.r.t $u$, owning exponential decay w.r.t $m$ on $\mathbb{R}$
and relying analytically on $\epsilon$ near 0. The resulting maps $y_{p}(t,z,\epsilon)$ define bounded holomorphic functions on
domains $\mathcal{T} \times H_{\beta} \times \mathcal{E}_{p}$ for a suitable bounded sector $\mathcal{T}$ at 0 and
$\underline{\mathcal{E}} = \{ \mathcal{E}_{p} \}_{0 \leq p \leq \varsigma-1}$ is a set of sectors whose union contains a full neighborhood
of 0 in $\mathbb{C}^{\ast}$ and is called a good covering (see Definition 6). Furthermore, precise information about their
asymptotic expansions as $\epsilon$ tends to 0 is provided. Namely, all the solutions $\epsilon \mapsto y_{p}(t,z,\epsilon)$ share on
$\mathcal{E}_{p}$ a common asymptotic expansion $\hat{y}(t,z,\epsilon) = \sum_{n \geq 0} y_{n}(t,z) \epsilon^{n}$ with
bounded holomorphic coefficients $y_{n}(t,z)$ on $\mathcal{T} \times H_{\beta}$. Besides, this asymptotic expansion appears to be (at most)
of Gevrey order $1/\kappa$ (see Definition 8 for a description of this notion). In the special configuration where the aperture of
$\mathcal{E}_{p}$ can be chosen slightly larger than $\pi/\kappa$, the function $\epsilon \mapsto y_{p}(t,z,\epsilon)$ becomes the
so-called $\kappa-$sum of $\hat{y}$ on $\mathcal{E}_{p}$ as described in Definition 8.

Throughout the present study, our goal is to achieve a comparable statement, that is the construction of a set of sectorial holomorphic
solutions to (\ref{main_PDE_u_intro}) and the description of their asymptotic expansions as $\epsilon$ tends to 0 with dominated Gevrey bounds.
However, the presence of the Fuchsian operators modifies radically our approach in comparison with our previous investigation \cite{lama1}.
Indeed, according to the appearance of time differential operators of irregular type with different ranks as noticed above, we witness that
the set of solutions $u_{p}(t,z,\epsilon)$, $0 \leq p \leq \varsigma - 1$ of (\ref{main_PDE_u_intro}) (detailed later in the introduction) cannot
be built up as a single Laplace transform in time $t$ but as {\it iterated} Laplace transforms which entangle two orders $k$ and $k'$
(which can be different) that are related to the leading term in (\ref{main_PDE_u_intro}), see (\ref{rel_k_deltaD_mD_k_prime}). Moreover,
this construction has a direct effect on their asymptotic expansion w.r.t $\epsilon$ whose Gevrey bounds are sensitive to the contributions of
both irregular and Fuchsian operators and depends on the pair $(k,k')$, see Theorem 2.

A similar phenomenon has already been observed in a different context by the authors and J. Sanz in \cite{lamasa1} for some Cauchy problem
of the form (in the linear setting)
\begin{equation}
\epsilon^{r_3}(t^{2}\partial_{t})^{r_2}(z\partial_{z})^{r_1}\partial_{z}^{S}X(t,z,\epsilon) =
P(t,z,\epsilon,\partial_{t},\partial_{z})X(t,z,\epsilon) \label{former_CP_fuch_irreg_intro}
\end{equation}
for given initial Cauchy data
$$ (\partial_{z}^{j}X)(t,0,\epsilon) = \varphi_{j}(t,\epsilon) \ \ , \ \ 0 \leq j \leq S-1 $$
where $r_{1} \geq 0$, $r_{2},r_{3},S \geq 1$ are integers, $P$ stands for a polynomial and the functions $\varphi_{j}(t,\epsilon)$
are bounded holomorphic on domains $\mathcal{T} \times \mathcal{E}_{p}$, $0 \leq p \leq \varsigma-1$, for $\mathcal{T}$,$\mathcal{E}_{p}$
sectors given as above. In this context, the Fuchsian operator $(z\partial_{z})^{r_1}$ acts on the space variable $z$ near 0 in
$\mathbb{C}$ and contributes to the Gevrey order of the asymptotic expansion $\hat{X}(t,z,\epsilon) = \sum_{n \geq 0} X_{n}(t,z) \epsilon^{n}$
of the genuine holomorphic solutions $X_{p}(t,z,\epsilon)$ of (\ref{former_CP_fuch_irreg_intro}) on
$\mathcal{T} \times D(0,r) \times \mathcal{E}_{p}$ w.r.t $\epsilon$ which turns out to be $\frac{r_{1}+r_{2}}{r_3}$. Here, the mechanism of
enlargement of the Gevrey order caused by the Fuchsian operators appears through the presence of small divisors in the Borel plane.

Under proper restrictions on the shape of (\ref{main_PDE_u_intro}) detailed in the statement of Theorem 1, we can select
\begin{enumerate}
 \item[i)] a set $\underline{\mathcal{E}}$ of bounded sectors $\mathcal{E}_{p}$ as described above, which constitutes a good covering
 in $\mathbb{C}^{\ast}$ (see Definition 6),
 \item[ii)] a bounded sector $\mathcal{T}$ centered at 0,
 \item[iii)] a set of directions $\mathfrak{d}_{p} \in \mathbb{R}$, $0 \leq p \leq \varsigma-1$, chosen in a way that the halflines
 $L_{\mathfrak{d}_{p}} = \mathbb{R}_{+}\exp( \sqrt{-1} \mathfrak{d}_{p})$ bypass all the roots of the
 polynomial $u \mapsto Q(im) - R_{D}(im)k^{\delta_D}(k')^{m_D}u^{k\delta_{D}}$ whenever $m \in \mathbb{R}$,
\end{enumerate}
for which we can model a family of bounded holomorphic solutions $u_{p}(t,z,\epsilon)$ on the domains
$\mathcal{T} \times H_{\beta} \times \mathcal{E}_{p}$. Each solution $u_{p}$ is expressed as a Laplace transform of order $k$ in time $t$
and Fourier inverse integral in space $z$,
$$ u_{p}(t,z,\epsilon) = \frac{k}{(2\pi)^{1/2}}\int_{-\infty}^{+\infty} \int_{L_{\gamma_p}} W^{\mathfrak{d}_{p}}(\tau,m,\epsilon)
\exp( - (\frac{\tau}{\epsilon t})^{k} ) e^{izm} \frac{d\tau}{\tau} dm $$
where the {\it Borel/Fourier} map $W^{\mathfrak{d}_{p}}(\tau,m,\epsilon)$ is itself represented as a Laplace transform of order $k'$
in the Borel plane,
$$ W^{\mathfrak{d}_{p}}(\tau,m,\epsilon) = k'\int_{L_{\gamma_p}} w^{\mathfrak{d}_{p}}(u,m,\epsilon)
\exp( - (\frac{u}{\tau})^{k'} ) \frac{du}{u} $$
where $w^{\mathfrak{d}_{p}}(u,m,\epsilon)$ stands for an analytic function near $u=0$ with (at most) exponential growth of some
order $k_{1} < k'$ on a sector containing $L_{\mathfrak{d}_{p}}$ w.r.t $u$, suffering exponential decay w.r.t $m$ on $\mathbb{R}$, with analytic
dependence on $\epsilon$ near $\epsilon=0$ (see Theorem 1).

Furthermore, as detailed in Theorem 2, all the functions $\epsilon \mapsto u_{p}(t,z,\epsilon)$ share a common asymptotic expansion
$\hat{u}(t,z,\epsilon) = \sum_{m \geq 0} h_{m}(t,z) \epsilon^{m}$ on $\mathcal{E}_{p}$ with bounded holomorphic coefficients
$h_{m}(t,z)$ on $\mathcal{T} \times H_{\beta}$. The essential point that needs to be stressed is that this asymptotic expansion turns out
to be of Gevrey order (at most) $\frac{1}{\kappa}= \frac{1}{k} + \frac{1}{k'}$. When the aperture of one $\mathcal{E}_{p}$ can be chosen
a bit larger than $\pi/\kappa$, the map $\epsilon \mapsto u_{p}(t,z,\epsilon)$ is elected as the $\kappa-$sum of $\hat{u}$ on
$\mathcal{E}_{p}$, a configuration that can actually arise as shown in the example treated after Theorem 2.

The manner we build up our solutions as iterated Laplace transforms is known in the literature as a {\it multisummability} procedure
as described in the classical textbooks by W. Balser, \cite{ba}, \cite{ba2}. Namely, there exist three equivalent approaches to
multisummability, the first is based on acceleration kernels and goes back to the seminal works by J. \'{E}calle (see Chapter 5 of \cite{ba}),
the second, due to W. Balser, is performed through a finite number of iterations of Laplace transforms (described in Section 7.2 of \cite{ba})
and the third, known as Malgrange-Ramis approach, is based on sheaf theory aspects and is very clearly explained in Chapter 7 of the
recent lectures notes by M. Loday-Richaud, see \cite{lod}. In this paper, the second of these methods appears naturally. It is worth noticing
the two other procedures have been successfully applied by the authors to show parametric multisummability of formal solutions to
singularly perturbed equations of the shape (\ref{former_main_PDE_y_intro}) written in factorized forms, see \cite{lama2}. We observe that
in our setting (\ref{main_PDE_u_intro}), no situation of parametric multisummability w.r.t $\epsilon$ is reached for our solutions
$u_{p}(t,z,\epsilon)$.

The multisummable structure of formal solutions to linear and nonlinear ODE has been revealed two decades ago, for that we refer to some
outstanding fundamental works \cite{ba1}, \cite{babrrasi}, \cite{br}, \cite{lori}, \cite{malra}, \cite{rasi}. These last years,
applications of these notions attract a lot of attention in the framework of PDE. Not pretending to be exhaustive, we just mention
some recent references among the growing literature somehow related to our recent contributions. In the linear case of two complex variables
involving constant coefficients, we quote the important paper by W. Balser, \cite{ba4}, extended lately by interesting works by
K. Ichinobe, \cite{ich}, \cite{ich1} and S. Michalik, \cite{mi}, \cite{mi1}. In the case of general time dependent coefficients, H. Tahara
and H. Yamazawa have recently shown the multisummability for formal solutions expanded in the time variable provided that the forcing term belongs to a suitable class of entire functions with finite exponential order in the space variables, see \cite{taya}.

\noindent Our paper is organized as follows.\\
In Section 2, we state the definition of Laplace transforms of order $k$ among the positive integers and classical identities for the
Fourier inverse transform acting on exponentially decaying functions are formulated.\\
In Section 3, we present our main problem (\ref{main_PDE_u}) and display the full strategy leading to its resolution. We describe the
structure of the building blocks of (\ref{main_PDE_u}), especially the forcing term which is supposed to be assembled as iterated Laplace
transforms of functions with appropriate exponential growth. Then, in a first step, possible candidates for solutions are selected among
Laplace transforms of order $k$ and Fourier inverse integrals of Borel maps $W$ with exponential growths on large enough unbounded
sectors and with exponential decay on the real line, giving rise to an integro-differential equation (\ref{main_diff_conv_W}) that $W$ needs
to satisfy. In a second undertaking, we assume that $W$ itself is represented as a Laplace transform of suitable order $k'$ of a second Borel
map $w$ with again convenient growth on unbounded sectors and exponential decay on $\mathbb{R}$. The expression $w$ is then adjusted to solve
an integral equation (\ref{main_conv_w}).\\
In Section 4, we first analyze bounds for linear convolution operators acting on Banach spaces of analytic functions on sectors and then
we solve the main convolution problem (\ref{main_conv_w}) within these spaces by means of a fixed point argument.\\
In Section 5, leaning on the resolution of (\ref{main_conv_w}) performed in Section 4, we build up genuine holomorphic solutions $W$
of equation (\ref{main_diff_conv_W}) fulfilling the required bounds.\\
In Section 6, we provide a set of actual holomorphic solutions (\ref{Laplace_up_theo}) to our initial equation (\ref{main_PDE_u}) by realizing
rearward the two steps of constructions described in Section 3.\\
At last, in Section 7, we achieve the existence of a common asymptotic expansion of Gevrey order (at most) $\frac{1}{k}+\frac{1}{k'}$
for the set of solutions mentioned above based on the crucial flatness estimates (\ref{exp_flat_difference_up}) as an application of a theorem
by Ramis and Sibuya.

\section{Laplace transforms of order $k$ and Fourier inverse maps}

Let $k \geq 1$ be an integer. We recall the definition of the Laplace transform of order $k$ as introduced in \cite{lama1}.
\begin{defin} We set
$S_{d,\delta} = \{ \tau \in \mathbb{C}^{\ast} : |d - \mathrm{arg}(\tau)| < \delta \}$ as some unbounded sector with bisecting direction
$d \in \mathbb{R}$ and aperture $2\delta > 0$ and $D(0,\rho)$ as a disc centered at 0 with radius $\rho>0$.
Consider a holomorphic function $w : S_{d,\delta} \cup D(0,\rho) \rightarrow \mathbb{C}$ that vanishes at 0 and satisfies the bounds :
there exist $C>0$ and $K>0$ such that
\begin{equation}
|w(\tau)| \leq C |\tau| \exp( K |\tau|^{k} ) \label{w_exp_bds}
\end{equation}
for all $\tau \in S_{d,\delta}$. We define the Laplace transform of $w$ of order $k$ in the direction $d$ as the integral transform
$$ \mathcal{L}_{k}^{d}(w)(T) = k \int_{L_{\gamma}} w(u) \exp( -(\frac{u}{T})^{k} ) \frac{du}{u} $$
along a half-line $L_{\gamma} = \mathbb{R}_{+}e^{\sqrt{-1}\gamma} \subset S_{d,\delta} \cup \{ 0 \}$, where $\gamma$ depends on
$T$ and is chosen in such a way that $\cos(k(\gamma - \mathrm{arg}(T))) \geq \delta_{1}$, for some fixed real number $\delta_{1}>0$.
The function $\mathcal{L}^{d}_{k}(w)(T)$ is well defined, holomorphic and bounded on any sector
$$ S_{d,\theta,R^{1/k}} = \{ T \in \mathbb{C}^{\ast} : |T| < R^{1/k} \ \ , \ \ |d - \mathrm{arg}(T) | < \theta/2 \},$$
where $0 < \theta < \frac{\pi}{k} + 2\delta$ and
$0 < R < \delta_{1}/K$.

If one sets $w(\tau) = \sum_{n \geq 1} w_{n} \tau^n$, the Taylor expansion of $w$, which converges on the disc $D(0,\rho/2)$, the Laplace
transform $\mathcal{L}_{k}^{d}(w)(T)$ has the formal series
$$ \hat{X}(T) = \sum_{n \geq 1} w_{n} \Gamma( \frac{n}{k}) T^{n} $$
as Gevrey asymptotic expansion of order $1/k$. This means that for all $0 < \theta_{1} < \theta$, two constants $C,M>0$ can be
selected with the bounds
$$ | \mathcal{L}_{k}^{d}(w)(T) - \sum_{p=1}^{n-1} w_{p} \Gamma( \frac{p}{k} ) T^{p} | \leq CM^{n}\Gamma( 1 + \frac{n}{k}) |T|^{n} $$
for all $n \geq 2$, all $T \in S_{d,\theta_{1},R^{1/k}}$.

In particular, if $w(\tau)$ represents an entire function w.r.t $\tau \in \mathbb{C}$ with the bounds (\ref{w_exp_bds}), its Laplace transform
$\mathcal{L}_{k}^{d}(w)(T)$ does not depend on the direction $d$ in $\mathbb{R}$ and represents a bounded holomorphic function on
$D(0,R^{1/k})$ whose Taylor expansion is represented by the convergent series $X(T) = \sum_{n \geq 1} w_{n}\Gamma( \frac{n}{k} ) T^{n}$
on $D(0,R^{1/k})$.
\end{defin}

We restate the definition of some family of Banach spaces mentioned in \cite{lama1}.
\begin{defin} Let $\beta, \mu \in \mathbb{R}$. We set
$E_{(\beta,\mu)}$ as the vector space of continuous functions $h : \mathbb{R} \rightarrow \mathbb{C}$ such that
$$ ||h(m)||_{(\beta,\mu)} = \sup_{m \in \mathbb{R}} (1+|m|)^{\mu} \exp( \beta |m|) |h(m)| $$
is finite. The space $E_{(\beta,\mu)}$ endowed with the norm $||.||_{(\beta,\mu)}$ becomes a Banach space.
\end{defin}

Finally, we remind the reader the definition of the inverse Fourier transform acting on the latter Banach spaces and some of its
handy formulas relative to derivation and convolution product as stated in
\cite{lama1}.

\begin{defin}
Let $f \in E_{(\beta,\mu)}$ with $\beta > 0$, $\mu > 1$. The inverse Fourier transform of $f$ is given by
$$ \mathcal{F}^{-1}(f)(x) = \frac{1}{ (2\pi)^{1/2} } \int_{-\infty}^{+\infty} f(m) \exp( ixm ) dm $$
for all $x \in \mathbb{R}$. The function $\mathcal{F}^{-1}(f)$ extends to an analytic bounded function on the strips
\begin{equation}
H_{\beta'} = \{ z \in \mathbb{C} / |\mathrm{Im}(z)| < \beta' \}. \label{strip_H_beta}
\end{equation}
for all given $0 < \beta' < \beta$.\\
a) Define the function $m \mapsto \phi(m) = imf(m)$ which belongs to the space $E_{(\beta,\mu-1)}$. Then, the next identity
\begin{equation}
\partial_{z} \mathcal{F}^{-1}(f)(z) = \mathcal{F}^{-1}(\phi)(z) \label{Fourier_derivation}
\end{equation}
occurs.\\
b) Take $g \in E_{(\beta,\mu)}$ and set 
$$ \psi(m) = \frac{1}{(2\pi)^{1/2}} \int_{-\infty}^{+\infty} f(m-m_{1})g(m_{1}) dm_{1} $$
as the convolution product of $f$ and $g$. Then, $\psi$ belongs to $E_{(\beta,\mu)}$ and moreover,
\begin{equation}
\mathcal{F}^{-1}(f)(z) \mathcal{F}^{-1}(g)(z) = \mathcal{F}^{-1}(\psi)(z) \label{Fourier_product}
\end{equation}
for all $z \in H_{\beta}$.
\end{defin}

\section{Outline of the main initial value problem and related auxiliary problems}

We set $k \geq 1$ as an integer. Let $m_{D},\delta_{D} \geq 1$ be integers. We assume the existence of an integer $k' \geq 1$ such
that
\begin{equation}
 k \delta_{D} = m_{D}k' \label{rel_k_deltaD_mD_k_prime}
\end{equation}
We consider a finite set $I$ of $\mathbb{N}^{2}$ that fulfills the next feature,
\begin{equation}
kl_{1} \geq 1 + l_{2}k' \label{rel_kl_one_l_two_k_prime}
\end{equation}
whenever $(l_{1},l_{2}) \in I$ and we set non negative integers $\Delta_{\bf l} \geq 0$ with
\begin{equation}
 \Delta_{\bf l} - kl_{1} \geq 0 \label{rel_Deltal_k}
\end{equation}
for all ${\bf l}=(l_{1},l_{2}) \in I$.

Let $Q(X),R_{D}(X),R_{\bf l}(X) \in \mathbb{C}[X]$, ${\bf l} \in I$, be polynomials such that
\begin{equation}
\mathrm{deg}(Q) = \mathrm{deg}(R_{D}) \geq \mathrm{deg}(R_{\bf l}) \ \ , \ \ 
Q(im) \neq 0 \ \ , \ \ R_{D}(im) \neq 0 \label{constraints_degree_coeff}
\end{equation}
for all $m \in \mathbb{R}$, all ${\bf l} \in I$.

We consider a family of linear singularly perturbed initial value problems
\begin{multline}
Q(\partial_{z})u(t,z,\epsilon) = R_{D}(\partial_{z}) \epsilon^{k \delta_{D}}
(t^{k+1}\partial_{t})^{\delta_{D}}(t\partial_{t})^{m_{D}}u(t,z,\epsilon)
\\+ \sum_{{\bf l}=(l_{1},l_{2}) \in I} \epsilon^{\Delta_{{\bf l}}} c_{{\bf l}}(z,\epsilon) R_{{\bf l}}(\partial_{z})
(t^{k+1}\partial_{t})^{l_1}(t\partial_{t})^{l_2}u(t,z,\epsilon) + f(t,z,\epsilon) \label{main_PDE_u}
\end{multline}
for vanishing initial data $u(0,z,\epsilon) \equiv 0$.

The coefficients $c_{{\bf l}}(z,\epsilon)$ are built in the following manner.
For each ${\bf l} \in I$, we consider a function $m \mapsto C_{{\bf l}}(m,\epsilon)$ that belongs to the Banach space $E_{(\beta,\mu)}$
for some $\beta,\mu>0$, depends holomorphically on the parameter $\epsilon$ on some disc $D(0,\epsilon_{0})$ with radius $\epsilon_{0}>0$
and for which one can find a constant $C_{\bf l}>0$ with
\begin{equation}
\sup_{\epsilon \in D(0,\epsilon_{0})} ||C_{\bf l}(m,\epsilon)||_{(\beta,\mu)} \leq C_{\bf l} \label{defin_Cl}
\end{equation}
We construct
$$ c_{{\bf l}}(z,\epsilon) = \frac{1}{(2\pi)^{1/2}} \int_{-\infty}^{+\infty} C_{{\bf l}}(m,\epsilon) e^{izm} dm $$
as the inverse Fourier transform of the map $C_{{\bf l}}(m,\epsilon)$ for all ${\bf l} \in I$. As a result, $c_{{\bf l}}(z,\epsilon)$ is
bounded holomorphic w.r.t $\epsilon$ on $D(0,\epsilon_{0})$ and w.r.t
$z$ on any strip $H_{\beta'}$ for $0 < \beta' < \beta$ in view of Definition 3.

In order to display the forcing term, we need some preparation. We consider a sequence of functions $m \mapsto \psi_{n}(m,\epsilon)$,
for $n \geq 1$, that belong to the Banach space $E_{(\beta,\mu)}$ with the parameters $\beta,\mu>0$ given above and which relies
analytically and is bounded w.r.t $\epsilon$ on the disc $D(0,\epsilon_{0})$. We assume that the next bounds
\begin{equation}
\sup_{ \epsilon \in D(0,\epsilon_{0})} || \psi_{n}(m,\epsilon) ||_{(\beta,\mu)} \leq K_{0} (\frac{1}{T_{0}})^{n} \label{norm_bds_psi_n} 
\end{equation}
hold for all $n \geq 1$ and given constants $K_{0},T_{0}>0$. We define the formal series
$$ \psi(\tau,m,\epsilon) = \sum_{n \geq 1} \psi_{n}(m,\epsilon) \frac{\tau^{n}}{\Gamma( \frac{n}{k_1} + 1)} $$
for some integer $0 < k_{1} < k'$. According to the bounds of the Mittag-Leffler's function
$E_{\alpha}(z) = \sum_{n \geq 0} z^{n}/\Gamma(1 + \alpha n)$ for $\alpha \in (0,2)$ given in Appendix B of \cite{ba2}, we deduce 
that $\psi(\tau,m,\epsilon)$ represents an entire function w.r.t $\tau$ in $\mathbb{C}$ and we get the existence of
a constant $C_{k_1}>0$ (depending on $k_1$) such that
\begin{multline}
| \psi(\tau,m,\epsilon) | \leq K_{0} (1 + |m|)^{-\mu} \exp(-\beta |m|) \sum_{n \geq 1} (\frac{|\tau|}{T_{0}})^{n}/\Gamma( \frac{n}{k_1} + 1)\\
\leq K_{0}C_{k_1}(1 + |m|)^{-\mu} \exp( -\beta |m| ) \exp( (\frac{1}{T_{0}})^{k_1} |\tau|^{k_1} ) \label{psi_exp_bds}
\end{multline}
for all $\tau \in \mathbb{C}$, all $m \in \mathbb{R}$, all $\epsilon \in D(0,\epsilon_{0})$.

We set
$$ \Psi_{d}(\tau,m,\epsilon) = k' \int_{L_{d}} \psi(u,m,\epsilon) \exp( - (\frac{u}{\tau})^{k'} ) \frac{du}{u} $$
as the Laplace transform of $\psi(\tau,m,\epsilon)$ w.r.t $\tau$ of order $k'$ in direction $d \in \mathbb{R}$. Since
$\psi(\tau,m,\epsilon)$ defines an entire function w.r.t $\tau$ under the bounds (\ref{psi_exp_bds}), according to
Definition 1, we deduce that $\Psi_{d}$ does not depend on $d$ and can be written as a convergent series
$$ \Psi_{d}(\tau,m,\epsilon) = \sum_{n \geq 1} \psi_{n}(m,\epsilon)
\frac{\Gamma( \frac{n}{k'} )}{ \Gamma( \frac{n}{k_1} + 1)} \tau^{n} $$
From Appendix B of \cite{ba2}, we recall the Beta integral formula
\begin{equation}
B(\alpha,\beta) = \int_{0}^{1} (1-t)^{\alpha - 1} t^{\beta - 1} dt = \frac{\Gamma(\alpha) \Gamma(\beta)}{\Gamma( \alpha + \beta)} \label{Beta_integral}
\end{equation}
which is valid for all positive real numbers $\alpha,\beta > 0$. In particular, when $\alpha,\beta \geq 1$, we observe that
\begin{equation}
\Gamma(\alpha) / \Gamma( \alpha + \beta) \leq 1/\Gamma(\beta) \label{quotient_Gamma_alpha_beta}
\end{equation}
For the special case $\alpha = n/k'$ and $\beta = n(\frac{1}{k_{1}} - \frac{1}{k'}) + 1$, we obtain that
\begin{equation}
\frac{ \Gamma( \frac{n}{k'} ) }{\Gamma( \frac{n}{k_1} + 1)} \leq \frac{1}{\Gamma( n( \frac{1}{k_{1}} - \frac{1}{k'} ) + 1)}
\label{maj_quotient_Gamma_k_prime_k1}
\end{equation}
for all $n \geq k'$. In the following, we set $\kappa_{1}>1/2$ such as $\frac{1}{\kappa_{1}} = \frac{1}{k_{1}} - \frac{1}{k'}$. Again, in view of the bounds of the Mittag-Leffler's function, we deduce that $\Psi_{d}(\tau,m,\epsilon)$ represents an entire function w.r.t $\tau$ and
that there exist two constants $C_{\kappa_1},C_{\kappa_{1}}'>0$ (depending on $\kappa_{1}$) such that
\begin{multline}
 |\Psi_{d}(\tau,m,\epsilon)| \leq C_{\kappa_{1}}'K_{0} (1 + |m|)^{-\mu} e^{-\beta |m|} \sum_{n \geq 1}
 \frac{ (\frac{|\tau|}{T_{0}})^{n} }{ \Gamma( \frac{n}{\kappa_{1}} + 1) }\\
 \leq K_{0}C_{\kappa_{1}}(1 + |m|)^{-\mu} e^{-\beta |m|} \exp( (\frac{1}{T_{0}})^{\kappa_1} |\tau|^{\kappa_1} )
\end{multline}
for all $\tau \in \mathbb{C}$, all $m \in \mathbb{R}$, all $\epsilon \in D(0, \epsilon_{0})$. Let us assume that
\begin{equation}
\frac{1}{2} < \kappa_{1} \leq k \label{kappa1_less_k}
\end{equation}
In a last step, we set
$$ F_{d}(T,z,\epsilon) = \frac{k}{(2\pi)^{1/2}} \int_{-\infty}^{+\infty} \int_{L_{d}} \Psi_{d}(u,m,\epsilon)
\exp( -(\frac{u}{T})^{k} ) e^{izm} \frac{du}{u} dm
$$
as the Laplace transform of $\Psi_{d}(\tau,m,\epsilon)$ w.r.t $\tau$ of order $k$ and Fourier inverse transform w.r.t $m$. If we put
$$ F_{n}(z,\epsilon) = \frac{1}{(2\pi)^{1/2}} \int_{-\infty}^{+\infty} \psi_{n}(m,\epsilon) e^{izm} dm $$
for all $n \geq 1$, then owing to Definition 1, we notice that $F_{d}(T,z,\epsilon)$ can be written as a formal series
\begin{equation}
F_{d}(T,z,\epsilon) = \sum_{n \geq 1} F_{n}(z,\epsilon)
\frac{ \Gamma(\frac{n}{k}) \Gamma( \frac{n}{k'} )}{\Gamma( \frac{n}{k_1} + 1)} T^{n} 
\end{equation}
As a result, we see that $F_{d}$ does not depend on the direction $d$. We can provide bounds for $F_{n}(z,\epsilon)$ and get a constant $C_{\mu,\beta,\beta'}>0$ (depending on $\mu,\beta,\beta'$) with
\begin{multline}
|F_{n}(z,\epsilon)| \leq \frac{K_0}{(2\pi)^{1/2}} (\frac{1}{T_0})^{n} \int_{-\infty}^{+\infty} (1 + |m|)^{-\mu} e^{-\beta|m|}
e^{-\mathrm{Im}(z) m} dm \\
\leq \frac{K_0}{(2\pi)^{1/2}} (\frac{1}{T_0})^{n} \int_{-\infty}^{+\infty} (1 + |m|)^{-\mu}
e^{(\beta' - \beta)|m|} dm \leq \frac{C_{\mu,\beta,\beta'} K_{0}}{(2\pi)^{1/2}} (\frac{1}{T_0})^{n}
\end{multline}
for all $n \geq 1$, whenever $\epsilon \in D(0,\epsilon_{0})$ and $z$ belongs to the horizontal strip $H_{\beta'}$ for some $0 < \beta' < \beta$ (see Definition 3).
Bearing in mind (\ref{maj_quotient_Gamma_k_prime_k1}), we deduce a constant $C_{\kappa_{1}}'>0$ with
\begin{equation}
|F_{d}(T,z,\epsilon)| \leq \sum_{n \geq 1} \frac{C_{\kappa_{1}}'C_{\mu,\beta,\beta'} K_{0}}{(2\pi)^{1/2}}
\frac{\Gamma( \frac{n}{k} )}{ \Gamma( \frac{n}{\kappa_{1}} + 1) } (\frac{|T|}{T_0})^{n}
\end{equation}
In the case $\kappa_{1}=k$, we remark in particular that $F_{d}(T,z,\epsilon)$ is a convergent series on $D(0,T_{0}/2)$
w.r.t $T$, and defines a bounded holomorphic function w.r.t $z$ on $H_{\beta'}$ and w.r.t $\epsilon$ on $D(0,\epsilon_{0})$. On the other hand,
when $0 < \kappa_{1} < k$, we apply the inequality (\ref{quotient_Gamma_alpha_beta}) in the particular case $\alpha = n/k$ and
$\beta = n( \frac{1}{\kappa_{1}} - \frac{1}{k}) + 1$ and set $\kappa_{2}>1/2$ with $\frac{1}{\kappa_{2}} = \frac{1}{\kappa_1} - \frac{1}{k}$
in order to get
$$ \frac{\Gamma(\frac{n}{k})}{\Gamma( \frac{n}{k_1} + 1)} \leq \frac{1}{\Gamma( \frac{n}{\kappa_{2}} + 1)} $$
for all $n \geq k$. Again, calling back the bounds for the Mittag-Leffler's function, we deduce that $F_{d}(T,z,\epsilon)$
defines an entire function w.r.t $T$ with two constants $C_{\kappa_2},C_{\kappa_{2}}'>0$ such that
\begin{equation}
|F_{d}(T,z,\epsilon)| \leq \frac{C_{\kappa_{2}}'C_{\mu,\beta,\beta'} K_{0}}{(2\pi)^{1/2}}
\sum_{n \geq 1} \frac{1}{\Gamma( \frac{n}{\kappa_2} + 1)}
(\frac{|T|}{T_0})^{n} \leq \frac{C_{\kappa_2}C_{\mu,\beta,\beta'} K_{0}}{(2\pi)^{1/2}} \exp( (\frac{1}{T_0})^{\kappa_2} |T|^{\kappa_2} )
\end{equation}
for all $T \in \mathbb{C}$, all $z \in H_{\beta'}$ and $\epsilon \in D(0,\epsilon_{0})$.

Finally, we set the forcing term $f$ as a time rescaled version of $F_{d}$, namely
$$ f(t,z,\epsilon) = F_{d}(\epsilon t,z,\epsilon) $$
which defines a bounded holomorphic function on $D(0,r) \times H_{\beta'} \times D(0,\epsilon_{0})$ for any given
$0 < \beta' < \beta$ and radius $r>0$ such that $\epsilon_{0}r \leq T_{0}/2$ when $\kappa_{1}=k$ and represents an
entire function w.r.t $t$ provided that $0 < \kappa_{1} < k$.

Within this work, we are looking for time rescaled solutions of (\ref{main_PDE_u}) of the form
$$ u(t,z,\epsilon) = U(\epsilon t,z,\epsilon) $$
As a consequence, the expression $U(T,z,\epsilon)$, through the change of variable $T=\epsilon t$, is asked to solve the next
singular problem
\begin{multline}
Q(\partial_{z})U(T,z,\epsilon) = R_{D}(\partial_{z})(T^{k+1}\partial_{T})^{\delta_D}(T\partial_{T})^{m_D}U(T,z,\epsilon)\\
+ \sum_{{\bf l} = (l_{1},l_{2}) \in I} \epsilon^{\Delta_{{\bf l}} - kl_{1}}c_{{\bf l}}(z,\epsilon)R_{{\bf l}}(\partial_{z})
(T^{k+1}\partial_{T})^{l_1}(T\partial_{T})^{l_2}U(T,z,\epsilon) + F_{d}(T,z,\epsilon)   \label{main_PDE_U}
\end{multline}

We now recall the definition of Banach spaces already introduced in the paper \cite{lama2}.
\begin{defin}
Let $S_{d}$ be an unbounded sector centered at 0 with bisecting direction $d \in \mathbb{R}$. Let $\nu_{1},\beta,\mu,\kappa_{1}>0$
be positive real numbers. We set $E_{(\nu_{1},\beta,\mu,\kappa_{1})}^{d}$ as the vector space of continuous functions
$(\tau,m) \mapsto h(\tau,m)$ on $S_{d} \times \mathbb{R}$, which are holomorphic w.r.t $\tau$ on $S_{d}$ such that
$$ ||h(\tau,m)||_{(\nu_{1},\beta,\mu,\kappa_{1})} = \sup_{\tau \in S_{d},m \in \mathbb{R}} (1 + |m|)^{\mu} \frac{1}{|\tau|}
e^{\beta |m| - \nu_{1} |\tau|^{\kappa_1}} |h(\tau,m)| $$
is finite. The space $E_{(\nu_{1},\beta,\mu,\kappa_{1})}^{d}$ endowed with the norm $||.||_{(\nu_{1},\beta,\mu,\kappa_{1})}$ is a Banach space.
\end{defin}

In a {\bf first} step, we search for solutions $U(T,z,\epsilon)$ that can be expressed similarly to $F_{d}(T,z,\epsilon)$ as
integral representations through Laplace transforms of order $k$ and Fourier inverse transforms
$$ U_{\gamma}(T,z,\epsilon) = \frac{k}{(2\pi)^{1/2}} \int_{-\infty}^{+\infty} \int_{L_{\gamma}}
W(u,m,\epsilon) \exp( -(\frac{u}{T})^{k} ) e^{izm} \frac{du}{u} dm $$
where $L_{\gamma} = \mathbb{R}_{+}e^{\sqrt{-1}\gamma}$ stands for a halfline with direction $\gamma \in \mathbb{R}$
which belongs to the set $S_{d} \cup \{ 0 \}$ where $S_{d}$ represents a sector as given above in Definition 4.

Our target is the statement of a related problem fulfilled by the expression $W(u,m,\epsilon)$. Overall this section,
we assume that for all $\epsilon \in D(0,\epsilon_{0})$, the function $(\tau,m) \mapsto W(\tau,m,\epsilon)$ belongs to the Banach space
$E_{(\nu_{1},\beta,\mu,\kappa_{1})}^{d}$, where the constants $\beta,\mu,\kappa_{1}$ are fixed in the description of
the forcing term $f(t,z,\epsilon)$ given above and $\nu_{1}>0$ is some real number larger than $(1/T_{0})^{\kappa_1}$ (that will
be suitably chosen later on in Section 5).

We display some identities related to the action of differential operators
of irregular and fuchsian types.

\begin{lemma} The actions of the differential operators $T^{k+1}\partial_{T}$ and $T\partial_{T}$ on $U_{\gamma}$ are given by
\begin{multline}
T^{k+1}\partial_{T}U_{\gamma}(T,z,\epsilon) =
\frac{k}{(2\pi)^{1/2}} \int_{-\infty}^{+\infty} \int_{L_{\gamma}} ku^{k}W(u,m,\epsilon)
\exp( - (\frac{u}{T})^{k} ) e^{izm} \frac{du}{u} dm, \\
T\partial_{T}U_{\gamma}(T,z,\epsilon) = \frac{k}{(2\pi)^{1/2}} \int_{-\infty}^{+\infty} \int_{L_{\gamma}}
u\partial_{u}W(u,m,\epsilon) \exp( - (\frac{u}{T})^{k} ) e^{izm} \frac{du}{u} dm
\label{TkpartialTUgamma_andfuchsian}
\end{multline}
\end{lemma}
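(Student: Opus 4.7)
The plan is to reduce both identities to direct computations on the exponential kernel $E(u,T) := \exp(-(u/T)^{k})$, followed, in the Fuchsian case, by an integration by parts along the ray $L_{\gamma}$.

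For the first identity, I would simply differentiate under the integral sign. A direct calculation gives $\partial_{T} E(u,T) = k u^{k} T^{-(k+1)} E(u,T)$, hence $T^{k+1}\partial_{T} E(u,T) = k u^{k} E(u,T)$. Substituting this into the expression for $U_{\gamma}$, the factor $ku^{k}$ appears in front of $W(u,m,\epsilon)$, which is exactly the claimed formula. The differentiation under the integral is justified because $W \in E^{d}_{(\nu_{1},\beta,\mu,\kappa_{1})}$ grows at most like $|\tau|\exp(\nu_{1}|\tau|^{\kappa_{1}})(1+|m|)^{-\mu}e^{-\beta|m|}$ on $L_{\gamma}\times\mathbb{R}$, while $E(u,T)$ decays like $\exp(-\cos(k(\gamma-\arg T))|u|^{k}/|T|^{k})$ with $\cos(k(\gamma-\arg T))\ge \delta_{1}>0$ and $\kappa_{1}\le k$ by (\ref{kappa1_less_k}), so uniform absolute integrability of the differentiated integrand holds on any compact subsector of the domain of $T$.

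For the second identity, the key algebraic observation is that $E(u,T)$ is invariant under the symmetry swapping $u$ and $T$, which yields the kernel identity
\[
T\partial_{T} E(u,T) \;=\; -\,u\partial_{u} E(u,T).
\]
Inserting this in $T\partial_{T} U_{\gamma}$ and simplifying the factor $\frac{1}{u}$, I obtain
\[
T\partial_{T} U_{\gamma}(T,z,\epsilon) \;=\; -\,\frac{k}{(2\pi)^{1/2}}\int_{-\infty}^{+\infty}\int_{L_{\gamma}} W(u,m,\epsilon)\,\partial_{u} E(u,T)\, e^{izm}\, du\, dm.
\]
I would then integrate by parts in $u$ along the ray $L_{\gamma}$. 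The boundary contribution at $u=0$ vanishes because membership of $W$ in $E^{d}_{(\nu_{1},\beta,\mu,\kappa_{1})}$ forces $W(0,m,\epsilon)=0$; the boundary contribution at infinity vanishes since $W(u,m,\epsilon) E(u,T)$ carries the decay factor $\exp(\nu_{1}|u|^{\kappa_{1}} - \cos(k(\gamma-\arg T))|u|^{k}/|T|^{k})$, which tends to $0$ as $|u|\to\infty$ provided $\kappa_{1}\le k$ and $|T|$ is small enough—both assured here.

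Performing the integration by parts moves the $-\partial_{u}$ onto $W$ and cancels the minus sign, producing
\[
T\partial_{T} U_{\gamma}(T,z,\epsilon) \;=\; \frac{k}{(2\pi)^{1/2}}\int_{-\infty}^{+\infty}\int_{L_{\gamma}} \partial_{u}W(u,m,\epsilon)\, E(u,T)\, e^{izm}\, du\, dm,
\]
and rewriting $du = u\,\tfrac{du}{u}$ yields exactly the claimed expression with $u\partial_{u}W$. The only delicate point is justifying the integration by parts and the vanishing of the endpoint term at infinity; this is where the assumption $\kappa_{1}\le k$, the angular condition $\cos(k(\gamma-\arg T))\ge\delta_{1}>0$ built into the definition of Laplace transform of order $k$, and the Banach-space bound on $W$ all enter together. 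Everything else reduces to Fubini and differentiation under the integral sign, controlled by the same exponential decay estimate.
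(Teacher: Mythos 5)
Your proof is correct and follows essentially the same route as the paper: differentiation under the integral for the first identity, and for the second the observation that $T\partial_{T}\exp(-(u/T)^{k})=-u\partial_{u}\exp(-(u/T)^{k})$ followed by integration by parts along $L_{\gamma}$, with the boundary terms killed by the vanishing of $W$ at the origin and by the growth condition $\kappa_{1}\leq k$. The kernel-symmetry phrasing is just a tidier bookkeeping of the same computation the paper carries out explicitly.
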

\begin{proof} The first identity is a direct consequence of derivation under the integral symbol w.r.t $T$. We now deal with the second formula.
Namely, by derivation under the integral followed by an integration by parts, we obtain
\begin{multline*}
T\partial_{T}U_{\gamma}(T,z,\epsilon) = \frac{k}{(2\pi)^{1/2}} \int_{-\infty}^{+\infty} \int_{L_{\gamma}}
\frac{ku^{k-1}}{T^{k}}W(u,m,\epsilon) \exp( - (\frac{u}{T})^{k} ) e^{izm} du dm\\
= \frac{k}{(2\pi)^{1/2}} \int_{-\infty}^{+\infty} [ -\exp( -(\frac{u}{T})^{k} )W(u,m,\epsilon) ]_{u=0}^{u=\infty} e^{izm} dm\\
+ \frac{k}{(2\pi)^{1/2}} \int_{-\infty}^{+\infty} \int_{L_{\gamma}}
\partial_{u}W(u,m,\epsilon) \exp( - (\frac{u}{T})^{k} ) e^{izm} du dm
\end{multline*}
which yields the announced formula in (\ref{TkpartialTUgamma_andfuchsian}) since $W(u,m,\epsilon)$ is vanishing at $u=0$ and possesses
an exponential growth of order at most $\kappa_{1} \leq k$ w.r.t $u$.
\end{proof}

By virtue of the formulas (\ref{TkpartialTUgamma_andfuchsian}), together with
(\ref{Fourier_derivation}) and (\ref{Fourier_product}), we are now in position to state the first main integro-differential equation fulfilled by the expression $W(\tau,m,\epsilon)$ provided that $U_{\gamma}(T,z,\epsilon)$ solves (\ref{main_PDE_U}), namely
\begin{multline}
Q(im)W(\tau,m,\epsilon) = R_{D}(im)( k \tau^{k})^{\delta_{D}} (\tau \partial_{\tau})^{m_D}W(\tau,m,\epsilon) \\
+
\sum_{ {\bf l} = (l_{1},l_{2}) \in I} \epsilon^{\Delta_{{\bf l}} -kl_{1}} \frac{1}{(2\pi)^{1/2}}
\int_{-\infty}^{+\infty} C_{{\bf l}}(m-m_{1},\epsilon) (k \tau^{k})^{l_1} (\tau \partial_{\tau})^{l_2}R_{{\bf l}}(im_{1})
W(\tau,m_{1},\epsilon) dm_{1} \\
+ \Psi_{d}(\tau,m,\epsilon) \label{main_diff_conv_W}
\end{multline}

In a {\bf second} step, we seek for solutions of the previous equation (\ref{main_diff_conv_W}) in the form of a Laplace transform of order $k'$ as it is the case for its forcing term $\Psi_{d}(\tau,m,\epsilon)$. We first need to
introduce some Banach spaces that are similar to those provided in Definition 4 except that the functions are furthermore bounded holomorphic
on some disc centered at the origin w.r.t the first variable.
\begin{defin}
Let $U_{d}$ denote an unbounded sector centered at 0 with bisecting direction $d \in \mathbb{R}$ and let
$D(0,r)$ be the disc of radius $r>0$ centered at 0.
Let $\nu_{2},\beta,\mu,k_{1}>0$
be positive real numbers. We set $F_{(\nu_{2},\beta,\mu,k_{1})}^{d}$ as the vector space of continuous functions
$(u,m) \mapsto h(u,m)$ on $(U_{d} \cup D(0,r)) \times \mathbb{R}$, which are holomorphic w.r.t $u$ on
$U_{d} \cup D(0,r)$ such that
$$ ||h(u,m)||_{(\nu_{2},\beta,\mu,k_{1})} = \sup_{u \in U_{d} \cup D(0,r),m \in \mathbb{R}} (1 + |m|)^{\mu} \frac{1}{|u|}
e^{\beta |m| - \nu_{2} |u|^{k_1}} |h(u,m)| $$
is finite. The space $F_{(\nu_{2},\beta,\mu,k_{1})}^{d}$ equipped with the norm
$||.||_{(\nu_{2},\beta,\mu,k_{1})}$ is a Banach space.
\end{defin}

In the following, we assume that
\begin{equation}
W(\tau,m,\epsilon) = k' \int_{L_{\gamma}} w(u,m,\epsilon) \exp( -(\frac{u}{\tau})^{k'} ) \frac{du}{u}
\end{equation}
where $L_{\gamma} = \mathbb{R}_{+}e^{\sqrt{-1}\gamma}$ stands for a halfline with direction $\gamma \in \mathbb{R}$
which belongs to $U_{d} \cup \{ 0 \}$ that represents an unbounded sector centered at 0 with bisecting direction $d$. We take for granted that
for all $\epsilon \in D(0,\epsilon_{0})$ the function $(u,m) \mapsto w(u,m,\epsilon)$ appertains to the Banach space
$F^{d}_{(\nu_{2},\beta,\mu,k_{1})}$, where the constants $\beta,\mu,k_{1}$ are set throughout the construction of the forcing term
$f(t,z,\epsilon)$ stated overhead and where $\nu_{2} = (1/T_{0})^{k_1}$.

As in Lemma 1 overhead, we present some formulas related to the action of differential opertors of irregular type and multiplication by
monomials
\begin{lemma} 1) The action of the differential operators $\tau^{k'+1}\partial_{\tau}$ on $W(\tau,m,\epsilon)$ is given by
\begin{equation}
\tau^{k'+1}\partial_{\tau}W(\tau,m,\epsilon) = k' \int_{L_{\gamma}} k'u^{k'}w(u,m,\epsilon)
\exp( -(\frac{u}{\tau})^{k'} ) \frac{du}{u}
\end{equation}
2) Let $m' \geq 1$ be an integer. The action of the multiplication by $\tau^{m'}$ on $W(\tau,m,\epsilon)$ is described through the next formula
\begin{equation}
\tau^{m'}W(\tau,m,\epsilon) = k' \int_{L_{\gamma}} \left( \frac{u^{k'}}{\Gamma(\frac{m'}{k'})}\int_{0}^{u^{k'}}
(u^{k'} -s)^{\frac{m'}{k'}-1} w(s^{1/k'},m,\epsilon) \frac{ds}{s} \right) \exp( -(\frac{u}{\tau})^{k'} ) \frac{du}{u} 
\end{equation}

\end{lemma}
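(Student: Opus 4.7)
The plan is to handle the two identities separately, relying on the Banach space membership $w(u,m,\epsilon)\in F^d_{(\nu_2,\beta,\mu,k_1)}$ to legitimate the analytic manipulations.

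For part 1, my strategy is direct differentiation under the integral sign in the definition
$$W(\tau,m,\epsilon) = k'\int_{L_\gamma} w(u,m,\epsilon)\exp\bigl(-(u/\tau)^{k'}\bigr)\frac{du}{u}.$$
This is legitimate because $w$ has exponential growth of order $k_1<k'$ along $L_\gamma$, while the kernel $\exp(-(u/\tau)^{k'})$ decays exponentially of order $k'$ thanks to the choice of $\gamma$ (Definition 1), and this decay persists after multiplication by any polynomial in $u$. A one-line computation gives $\partial_\tau\exp(-(u/\tau)^{k'}) = k' u^{k'}\tau^{-k'-1}\exp(-(u/\tau)^{k'})$, after which multiplication by $\tau^{k'+1}$ yields the first formula.

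For part 2, my approach is to match formal Taylor series in $\tau$ on a small disc and then extend the identity by analytic continuation. Since $w$ vanishes at $u=0$ (a consequence of the $1/|u|$-weight in the $F^d$-norm) and is holomorphic on $D(0,r)\cup U_d$, it admits on $D(0,r)$ a Taylor expansion $w(u,m,\epsilon)=\sum_{n\geq 1}w_n(m,\epsilon)u^n$. The series version of the Laplace transform of order $k'$ recalled just after Definition 1 then yields $W(\tau,m,\epsilon)=\sum_{n\geq 1}w_n(m,\epsilon)\Gamma(n/k')\tau^n$, so that
$$\tau^{m'}W(\tau,m,\epsilon) = \sum_{n\geq 1} w_n(m,\epsilon)\,\Gamma(n/k')\,\tau^{n+m'},$$
which I wish to recognize as the Laplace transform of
$$v(u,m,\epsilon) = \sum_{n\geq 1} w_n(m,\epsilon)\frac{\Gamma(n/k')}{\Gamma((n+m')/k')}u^{n+m'}.$$
The key algebraic step is to rewrite the ratio of Gammas via the Beta identity (\ref{Beta_integral}) with $\alpha = n/k'$, $\beta=m'/k'$, followed by the substitution $s=u^{k'}t$, producing
$$\frac{\Gamma(n/k')}{\Gamma((n+m')/k')}u^{n+m'} = \frac{u^{k'}}{\Gamma(m'/k')}\int_0^{u^{k'}}(u^{k'}-s)^{m'/k'-1}\,s^{n/k'-1}\,ds.$$
Resumming against $w_n(m,\epsilon)$ and using $\sum_{n\geq 1}w_n(m,\epsilon)\,s^{n/k'-1} = w(s^{1/k'},m,\epsilon)/s$, I recover precisely the integral expression for $v$ stated in the lemma.

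The main obstacle is justifying two interchanges of sum and integral and extending the equality beyond $D(0,r)$. The Beta-integral step is harmless, being handled by absolute convergence on compact subsets of $D(0,r)$. For identification with an actual Laplace transform of order $k'$ on all of $S_d$, one must check that $v(u,m,\epsilon)$ itself has exponential growth of order at most $k_1$ along $L_\gamma$; this follows by splitting the inner $s$-integral at $|u|^{k'}/2$ and applying the $F^d_{(\nu_2,\beta,\mu,k_1)}$-bound on $w$, keeping in mind that $m'/k'>0$ makes the kernel $(u^{k'}-s)^{m'/k'-1}$ integrable at $s=u^{k'}$ and that the factor $s^{-1}w(s^{1/k'},m,\epsilon) = O(s^{1/k'-1})$ is integrable at $s=0$. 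Once both sides are defined and holomorphic on $S_d$, their agreement on the subdisc of convergence of the Taylor series propagates to the whole sector by the identity principle.
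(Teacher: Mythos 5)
Part 1 of your argument is correct and is exactly what the paper does (``mere derivation under the integral symbol''); the domination argument you give for differentiating under the integral is sound since $k_1<k'$.

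Part 2 contains a genuine gap. You write $W(\tau,m,\epsilon)=\sum_{n\geq 1}w_n(m,\epsilon)\Gamma(n/k')\tau^n$ as an \emph{equality} and later invoke ``agreement on the subdisc of convergence of the Taylor series'' plus the identity principle. But Definition 1 only asserts that this series is the \emph{Gevrey asymptotic expansion of order $1/k'$} of $W$ on a sector; the equality as convergent series holds only when $w$ is entire with the stated growth, which is not the case here ($w$ is holomorphic merely on $U_d\cup D(0,r)$). By Cauchy's estimates $|w_n|\lesssim r^{-n}$, so the coefficients $w_n\Gamma(n/k')$ grow factorially and the series generically diverges for every $\tau\neq 0$: there is no disc in the $\tau$-plane on which both sides coincide with it, and $W$ is in any case only defined on a sectorial neighbourhood of $0$. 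Moreover, two functions holomorphic on a sector sharing the same asymptotic expansion can differ by an exponentially flat term, so matching expansions does not yield equality. Your term-by-term identity $\mathcal{L}_{k'}(u^{n+m'}\Gamma(n/k')/\Gamma((n+m')/k'))=\Gamma(n/k')\tau^{n+m'}$ and the Beta-integral manipulation are correct, but the interchange of $\sum_n$ with the Laplace integral over the \emph{unbounded} ray $L_\gamma$ is precisely what cannot be justified, since the Taylor series of $w$ converges only on $D(0,r)$.

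The paper defers part 2 to Lemma 2 of \cite{lama3}, where the identity is obtained by a direct computation on the ray: substitute $U=u^{k'}$ so that the order-$k'$ Laplace transform becomes an order-$1$ transform in $U$, recognize the inner integral as the Riemann--Liouville convolution $\frac{1}{\Gamma(m'/k')}\int_0^U(U-s)^{m'/k'-1}\tilde w(s)\,ds/s$, apply Fubini (legitimate thanks to the growth bound on $w$ of order $k_1<k'$, which is the part of your write-up that is genuinely needed), and evaluate $\int_0^{\infty}V^{m'/k'-1}e^{-V/\tau^{k'}}dV=\Gamma(m'/k')\,\tau^{m'}$. This produces $\tau^{m'}W(\tau,m,\epsilon)$ exactly, with no series manipulation. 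Alternatively, your scheme could be repaired by replacing the ``identity principle'' step with the uniqueness of a function admitting a prescribed Gevrey-$1/k'$ asymptotic expansion on a sector of aperture larger than $\pi/k'$ (Watson's lemma / injectivity of the $k'$-sum), which both $\tau^{m'}W$ and $\mathcal{L}^d_{k'}(v)$ can be shown to possess; but as written the conclusion does not follow.
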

\begin{proof} The first formula follows by mere derivation under the integral symbol and the proof of the second identity is similar to the one given in Lemma 2 of \cite{lama3} and will not be reproduced here.
 \end{proof}

We propose to display another related problem satisfied by the expression $w(\tau,m,\epsilon)$. We first need to recast the
equation (\ref{main_diff_conv_W}) in a well prepared form. For that purpose, the next lemma will be essential.

\begin{lemma} For all integers $l \geq 1$, there exist positive integers $a_{q,l} \geq 1$, $1 \leq q \leq l$ such that
\begin{equation}
(t\partial_{t})^{l} = \sum_{q=1}^{l} a_{q,l} t^{q}\partial_{t}^{q}
\end{equation}
\end{lemma}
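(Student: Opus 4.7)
\medskip
\noindent\textbf{Proof proposal for Lemma 4.} The plan is a straightforward induction on $l \geq 1$, together with a one-step commutation identity for how $t\partial_{t}$ acts on a monomial differential operator of the form $t^{q}\partial_{t}^{q}$.

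\medskip
\noindent\emph{Base case.} For $l=1$ the claim is trivial with $a_{1,1}=1$.

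\medskip
\noindent\emph{Key commutation identity.} The computation that drives the induction is
$$
 t\partial_{t}\bigl(t^{q}\partial_{t}^{q}\bigr) \;=\; t\bigl(q\, t^{q-1}\partial_{t}^{q} + t^{q}\partial_{t}^{q+1}\bigr) \;=\; q\, t^{q}\partial_{t}^{q} + t^{q+1}\partial_{t}^{q+1},
$$
which I would verify by applying both sides to a test function (the Leibniz rule on $t^{q}$).

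\medskip
\noindent\emph{Inductive step.} Assuming the identity holds at level $l$, I apply $t\partial_{t}$ to both sides and substitute the commutation identity, obtaining
$$
 (t\partial_{t})^{l+1} \;=\; \sum_{q=1}^{l} a_{q,l}\bigl(q\, t^{q}\partial_{t}^{q} + t^{q+1}\partial_{t}^{q+1}\bigr) \;=\; \sum_{q=1}^{l} q\, a_{q,l}\, t^{q}\partial_{t}^{q} + \sum_{q=2}^{l+1} a_{q-1,l}\, t^{q}\partial_{t}^{q}.
$$
Reading off the coefficients yields the Pascal-like recursion
$$
 a_{1,l+1}=a_{1,l},\qquad a_{q,l+1}=q\,a_{q,l}+a_{q-1,l}\ (2\leq q\leq l),\qquad a_{l+1,l+1}=a_{l,l}.
$$
Since $a_{1,1}=1$, every $a_{q,l+1}$ is expressed as a sum/product of strictly positive integers, so positivity and integrality propagate.

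\medskip
\noindent There is no real obstacle here; the only point that needs care is the index shift in the second sum and the verification that the extreme coefficients $a_{1,l+1}$ and $a_{l+1,l+1}$ remain positive (they inherit positivity directly from the previous level). The coefficients $a_{q,l}$ obtained are the Stirling numbers of the second kind, but the proof uses nothing beyond the recursion above.
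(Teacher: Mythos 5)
Your proof is correct and follows essentially the same route as the paper: induction on $l$ via the commutation identity $t\partial_{t}(t^{q}\partial_{t}^{q}) = q\,t^{q}\partial_{t}^{q} + t^{q+1}\partial_{t}^{q+1}$, leading to the recursion $a_{q,l+1}=q\,a_{q,l}+a_{q-1,l}$ with $a_{1,l}=a_{l,l}=1$, which is exactly what the paper records. You have simply written out the details the paper leaves implicit.
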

\begin{proof} The above identity is obtained by induction on $l$ and one observes in particular that the sequence
$(a_{q,l})_{1 \leq q \leq l, l \geq 1}$ satisfies the recursion
$$ a_{q,l+1} = qa_{q,l} + a_{q-1,l} $$
for all $2 \leq q \leq l$ provided that $l \geq 2$ and that $a_{1,l}=a_{l,l}=1$ for all $l \geq 1$.
\end{proof}
As a result, Equation (\ref{main_diff_conv_W}) can be rephrased in the form
\begin{multline}
Q(im)W(\tau,m,\epsilon) = R_{D}(im) k^{\delta_{D}} \sum_{q=1}^{m_{D}} a_{q,m_{D}} \tau^{k \delta_{D} + q} \partial_{\tau}^{q}W(\tau,m,\epsilon)
\\+ \sum_{{\bf l} = (l_{1},l_{2}) \in I} \epsilon^{\Delta_{{\bf l}} -kl_{1}}
\frac{1}{(2\pi)^{1/2}} \int_{-\infty}^{+\infty} C_{{\bf l}}(m-m_{1},\epsilon) k^{l_1} R_{{\bf l}}(im_{1})
\sum_{q=1}^{l_2} a_{q,l_{2}} \tau^{kl_{1}+q} \partial_{\tau}^{q}W(\tau,m_{1},\epsilon) dm_{1} \\
+ \Psi_{d}(\tau,m,\epsilon) \label{main_diff_conv_1}
\end{multline}
We further need to expand the above expression in order to be able to apply the lemma 2 and deduce some integral problem fulfilled by
$w$. The next crucial lemma restates the formula (8.7) p. 3630 from \cite{taya}.
\begin{lemma} Let $k',\delta \geq 1$ be integers. Then, there exit real numbers $A_{\delta,p}$, $1 \leq p \leq \delta-1$ such that
\begin{equation}
\tau^{\delta(k'+1)} \partial_{\tau}^{\delta} = (\tau^{k'+1}\partial_{\tau})^{\delta}
+ \sum_{1 \leq p \leq \delta - 1} A_{\delta,p} \tau^{k'(\delta-p)} (\tau^{k'+1}\partial_{\tau})^{p} \label{Tahara_formula}
\end{equation}
By convention, we take for granted that the above sum $\sum_{1 \leq p \leq \delta-1} [..]$ vanishes when $\delta=1$.
\end{lemma}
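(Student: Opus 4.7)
The plan is to establish the identity by induction on $\delta$, using two elementary tools: the Leibniz rule for $\partial_\tau$, and the commutation relation between $D := \tau^{k'+1}\partial_\tau$ and multiplication by a power of $\tau$.

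The base case $\delta = 1$ holds trivially, since the sum over $1 \le p \le \delta - 1$ is empty by convention, and both sides equal $\tau^{k'+1}\partial_\tau = D$. For the inductive step, I would first derive the recursion
$$\tau^{(\delta+1)(k'+1)}\partial_\tau^{\delta+1} = D \circ \bigl(\tau^{\delta(k'+1)}\partial_\tau^\delta\bigr) - \delta(k'+1)\,\tau^{k'}\bigl(\tau^{\delta(k'+1)}\partial_\tau^\delta\bigr),$$
which follows directly by applying $D = \tau^{k'+1}\partial_\tau$ to a product and solving for the top-order term. Plugging the induction hypothesis for $\tau^{\delta(k'+1)}\partial_\tau^\delta$ into both occurrences on the right turns the problem into reshuffling operators of the form $\tau^a D^p$.

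The key ingredient for this reshuffling is the commutation identity
$$D \circ \tau^a = a\,\tau^{k'+a} + \tau^a \circ D,$$
obtained from one application of the Leibniz rule. Using this with $a = k'(\delta-p)$, one gets
$$D\bigl(\tau^{k'(\delta-p)}D^p\bigr) = k'(\delta-p)\,\tau^{k'(\delta-p+1)}D^p + \tau^{k'(\delta-p)}D^{p+1},$$
so every term produced by $D \circ (\,\cdot\,)$ and by multiplication by $\tau^{k'}$ lands, after an index shift $p \mapsto p-1$ in the $D^{p+1}$ summand, in the target shape $\tau^{k'((\delta+1)-p)}D^p$ with $1 \le p \le \delta$. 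Collecting coefficients then yields an explicit recursion
$$A_{\delta+1,p} = A_{\delta,p-1} + \bigl(k'(\delta-p) - \delta(k'+1)\bigr)A_{\delta,p}$$
for $2 \le p \le \delta-1$, together with the boundary values $A_{\delta+1,1} = \bigl(k'(\delta-1) - \delta(k'+1)\bigr)A_{\delta,1}$ and $A_{\delta+1,\delta} = A_{\delta,\delta-1} - \delta(k'+1)$, with the convention $A_{\delta,0}$ absent. The coefficient of $D^{\delta+1}$ is $1$, as required.

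The calculation is essentially algorithmic, so I do not foresee a genuine obstacle; the only point demanding care is bookkeeping the exponents of $\tau$ after the two contributions $D \circ (\tau^{k'(\delta-p)}D^p)$ and $\tau^{k'} \cdot \tau^{k'(\delta-p)}D^p$ are combined and re-indexed, to verify that each surviving term is indeed of the form $\tau^{k'((\delta+1)-p)}D^p$ with $1 \le p \le \delta$, i.e.\ that no stray power of $\tau$ remains outside the prescribed pattern. Once this is checked, the induction closes and the existence of real numbers $A_{\delta,p}$ is established.
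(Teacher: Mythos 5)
Your proposed induction is correct. The base case is immediate, the recursion
$\tau^{(\delta+1)(k'+1)}\partial_\tau^{\delta+1} = D\circ\bigl(\tau^{\delta(k'+1)}\partial_\tau^{\delta}\bigr) - \delta(k'+1)\,\tau^{k'}\,\tau^{\delta(k'+1)}\partial_\tau^{\delta}$ is a one-line consequence of the Leibniz rule, and the commutation identity $D\circ\tau^{a} = a\,\tau^{k'+a} + \tau^{a}\circ D$ does send every term into the required pattern: $D(\tau^{k'(\delta-p)}D^{p})$ produces $\tau^{k'((\delta+1)-p)}D^{p}$ and $\tau^{k'((\delta+1)-(p+1))}D^{p+1}$, while the correction term produces $\tau^{k'((\delta+1)-p)}D^{p}$, so no stray exponent survives; the coefficient recursion you write down (with $A_{\delta,\delta}=1$ propagating to $A_{\delta+1,\delta+1}=1$) closes the induction and even shows the $A_{\delta,p}$ are integers. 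The point worth noting is that the paper does not prove this lemma at all: it is stated as a restatement of formula (8.7), p.~3630, of Tahara and Yamazawa \cite{taya}, and the burden of proof is deferred to that reference. Your argument therefore supplies a self-contained, elementary derivation where the paper offers only a citation; what the citation buys is brevity and a pointer to the source of the identity, while your induction buys a verifiable proof and an explicit recursion for the coefficients $A_{\delta,p}$.
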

Owing to our hypothesis (\ref{rel_k_deltaD_mD_k_prime}), we can rewrite
\begin{equation}
k\delta_{D} + m_{D} = m_{D}(1+k') \label{expand_k_deltaD_mD} 
\end{equation}
which implies also the next expansion
\begin{equation}
k\delta_{D} + q = q(1 + k') + d_{q,k,D} \label{expand_k_deltaD_q}
\end{equation}
where $d_{q,k,D} = k\delta_{D} - qk'=(m_{D}-q)k' \geq 1$, whenever $1 \leq q \leq m_{D}-1$. Besides, according to our assumption
(\ref{rel_kl_one_l_two_k_prime}) on the set $I$, we can represent the next integers
\begin{equation}
kl_{1} + q = q(1 + k') + e_{q,k,l_{1}} \label{expand_k_l1_q}
\end{equation}
in a specific way where $e_{q,k,l_{1}} = kl_{1} - qk' \geq 1$ for all $(l_{1},l_{2}) \in I$ and $1 \leq q \leq l_{2}$.

Owing to these expansions (\ref{expand_k_deltaD_mD}), (\ref{expand_k_deltaD_q}) and (\ref{expand_k_l1_q}), the lemma 4 allows us to
expand each piece of the equation (\ref{main_diff_conv_1}) in a final prepared form, namely

\begin{multline}
\tau^{k \delta_{D} + m_{D}} \partial_{\tau}^{m_D} W(\tau,m,\epsilon) =
\tau^{m_{D}(1+k')} \partial_{\tau}^{m_D} W(\tau,m,\epsilon) \\
=
\left( (\tau^{k'+1}\partial_{\tau})^{m_D} + \sum_{1 \leq p \leq m_{D}-1} A_{m_{D},p} \tau^{k'(m_{D}-p)} (\tau^{k'+1}\partial_{\tau})^{p} \right)
W(\tau,m,\epsilon) \label{Tahara_form_1}
\end{multline}
together with
\begin{multline}
\tau^{k \delta_{D} + q} \partial_{\tau}^{q} W(\tau,m,\epsilon) =
\tau^{d_{q,k,D}} \tau^{q(1+k')} \partial_{\tau}^{q}W(\tau,m,\epsilon) \\
=
\tau^{d_{q,k,D}} \left( (\tau^{k'+1}\partial_{\tau})^{q} + \sum_{1 \leq p \leq q-1} A_{q,p} \tau^{k'(q-p)}
(\tau^{k'+1}\partial_{\tau})^{p} \right)
W(\tau,m,\epsilon) \label{Tahara_form_2}
\end{multline}
for $1 \leq q \leq m_{D}-1$ and 
\begin{multline}
\tau^{kl_{1}+q}\partial_{\tau}^{q}W(\tau,m_{1},\epsilon) = \tau^{e_{q,k,l_{1}}} \tau^{q(1+k')} \partial_{\tau}^{q}W(\tau,m_{1},\epsilon)
\\
= \tau^{e_{q,k,l_{1}}} \left( (\tau^{k'+1}\partial_{\tau})^{q} + \sum_{1 \leq p \leq q-1} A_{q,p}
\tau^{k'(q-p)} (\tau^{k'+1}\partial_{\tau})^{p} \right)W(\tau,m_{1},\epsilon) \label{Tahara_form_3}
\end{multline}
for $1 \leq q \leq l_{2}$ when $(l_{1},l_{2}) \in I$.
 
Henceforth, we can rework the equation (\ref{main_diff_conv_1}) in its final suitable form for further computations. Namely,
\begin{multline}
Q(im)W(\tau,m,\epsilon) \\
= R_{D}(im) k^{\delta_D} a_{m_{D},m_{D}} \left( (\tau^{k'+1}\partial_{\tau})^{m_D} +
\sum_{1 \leq p \leq m_{D}-1} A_{m_{D},p} \tau^{k'(m_{D}-p)} (\tau^{k'+1}\partial_{\tau})^{p} \right)W(\tau,m,\epsilon)\\
+ R_{D}(im)k^{\delta_D} \sum_{q=1}^{m_{D}-1} a_{q,m_{D}} \tau^{d_{q,k,D}} \left( (\tau^{k'+1}\partial_{\tau})^{q}
+ \sum_{1 \leq p \leq q-1} A_{q,p} \tau^{k'(q-p)} (\tau^{k'+1}\partial_{\tau})^{p} \right)W(\tau,m,\epsilon)\\
+ \sum_{{\bf l} = (l_{1},l_{2}) \in I} \epsilon^{\Delta_{{\bf l}} - kl_{1}} \frac{1}{(2\pi)^{1/2}}
\int_{-\infty}^{+\infty} C_{{\bf l}}(m-m_{1},\epsilon) k^{l_1} R_{{\bf l}}(im_{1}) \\
\times \sum_{q=1}^{l_2} a_{q,l_{2}}
\tau^{e_{q,k,l_{1}}} \left( (\tau^{k'+1}\partial_{\tau})^{q} + \sum_{1 \leq p \leq q-1} A_{q,p} \tau^{k'(q-p)} (\tau^{k'+1}\partial_{\tau})^{p}
\right) W(\tau,m_{1},\epsilon) dm_{1} + \Psi_{d}(\tau,m,\epsilon) \label{main_diff_conv_prep_form}
\end{multline}
Owing to Lemma 2, we are now ready to state the main integral equation that shall fulfill the expression $w(u,m,\epsilon)$ provided that
$W(\tau,m,\epsilon)$ solves the integro-differential equation presented earlier (\ref{main_diff_conv_W})
\begin{multline}
Q(im)w(u,m,\epsilon) = R_{D}(im)k^{\delta_D}(k'u^{k'})^{m_D} w(u,m,\epsilon)
\\
+ \sum_{1 \leq p \leq m_{D}-1} R_{D}(im) k^{\delta_D} A_{m_{D},p}
\frac{u^{k'}}{\Gamma( \frac{k'(m_{D}-p)}{k'} )} \int_{0}^{u^{k'}} (u^{k'}-s)^{\frac{k'(m_{D}-p)}{k'} - 1}
(k')^{p} s^{p} w(s^{1/k'},m,\epsilon) \frac{ds}{s}\\
+ R_{D}(im) k^{\delta_D} \sum_{q=1}^{m_{D}-1} a_{q,m_{D}} \left( \frac{u^{k'}}{\Gamma(\frac{d_{q,k,D}}{k'})}
\int_{0}^{u^{k'}} (u^{k'} - s)^{\frac{d_{q,k,D}}{k'} - 1} (k')^{q} s^{q} w(s^{1/k'},m,\epsilon) \frac{ds}{s} \right.
\\
\left. + \sum_{1 \leq p \leq q-1} A_{q,p} \frac{u^{k'}}{\Gamma( \frac{d_{q,k,D} + k'(q-p)}{k'} )}
\int_{0}^{u^{k'}} (u^{k'} - s)^{\frac{d_{q,k,D} + k'(q-p)}{k'} - 1} (k')^{p} s^{p} w(s^{1/k'},m,\epsilon) \frac{ds}{s} \right)\\
+ \sum_{{\bf l} = (l_{1},l_{2}) \in I} \epsilon^{\Delta_{{\bf l}} - kl_{1}} \frac{1}{(2\pi)^{1/2}}
\int_{-\infty}^{+\infty} C_{{\bf l}}(m-m_{1},\epsilon) k^{l_1} R_{{\bf l}}(im_{1}) \\
\times \sum_{q=1}^{l_2} a_{q,l_{2}}
\left( \frac{u^{k'}}{\Gamma(\frac{e_{q,k,l_{1}}}{k'})} \int_{0}^{u^{k'}} (u^{k'} - s)^{\frac{e_{q,k,l_{1}}}{k'}-1}
(k')^{q} s^{q} w(s^{1/k'},m_{1},\epsilon) \frac{ds}{s} \right.\\
\left. + \sum_{1 \leq p \leq q-1} A_{q,p} \frac{u^{k'}}{\Gamma( \frac{e_{q,k,l_{1}} + k'(q-p)}{k'} )}
\int_{0}^{u^{k'}} (u^{k'} - s)^{\frac{e_{q,k,l_{1}} + k'(q-p)}{k'} - 1} (k')^{p} s^{p} w(s^{1/k'},m_{1},\epsilon) \frac{ds}{s} \right) dm_{1}\\
+ \psi(u,m,\epsilon) \label{main_conv_w}
\end{multline}
 
\section{Construction of solutions to an accessory integral equation relying in a complex parameter}

The main goal of this section is the manufacturing of a unique solution of the latter equation (\ref{main_conv_w}) for vanishing initial data
within the Banach spaces presented in Definition 5.

The next two propositions analyze the continuity of linear convolutions operators acting on the prior Banach spaces.

\begin{prop} Let $k' \geq 1$ be an integer and $\gamma_{1} > 0$, $\gamma_{2},\gamma_{3}$ be real numbers submitted to the next
assumption
\begin{multline}
\gamma_{2}+1 > 0 \ \ , \ \ \gamma_{3} + \frac{1}{k'} + 1 > 0 \ \ , \ \ \gamma_{2}+\gamma_{3}+2 \geq 0 \ \ , \ \
\gamma_{1} \geq k'(\gamma_{2}+\gamma_{3}+2) - k_{1}(\gamma_{2}+1) \label{cond_gamma23_k_prime} 
\end{multline}
We consider a function $(u,m) \mapsto f(u,m)$ that belongs to $F_{(\nu_{2},\beta,\mu,k_{1})}^{d}$ and a continuous function
$a_{\gamma_{1}}(u,m)$ on $(\bar{U}_{d} \cup \bar{D}(0,r)) \times \mathbb{R}$, holomorphic w.r.t $u$ on
$U_{d} \cup D(0,r)$ with the bounds
\begin{equation}
|a_{\gamma_{1}}(u,m)| \leq \frac{1}{(1 + |u|)^{\gamma_{1}}} \label{bds_a_gamma1k} 
\end{equation}
for all $u \in \bar{U}_{d} \cup \bar{D}(0,r)$, all $m \in \mathbb{R}$.

We set the next convolution operator
\begin{equation}
\mathcal{C}_{k',\gamma_{1},\gamma_{2},\gamma_{3}}(f)(u,m) = a_{\gamma_{1}}(u,m)u^{k'}\int_{0}^{u^{k'}}
(u^{k'} - s)^{\gamma_{2}} s^{\gamma_{3}} f(s^{\frac{1}{k'}},m) ds
\end{equation}
Then, the linear map $f \mapsto \mathcal{C}_{k',\gamma_{1},\gamma_{2},\gamma_{3}}(f)$ is continuous from the Banach space
$F_{(\nu_{2},\beta,\mu,k_{1})}^{d}$ into itself. In other words, a constant $C_{1}>0$ (depending on
$\nu_{2},\gamma_{1},\gamma_{2},\gamma_{3},k',k_{1},r$) can be chosen with
\begin{equation}
|| \mathcal{C}_{k',\gamma_{1},\gamma_{2},\gamma_{3}}(f)(u,m) ||_{(\nu_{2},\beta,\mu,k_{1})} \leq C_{1}
||f(u,m)||_{(\nu_{2},\beta,\mu,k_{1})} \label{continuity_conv_C_kprime_gamma} 
\end{equation}
for all $f \in F_{(\nu_{2},\beta,\mu,k_{1})}^{d}$.
\end{prop}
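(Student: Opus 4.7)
\medskip\noindent\textbf{Proof plan.}
The first move is a change of variable that eliminates the endpoint $u^{k'}$ and pulls out all the $|u|$-dependence. Along the straight segment parameterized by $s=u^{k'}t$, $t\in[0,1]$, the integral becomes
$$
\mathcal{C}_{k',\gamma_{1},\gamma_{2},\gamma_{3}}(f)(u,m)=a_{\gamma_{1}}(u,m)\,u^{k'(\gamma_{2}+\gamma_{3}+2)}\int_{0}^{1}(1-t)^{\gamma_{2}}t^{\gamma_{3}}f(u\,t^{1/k'},m)\,dt,
$$
where the branch of $s^{1/k'}$ is chosen compatibly with $u$ so that $ut^{1/k'}$ stays in $U_{d}\cup D(0,r)$ whenever $u$ does (this uses that $U_{d}$ is a sector based at $0$ and that $|ut^{1/k'}|\leq|u|$). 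Using the bound $(\ref{bds_a_gamma1k})$ on $a_{\gamma_{1}}$ and the defining estimate of the norm $||f||_{(\nu_{2},\beta,\mu,k_{1})}$ applied to $f(ut^{1/k'},m)$, a direct computation gives
$$
(1+|m|)^{\mu}e^{\beta|m|}\,\frac{e^{-\nu_{2}|u|^{k_{1}}}}{|u|}\,|\mathcal{C}_{k',\gamma_{1},\gamma_{2},\gamma_{3}}(f)(u,m)|\leq ||f||_{(\nu_{2},\beta,\mu,k_{1})}\;\frac{|u|^{k'(\gamma_{2}+\gamma_{3}+2)}}{(1+|u|)^{\gamma_{1}}}\,\mathcal{I}(|u|),
$$
with $\mathcal{I}(x)=\int_{0}^{1}(1-t)^{\gamma_{2}}t^{\gamma_{3}+1/k'}e^{\nu_{2}x^{k_{1}}(t^{k_{1}/k'}-1)}dt$. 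Conditions $\gamma_{2}+1>0$ and $\gamma_{3}+1/k'+1>0$ ensure $\mathcal{I}$ is well-defined.

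The central technical step is a uniform estimate of the form
$$
\mathcal{I}(x)\leq \frac{C_{0}}{(1+x)^{k_{1}(\gamma_{2}+1)}}\qquad (x\geq 0),
$$
for a constant $C_{0}$ depending only on $\gamma_{2},\gamma_{3},k',k_{1},\nu_{2}$. For $x$ in a bounded range I would control $\mathcal{I}(x)$ by $e^{0}\cdot B(\gamma_{2}+1,\gamma_{3}+1/k'+1)$. For large $x$ the integrand concentrates near $t=1$, and I would use the elementary inequality $1-t^{k_{1}/k'}\geq \tfrac{k_{1}}{k'}(1-t)$ for $t\in[0,1]$ (since $k_{1}\leq k'$; this is where the relation $\kappa_{1}>1/2$ with $1/\kappa_{1}=1/k_{1}-1/k'$ implicitly matters) to replace $t^{k_{1}/k'}-1$ by a linear quantity in the exponent. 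Then the substitution $\sigma=\nu_{2}x^{k_{1}}(k_{1}/k')(1-t)$ converts the integral into an incomplete Gamma integral times $x^{-k_{1}(\gamma_{2}+1)}$, yielding the asserted decay.

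Combining the two estimates, the desired continuity reduces to checking that
$$
\sup_{u\in\bar{U}_{d}\cup\bar{D}(0,r)}\frac{|u|^{k'(\gamma_{2}+\gamma_{3}+2)}}{(1+|u|)^{\gamma_{1}+k_{1}(\gamma_{2}+1)}}<+\infty.
$$
The hypothesis $\gamma_{2}+\gamma_{3}+2\geq 0$ makes the numerator continuous at $u=0$, and the hypothesis $\gamma_{1}\geq k'(\gamma_{2}+\gamma_{3}+2)-k_{1}(\gamma_{2}+1)$ is precisely what is needed at infinity. Packaging the resulting constant gives $(\ref{continuity_conv_C_kprime_gamma})$.

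I expect the main obstacle to be the uniform bound on $\mathcal{I}(x)$. A too-crude use of $e^{\nu_{2}x^{k_{1}}(t^{k_{1}/k'}-1)}\leq 1$ only yields $\gamma_{1}\geq k'(\gamma_{2}+\gamma_{3}+2)$, which is strictly stronger than the stated hypothesis; to recover the full strength one must extract the $(1+x)^{-k_{1}(\gamma_{2}+1)}$ saving from the exponential near $t=1$. Everything else is careful bookkeeping in the weighted sup norm.
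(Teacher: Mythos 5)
Your argument is correct, and it reaches the key estimate by a genuinely different route from the paper. Both proofs start from the same parametrization $s=u^{k'}p$, and both hinge on extracting the extra decay $|u|^{-k_{1}(\gamma_{2}+1)}$ from the exponential weight so that the hypothesis $\gamma_{1}\geq k'(\gamma_{2}+\gamma_{3}+2)-k_{1}(\gamma_{2}+1)$ (rather than the cruder $\gamma_{1}\geq k'(\gamma_{2}+\gamma_{3}+2)$) suffices --- you correctly identify this as the crux. The paper gets this saving by splitting into $u\in D(0,r)$ and $u\in U_{d}$, $|u|>r/2$, and, on the sector, expanding $\exp(\nu_{2}h^{k_{1}/k'})$ as a power series inside the convolution $B(x)$, integrating term by term with the Beta formula, recognizing a Wiman function $E_{\alpha,\beta}$, and invoking its asymptotic bound (\ref{bds_E_alpha_beta}) to obtain (\ref{bds_Bx}); the factor $x^{-\frac{k_{1}}{k'}(\gamma_{2}+1)}$ there is exactly your $(1+x)^{-k_{1}(\gamma_{2}+1)}$ after substituting $x=|u|^{k'}$. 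You instead keep the normalized integral $\mathcal{I}(x)$ and extract the same power saving from the elementary inequality $1-t^{k_{1}/k'}\geq\frac{k_{1}}{k'}(1-t)$ (valid since $k_{1}<k'$) followed by a Laplace-type estimate near $t=1$; this avoids the special-function machinery entirely, gives a bound uniform in $|u|$ so no near/far case split is needed, and makes transparent where each of the four hypotheses in (\ref{cond_gamma23_k_prime}) enters. The one point you should write out with care is the incomplete-Gamma step: because of the factor $t^{\gamma_{3}+1/k'}$, which may be singular at $t=0$, you need to split the $t$-integral (say at $t=1/2$) and observe that the contribution from $t\leq 1/2$ is exponentially small in $x^{k_{1}}$, while on $[1/2,1]$ that factor is bounded and the substitution yields $\Gamma(\gamma_{2}+1)\,(c x^{k_{1}})^{-(\gamma_{2}+1)}$. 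With that detail supplied, your proof is complete and somewhat more elementary than the paper's.
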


\begin{proof} The lines of arguments are akin to those appearing in the proof of Proposition 1 of \cite{lama2}. However, we provide
a detailed proof in order to explain fully the conditions imposed in (\ref{cond_gamma23_k_prime}).

First, let $f$ belong to $F_{(\nu_{2},\beta,\mu,k_{1})}^{d}$. We can rewrite $\mathcal{C}_{k',\gamma_{1},\gamma_{2},\gamma_{3}}(f)(u,m)$
using the parametrization $s=u^{k'}p$ for $0 \leq p \leq 1$.
Namely,
\begin{equation}
\mathcal{C}_{k',\gamma_{1},\gamma_{2},\gamma_{3}}(f)(u,m) = a_{\gamma_{1}}(u,m) u^{k'(\gamma_{2}+\gamma_{3}+2)}
\int_{0}^{1}(1 -p)^{\gamma_{2}}p^{\gamma_{3}} f(u p^{1/k'},m) dp \label{conv_C_k_gamma_param_repres}
\end{equation}
for all $u \in U_{d} \cup D(0,r)$, whenever $m \in \mathbb{R}$. Under the third constraint in (\ref{cond_gamma23_k_prime}), according to
the claim that $a_{\gamma_{1}}(u,m)$ is holomorphic on $U_{d} \cup D(0,r)$ w.r.t $u$, continuous on the adherence
$(\bar{U}_{d} \cup \bar{D}(0,r)) \times \mathbb{R}$ and the fact that $f(u,m)$ is holomorphic on
$U_{d} \cup D(0,r)$ w.r.t $u$ and continuous relatively to $m$ on
$\mathbb{R}$, the map $(u,m) \mapsto \mathcal{C}_{k',\gamma_{1},\gamma_{2},\gamma_{3}}(f)(u,m)$ inherits the same feature on
$(U_{d} \cup D(0,r)) \times \mathbb{R}$. Furthermore, we can provide local sharp bounds when $u$ stays in the disc $D(0,r)$. Indeed, since
$f$ belongs to $F_{(\nu_{2},\beta,\mu,k_{1})}^{d}$, we observe in particular that the next estimates
\begin{equation}
 |f(u,m)| \leq ||f(u,m)||_{(\nu_{2},\beta,\mu,k_{1})} \exp(\nu_{2}|u|^{k_1}) |u| (1 + |m|)^{-\mu} \exp(-\beta |m|)
\end{equation}
hold for all $u \in D(0,r)$, all $m \in \mathbb{R}$. Consequently, owing to the representation (\ref{conv_C_k_gamma_param_repres}) and keeping
in mind the Beta function formula (\ref{Beta_integral}), it follows
\begin{multline}
|\mathcal{C}_{k',\gamma_{1},\gamma_{2},\gamma_{3}}(f)(u,m)| \leq |a_{\gamma_{1}}(u,m)| |u|^{k'(\gamma_{2}+\gamma_{3}+2)}
|u| \int_{0}^{1} (1 -p)^{\gamma_{2}} p^{\gamma_{3} + \frac{1}{k'}} dp
||f(u,m)||_{(\nu_{2},\beta,\mu,k_{1})}\\
\times \exp( \nu_{2} |u|^{k_1} ) (1 + |m|)^{-\mu} \exp(-\beta |m|) \leq
\frac{\Gamma(\gamma_{2}+1) \Gamma(\gamma_{3} + \frac{1}{k'} + 1)}{\Gamma( \gamma_{2}+\gamma_{3} + \frac{1}{k'} + 2)}\\
\times
\sup_{u \in D(0,r),m \in \mathbb{R}} |a_{\gamma_{1}}(u,m)| r^{k'(\gamma_{2}+\gamma_{3}+2)}
||f(u,m)||_{(\nu_{2},\beta,\mu,k_{1})} |u| \exp( \nu_{2} |u|^{k_1}) (1 + |m|)^{-\mu} \exp(-\beta |m|) \label{conv_C_k_gamma_bds_near_origin}
\end{multline}
for all $u \in D(0,r)$, all $m \in \mathbb{R}$.

In a second step, we focus on the global behaviour of $\mathcal{C}_{k',\gamma_{1},\gamma_{2},\gamma_{3}}(f)$ on the domain
$U_{d} \times \mathbb{R}$. Since $f$ is taken within $F_{(\nu_{2},\beta,\mu,k_{1})}^{d}$, we get especially that
\begin{equation}
|f(u,m)| \leq ||f(u,m)||_{(\nu_{2},\beta,\mu,k_{1})} |u| \exp( \nu_{2} |u|^{k_1}) (1 + |m|)^{-\mu} \exp(- \beta |m|) 
\end{equation}
for all $u \in U_{d}$, all $m \in \mathbb{R}$. As a result, we deduce from the very definition of the convolution operator together with the assumption
(\ref{bds_a_gamma1k}) that
\begin{multline}
|\mathcal{C}_{k',\gamma_{1},\gamma_{2},\gamma_{3}}(f)(u,m)| \leq ||f(u,m)||_{(\nu_{2},\beta,\mu,k_{1})}
\frac{|u|^{k'}}{(1 + |u|)^{\gamma_1}} \\
\times \int_{0}^{|u|^{k'}} (|u|^{k'} - h)^{\gamma_2} h^{\gamma_{3} + \frac{1}{k'}}
\exp( \nu_{2} h^{\frac{k_1}{k'}} ) dh (1 + |m|)^{-\mu} e^{-\beta |m|}
\end{multline}
for all $u \in U_{d}$, all $m \in \mathbb{R}$. We consider the function
$$ B(x) = \int_{0}^{x} (x-h)^{\gamma_{2}} h^{\gamma_{3} + \frac{1}{k'}} \exp( \nu_{2}h^{\frac{k_1}{k'}} ) dh $$
The procedure that will lead to upper estimates for this function is similar to the one performed in the proof of Proposition 1 of
\cite{lama2}. Indeed, according to the uniform expansion
$$ \exp( \nu_{2} h^{\frac{k_1}{k'}} ) = \sum_{n \geq 0} \frac{(\nu_{2}h^{\frac{k_{1}}{k'}})^{n}}{n!} $$
on every compact interval $[0,x]$, $x > 0$, we can write
$$ B(x) = \sum_{n \geq 0} \frac{\nu_{2}^{n}}{n!} \int_{0}^{x} h^{\gamma_{3} + \frac{1}{k'} + n\frac{k_1}{k'}}
(x-h)^{\gamma_2} dh $$
Using the Beta integral formula (\ref{Beta_integral}), we can obtain the identity
\begin{equation}
\int_{0}^{x} (x-h)^{\alpha - 1} h^{\beta-1} dh = x^{\alpha + \beta - 1}\frac{\Gamma(\alpha)\Gamma(\beta)}{\Gamma(\alpha+\beta)}
\label{Beta_integral_convolution}
\end{equation}
which holds for any real number $x > 0$ whenever $\alpha,\beta >0$. Under our assumption (\ref{cond_gamma23_k_prime}),
we deduce that
$$ B(x) = \sum_{n \geq 0} \frac{\nu_{2}^{n}}{n!}
\frac{ \Gamma(\gamma_{2}+1)\Gamma(\gamma_{3} + \frac{1}{k'} + n\frac{k_{1}}{k'} + 1)}{
\Gamma( \gamma_{2} + \gamma_{3} + \frac{1}{k'} + n\frac{k_1}{k'} + 2) }
x^{\gamma_{2}+\gamma_{3} + \frac{1}{k'} + n\frac{k_1}{k'} + 1} $$
for all $x > 0$. Bearing in mind that
\begin{equation}
\Gamma(x)/\Gamma(x+a) \sim 1/x^{a} \label{quotient_Gamma}
\end{equation}
as $x$ tends to $+\infty$, for any real number $a>0$ (see for instance \cite{ba2}, Appendix B3), we deduce a constant
$K_{1.1}>0$ (depending on $\gamma_{2},\gamma_{3},k',k_{1}$) with
$$ B(x) \leq K_{1.1}x^{\gamma_{2} + \gamma_{3} + \frac{1}{k'} + 1}
\sum_{n \geq 0} \frac{1}{(n+1)^{\gamma_{2}+1}n!}(\nu_{2} x^{\frac{k_1}{k'}} )^{n} $$
for all $x>0$. Again, by (\ref{quotient_Gamma}), we check that
$$ \frac{1}{(n+1)^{\gamma_{2}+1}} \sim \frac{\Gamma(n+1)}{\Gamma(n + \gamma_{2} + 2)} $$
as $n$ tends to $+\infty$. Consequently, we get a constant $K_{1.2}>0$ (depending on $\gamma_{2}$) such that
$$ B(x) \leq K_{1.1}K_{1.2} x^{\gamma_{2}+\gamma_{3} + \frac{1}{k'} + 1}
\sum_{n \geq 0} \frac{1}{\Gamma(n + \gamma_{2} + 2)} (\nu_{2}x^{\frac{k_1}{k'}})^{n} $$
for all $x>0$. On the other hand, we remind the asymptotic property of the Wiman function $E_{\alpha,\beta}(z) =
\sum_{n \geq 0} z^{n}/\Gamma(\beta + \alpha n)$, for any $\alpha \in (0,2)$, $\beta>0$ (see \cite{erd}, expansion (22) p.210) which gives
rise to a constant
$C_{\alpha,\beta}>0$ (depending on $\alpha,\beta$) with
\begin{equation}
E_{\alpha,\beta}(z) \leq C_{\alpha,\beta} z^{\frac{1 - \beta}{\alpha}} \exp( z^{1/\alpha} ) \label{bds_E_alpha_beta} 
\end{equation}
for all $z \geq 1$. We deduce the existence of a constant
$K_{1.3}>0$ (depending on $\nu_{2},\gamma_{2}$) such that
\begin{equation}
B(x) \leq K_{1.1}K_{1.2}K_{1.3}x^{\gamma_{2}+\gamma_{3} + \frac{1}{k'} + 1}
x^{-\frac{k_1}{k'}(\gamma_{2}+1)} \exp( \nu_{2} x^{\frac{k_1}{k'}} ) \label{bds_Bx}
\end{equation}
for all $x >r/2$. Subsequently, a constant $K_{1}>0$ (depending on $\nu_{2},\gamma_{2},\gamma_{3},k',k_{1}$) can be chosen with
\begin{multline}
|\mathcal{C}_{k',\gamma_{1},\gamma_{2},\gamma_{3}}(f)(u,m)| \leq ||f(u,m)||_{(\nu_{2},\beta,\mu,k_{1})} K_{1}
\frac{ |u|^{k'(\gamma_{2}+\gamma_{3} + \frac{1}{k'} + 2)} |u|^{-k_{1}(\gamma_{2}+1)} }{(1 + |u|)^{\gamma_{1}}}
\exp( \nu_{2} |u|^{k_1} )\\
\times (1 + |m|)^{-\mu} e^{-\beta |m|} \label{conv_C_k_gamma_bds_on_Ud}
\end{multline}
for all $u \in U_{d}$, $|u| > r/2$, all $m \in \mathbb{R}$. In accordance with the last item of the assumption
(\ref{cond_gamma23_k_prime}), we get a constant $B_{1}$ (depending on $r,\gamma_{1},\gamma_{2},\gamma_{3},k_{1},k'$) with
\begin{equation}
 \sup_{u \in U_{d}, |u|>r/2} \frac{ |u|^{k'(\gamma_{2}+\gamma_{3}+2) - k_{1}(\gamma_{2}+1)} }{(1 + |u|)^{\gamma_1}}
 \leq B_{1} \label{quotient_u_gamma_bds_on_Ud}
\end{equation}
In the final step, we collect the three previous bounds (\ref{conv_C_k_gamma_bds_near_origin}),
(\ref{conv_C_k_gamma_bds_on_Ud}) and (\ref{quotient_u_gamma_bds_on_Ud}) from which we figure out that the map
$(u,m) \mapsto \mathcal{C}_{k',\gamma_{1},\gamma_{2},\gamma_{3}}(f)(u,m)$ belongs to $F_{(\nu_{2},\beta,\mu,k_{1})}^{d}$ with the
anticipated bounds (\ref{continuity_conv_C_kprime_gamma}).
\end{proof}

\begin{prop} Let $Q(X),R(X) \in \mathbb{C}[X]$ be polynomials such that
\begin{equation}
\mathrm{deg}(R) \geq \mathrm{deg}(Q) \ \ , \ \ R(im) \neq 0 \ \ , \ \ \mu > \mathrm{deg}(Q)+1 \label{cond_R_Q_mu}
\end{equation}
Let $(u,m,m_{1}) \mapsto b(u,m,m_{1})$ be a continuous function on $(U_{d} \cup D(0,r)) \times \mathbb{R} \times \mathbb{R}$,
holomorphic w.r.t $u$ on $U_{d} \cup D(0,r)$ fulfilling the bounds
\begin{equation}
\sup_{\stackrel{u \in U_{d} \cup D(0,r)}{m,m_{1} \in \mathbb{R}}} |b(u,m,m_{1})| \leq C_{b} \label{bds_bumm1}
\end{equation}
for some constant $C_{b}>0$. Then, there exists a constant $C_{2}>0$ (depending on $Q$,$R$ and $\mu$) such that
\begin{multline}
||\frac{1}{R(im)} \int_{-\infty}^{+\infty} f(m-m_{1}) Q(im_{1})b(u,m,m_{1})g(u,m_{1}) dm_{1} ||_{(\nu_{2},\beta,\mu,k_{1})}
\\
\leq C_{2}C_{b} ||f(m)||_{(\beta,\mu)} ||g(u,m)||_{(\nu_{2},\beta,\mu,k_{1})}
\end{multline}
whenever $f$ belongs to $E_{(\beta,\mu)}$ and $g$ belongs to $F_{(\nu_{2},\beta,\mu,k_{1})}^{d}$.
\end{prop}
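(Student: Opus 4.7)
The plan is to control the weighted supremum directly by plugging in the definitions of the three relevant norms and reducing everything to a classical convolution inequality in the space variable $m$. Fix $(u,m) \in (U_{d} \cup D(0,r)) \times \mathbb{R}$ and insert the pointwise bounds coming from the definitions of $\|\cdot\|_{(\beta,\mu)}$ and $\|\cdot\|_{(\nu_{2},\beta,\mu,k_{1})}$:
$$|f(m-m_{1})| \leq \|f\|_{(\beta,\mu)} (1+|m-m_{1}|)^{-\mu} e^{-\beta|m-m_{1}|},$$
$$|g(u,m_{1})| \leq \|g\|_{(\nu_{2},\beta,\mu,k_{1})} |u|\, e^{\nu_{2}|u|^{k_{1}}} (1+|m_{1}|)^{-\mu} e^{-\beta|m_{1}|}.$$
The triangle inequality $|m-m_{1}| + |m_{1}| \geq |m|$ lets the two exponential factors combine into $e^{-\beta|m|}$, while the factor $|u|\, e^{\nu_{2}|u|^{k_{1}}}$ matches exactly the weight appearing in the target norm. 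The $u$-variable then plays no further role thanks to the uniform bound $|b(u,m,m_{1})| \leq C_{b}$.

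The heart of the proof is then a purely one-dimensional estimate in $m$. Standard polynomial bounds give
$$|Q(im_{1})| \leq C_{Q}(1+|m_{1}|)^{\deg(Q)}, \qquad |R(im)| \geq C_{R}(1+|m|)^{\deg(R)},$$
the second being valid because $R(im)$ does not vanish on $\mathbb{R}$ and $R(im)/(1+|m|)^{\deg(R)}$ tends to the nonzero leading coefficient as $|m| \to \infty$, hence stays bounded below by a positive constant on all of $\mathbb{R}$. The remaining task is to show that
$$\sup_{m \in \mathbb{R}} (1+|m|)^{\mu - \deg(R)} \int_{-\infty}^{+\infty} (1+|m-m_{1}|)^{-\mu} (1+|m_{1}|)^{\deg(Q) - \mu}\, dm_{1} < +\infty.$$

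The main obstacle, and essentially the only technical ingredient, is the convolution inequality
$$\int_{-\infty}^{+\infty} \frac{dm_{1}}{(1+|m-m_{1}|)^{\mu} (1+|m_{1}|)^{\mu - \deg(Q)}} \leq \frac{K}{(1+|m|)^{\mu - \deg(Q)}},$$
which holds precisely under the assumption $\mu > \deg(Q) + 1$ (so that both exponents exceed $1$ and their sum exceeds $\mu$). This is a standard estimate of the very same type used in the proof of Proposition 1 of \cite{lama2}, so I would either cite it or reproduce the short elementary argument splitting the integral over $|m_{1}| \leq |m|/2$ and $|m_{1}| > |m|/2$. Once this is in hand, the remaining prefactor becomes $(1+|m|)^{\mu-\deg(R)} \cdot (1+|m|)^{-(\mu-\deg(Q))} = (1+|m|)^{\deg(Q)-\deg(R)}$, which is bounded by $1$ thanks to the hypothesis $\deg(R) \geq \deg(Q)$.

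Assembling the pieces yields a constant $C_{2} > 0$ depending only on $Q$, $R$, $\mu$ (and implicitly on the lower bound for $R(im)$) such that the desired norm bound holds, with $C_{b}$ appearing linearly through the uniform control of $b(u,m,m_{1})$. In particular the estimate is uniform in $u$ over $U_{d} \cup D(0,r)$, so the map $(u,m) \mapsto H(u,m)$ inherits the required membership in $F^{d}_{(\nu_{2},\beta,\mu,k_{1})}$ from the joint holomorphy/continuity properties of $f$, $g$ and $b$ together with a dominated convergence argument applied to the $m_{1}$-integral.
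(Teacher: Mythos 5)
Your proof is correct and follows essentially the same route as the paper's: insert the weighted norm bounds for $f$ and $g$, combine the exponentials via the triangle inequality $|m|\leq |m_1|+|m-m_1|$, use the polynomial upper/lower bounds for $Q(im_1)$ and $R(im)$, and reduce everything to the finiteness of $\sup_{m}(1+|m|)^{\mu-\deg(R)}\int_{-\infty}^{+\infty}(1+|m-m_1|)^{-\mu}(1+|m_1|)^{\deg(Q)-\mu}\,dm_1$. The only cosmetic difference is that the paper disposes of this last supremum by citing Lemma 2.2 of \cite{cota2} (or Lemma 4 of \cite{ma2}), whereas you indicate the elementary splitting argument proving the same convolution estimate.
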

\begin{proof} The proof is closely related to the one of Proposition 3 of \cite{lama2}. Again, we give a thorough explanation of the result.
We take $f$ inside $E_{(\beta,\mu)}$ and select $g$ belonging to $F^{d}_{(\nu_{2},\beta,\mu,k_{1})}$. We first recast the norm of the
convolution operator as follows
\begin{multline}
N_{2} = ||\frac{1}{R(im)} \int_{-\infty}^{+\infty} f(m-m_{1}) Q(im_{1})b(u,m,m_{1})g(u,m_{1}) dm_{1} ||_{(\nu_{2},\beta,\mu,k_{1})}\\
= \sup_{u \in U_{d} \cup D(0,r), m \in \mathbb{R}} (1 + |m|)^{\mu} \frac{1}{|u|} e^{\beta |m|} \exp(-\nu_{2} |u|^{k_1})\\
\times |\frac{1}{R(im)} \int_{-\infty}^{+\infty} \{ (1 + |m-m_{1}|)^{\mu} \exp(\beta|m-m_{1}|) f(m-m_{1}) \} b(u,m,m_{1}) \\
\times
\{ (1 + |m_{1}|)^{\mu} e^{\beta |m_1|} \frac{1}{|u|} \exp( - \nu_{2} |u|^{k_1} ) g(u,m_{1}) \} \mathcal{A}(u,m,m_{1}) dm_{1}| \label{N2}
\end{multline}
where
$$ \mathcal{A}(u,m,m_{1}) = \frac{Q(im_{1}) \exp(-\beta |m_{1}|) \exp(-\beta |m-m_{1}|) }{(1 + |m_{1}|)^{\mu} ( 1 + |m-m_{1}|)^{\mu}}
|u| \exp( \nu_{2} |u|^{k_1}) $$
By construction of the polynomials $Q$ and $R$, one can sort two constants $\mathfrak{Q},\mathfrak{R}>0$ with
\begin{equation}
|Q(im_{1})| \leq \mathfrak{Q}(1 + |m_{1}|)^{\mathrm{deg}(Q)} \ \ , \ \ |R(im)| \geq \mathfrak{R}(1 + |m|)^{\mathrm{deg}(R)} \label{bds_Q_R}
\end{equation}
for all $m,m_{1} \in \mathbb{R}$. As a consequence of (\ref{N2}), (\ref{bds_Q_R}) and (\ref{bds_bumm1}) with the help of the triangular inequality
$|m| \leq |m_{1}| + |m-m_{1}|$, we are led to the
bounds
$$ N_{2} \leq C_{2}C_{b} ||f(m)||_{(\beta,\mu)} ||g(u,m)||_{(\nu_{2},\beta,\mu,k_{1})} $$
where
$$ C_{2} = \frac{\mathfrak{Q}}{\mathfrak{R}} \sup_{m \in \mathbb{R}} (1 + |m|)^{\mu - \mathrm{deg}(R)}
\int_{-\infty}^{+\infty} \frac{1}{(1 + |m-m_{1}|)^{\mu} (1 + |m_{1}|)^{\mu - \mathrm{deg}(Q)} } dm_{1} $$
is a finite constant under the first and last restriction of (\ref{cond_R_Q_mu}) according to the estimates of
Lemma 2.2 from \cite{cota2} or Lemma 4 of \cite{ma2}.
\end{proof}

In the next step, we discuss further analytic assumptions on the leading polynomials $Q(X)$ and $R_{D}(X)$ in order to be able to
transform our problem (\ref{main_conv_w}) into a fixed point equation as described afterwards, see (\ref{fixed_pt_Hepsilon}).

We follow a similar roadmap as in our previous study \cite{lama1}. Namely, we take for granted that one can find a bounded sectorial annulus
$$ S_{Q,R_{D}} = \{ z \in \mathbb{C}^{\ast} / r_{Q,R_{D},1} < |z| < r_{Q,R_{D},2}, |\mathrm{arg}(z) - d_{Q,R_{D}}| \leq \eta_{Q,R_{D}} \} $$
with direction $d_{Q,R_{D}} \in \mathbb{R}$, aperture $\eta_{Q,R_{D}}>0$ for some given inner and outer radius
$0< r_{Q,R_{D},1} < r_{Q,R_{D},2}$ with the inclusion
\begin{equation}
\{ \frac{Q(im)}{R_{D}(im)} / m \in \mathbb{R} \} \subset S_{Q,R_{D}} \label{quotient_Q_RD_in_S}
\end{equation}
In the sequel, we need to factorize explicitely the polynomial
\begin{equation}
P_{m}(u) = Q(im) - R_{D}(im)k^{\delta_{D}} (k')^{m_D} u^{k \delta_{D}} \label{defin_Pm}
\end{equation}
as follows
\begin{equation}
P_{m}(u) = -R_{D}(im) k^{\delta_D} (k')^{m_D} \Pi_{l=0}^{k\delta_{D}-1} (u - q_{l}(m)) \label{factor_P_m}
\end{equation}
where the roots $q_{l}(m)$ are given by
$$ q_{l}(m) = (\frac{|Q(im)|}{|R_{D}(im)k^{\delta_D}(k')^{m_D}})^{\frac{1}{k\delta_{D}}}
\exp\left( \sqrt{-1}( \mathrm{arg}( \frac{Q(im)}{R_{D}(im)k^{\delta_{D}}(k')^{m_D}} ) \frac{1}{k\delta_{D}} +
\frac{2\pi l}{k \delta_{D}} ) \right) $$
for all $0 \leq l \leq \delta_{D}k - 1$, for all $m \in \mathbb{R}$.

We select an unbounded sector $U_{d}$ centered at 0, a small disc $D(0,r)$ and we assign the sector $S_{Q,R_{D}}$ in a way that the next
two conditions hold:\\
1) A constant $M_{1}>0$ can be found such that
\begin{equation}
|u - q_{l}(m)| \geq M_{1}(1 + |u|) \label{dist_Ud_Dr_qlm_M1} 
\end{equation}
for all $0 \leq l \leq k\delta_{D} - 1$, all $m \in \mathbb{R}$, whenever $u \in U_{d} \cup D(0,r)$.\\
2) There exists a constant $M_{2}>0$ with
\begin{equation}
|u - q_{l_0}(m)| \geq M_{2} |q_{l_0}(m)| \label{dist_Ud_Dr_qlm_M2}
\end{equation}
for some $0 \leq l_{0} \leq \delta_{D}k-1$, all $m \in \mathbb{R}$, all $u \in U_{d} \cup D(0,r)$.

In order to examine the first point 1), we observe that under the hypothesis (\ref{quotient_Q_RD_in_S}), the roots $q_{l}(m)$ are bounded
from below and satisfy $|q_{l}(m)| \geq 2r$ for all $m \in \mathbb{R}$, all $0 \leq l \leq \delta_{D}k-1$ for a suitable choice of the radii
$r_{Q,R_{D},1},r >0$. Besides, for all $m \in \mathbb{R}$, all $0 \leq l \leq \delta_{D}k-1$, these roots remain inside an union
$\mathcal{U}$ of unbounded sectors centered at 0 that do not cover a full neighborhood of 0 in $\mathbb{C}^{\ast}$ whenever the aperture
$\eta_{Q,R_{D}}>0$ is taken small enough. Therefore, we may choose a sector $U_{d}$ such that
$$ U_{d} \cap U = \emptyset $$
It has the property that for all $0 \leq l \leq \delta_{D}k-1$, the quotients $q_{l}(m)/u$ lay outside some small disc centered at 1 in
$\mathbb{C}$ for all $u \in U_{d}$, all $m \in \mathbb{R}$. As a consequence, (\ref{dist_Ud_Dr_qlm_M1}) follows.

With the sector $U_{d}$ and disc $D(0,r)$ chosen as above, the second point 2) then proceeds from the fact that for any fixed
$0 \leq l_{0} \leq \delta_{D}k-1$, the quotient $u/q_{l_{0}}(m)$ stays apart a small disc centered at 1 in $\mathbb{C}$ for all
$u \in U_{d} \cup D(0,r)$, all $m \in \mathbb{R}$.

The factorization (\ref{factor_P_m}) along with the lower bounds (\ref{dist_Ud_Dr_qlm_M1}) and (\ref{dist_Ud_Dr_qlm_M2}) provided above, permits
us to find lower bounds for $P_{m}(u)$, namely a constant $C_{P}>0$ (independent of $r_{Q,R_{D},1}$ and $r_{Q,R_{D},2}$) with
\begin{multline}
|P_{m}(u)| \geq M_{1}^{\delta_{D}k-1} M_{2} |R_{D}(im)| k^{\delta_{D}} (k')^{m_D}
( \frac{|Q(im)|}{|R_{D}(im)|k^{\delta_D}(k')^{m_D}} )^{\frac{1}{k\delta_D}} (1 + |u|)^{k\delta_{D} - 1} \\
\geq C_{P}(r_{Q,R_{D},1})^{\frac{1}{k \delta_D}}|R_{D}(im)|(1 + |u|)^{k\delta_{D}-1} \label{low_bds_Pmu}
\end{multline}
for all $u \in U_{d} \cup D(0,r)$, all $m \in \mathbb{R}$.

For later requirement, we already display the next upper bounds. There exists
a constant $C_{P,R_{D}}>0$ (depending on $k,k',\delta_{D},m_{D},Q,R_{D},r_{Q,R_{D},2}$) such that
\begin{equation}
\left| \frac{P_{m_1}(u)R_{D}(im)}{P_{m}(u)R_{D}(im_{1})} \right| \leq C_{P,R_{D}} \label{bds_quotient_Pmu_RD}
\end{equation}
for all $u \in U_{d} \cup D(0,r)$, all $m,m_{1} \in \mathbb{R}$. Indeed, owing to the assumption (\ref{quotient_Q_RD_in_S})
along with (\ref{dist_Ud_Dr_qlm_M1}), the factorization
(\ref{factor_P_m}) yields the lower bounds
$$ |P_{m}(u)| \geq |R_{D}(im)|k^{\delta_D}(k')^{m_D}M_{1}^{k\delta_{D}}(1 + |u|)^{k\delta_{D}} $$
for all $u \in U_{d} \cup D(0,r)$, all $m \in \mathbb{R}$. On the other hand, having a glance again at (\ref{quotient_Q_RD_in_S}),
the triangular inequality allows us to write
$$ \left| \frac{P_{m_1}(u)}{R_{D}(im_{1})} \right| \leq \left| \frac{Q(im_{1})}{R_{D}(im_{1})} \right| +
k^{\delta_D}(k')^{m_D}|u|^{k\delta_{D}} \leq r_{Q,R_{D},2} + k^{\delta_D}(k')^{m_D}|u|^{k\delta_{D}} $$
for all $u \in \mathbb{C}$, all $m_{1} \in \mathbb{R}$.
Therefore, we deduce that
$$ 
\left| \frac{P_{m_1}(u)R_{D}(im)}{P_{m}(u)R_{D}(im_{1})} \right| \leq
\sup_{x \geq 0} \frac{r_{Q,R_{D},2} + k^{\delta_{D}} (k')^{m_D} x^{k\delta_{D}}}{k^{\delta_{D}}(k')^{m_D} M_{1}^{k\delta_{D}}
(1 + x)^{k\delta_{D}} }=C_{P,R_{D}}
$$
whenever $u \in U_{d} \cup D(0,r)$, $m,m_{1} \in \mathbb{R}$.

In the next proposition, we provide sufficient conditions in order to ensure the existence and uniqueness of a solution
$w^{d}(u,m,\epsilon)$ of the main integral equation (\ref{main_conv_w}) that belongs to the Banach space $F_{(\nu_{2},\beta,\mu,k_{1})}^{d}$.
\begin{prop} We take for granted that the next additional requirement
\begin{equation}
k_{1} \geq 1 \ \ , \ \ \mu > \mathrm{deg}(R_{\bf l}) + 1 \ \ , \ \ k\delta_{D} \geq kl_{1}(1 - \frac{k_{1}}{k'}) + k_{1}l_{2} + 1  \label{cond_exist_uniq_wd}
\end{equation}
holds for all ${\bf l}=(l_{1},l_{2}) \in I$. Then, for a proper choice of the radius $r_{Q,R_{D},1}>0$ (see \ref{quotient_Q_RD_in_S})
taken large enough and constants $C_{\bf l}>0$ (see \ref{defin_Cl}) sufficiently small for ${\bf l} \in I$, one can find a
constant $\varpi$ such that the equation
(\ref{main_conv_w}) possesses a unique solution $(u,m) \mapsto w^{d}(u,m,\epsilon)$ in the space $F_{(\nu_{2},\beta,\mu,k_{1})}^{d}$ with
the feature that
\begin{equation}
||w^{d}(u,m,\epsilon)||_{(\nu_{2},\beta,\mu,k_{1})} \leq \varpi \label{norm_wd_varpi} 
\end{equation}
for all $\epsilon \in D(0,\epsilon_{0})$, where
$\nu_{2}=(1/T_{0})^{k_1}$ and $T_{0},\beta,\mu,k_{1}$ are introduced in Section 3 within the construction of the map $\psi(u,m,\epsilon)$, see (\ref{psi_exp_bds}).
\end{prop}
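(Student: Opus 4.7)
The plan is to recast the integral equation (\ref{main_conv_w}) as a fixed-point equation $w = \mathcal{H}_\epsilon(w)$ on the Banach space $F^d_{(\nu_2,\beta,\mu,k_1)}$ and apply the Banach fixed point theorem on a closed ball of radius $\varpi$. First I would move the leading term $R_D(im)k^{\delta_D}(k'u^{k'})^{m_D}\,w$ to the left-hand side; thanks to the relation $k\delta_D = m_D k'$ of (\ref{rel_k_deltaD_mD_k_prime}), the resulting coefficient of $w$ is exactly the polynomial $P_m(u)$ of (\ref{defin_Pm}). Since the sector $U_d$ and disc $D(0,r)$ were selected so that (\ref{low_bds_Pmu}) holds, $P_m(u)$ never vanishes on $(U_d\cup D(0,r))\times\mathbb{R}$, and dividing by $P_m(u)$ presents $\mathcal{H}_\epsilon(w)$ as a finite sum of convolution terms (one for every subscript $(q,p)$ coming from the $R_D$-group and for every $(l_1,l_2)\in I$ with its $(q,p)$) together with the forcing contribution $\psi(u,m,\epsilon)/P_m(u)$.

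Next I would estimate each summand. The forcing term is controlled by combining the bounds (\ref{psi_exp_bds}) (which identify $\psi$ as an element of $F^d_{(\nu_2,\beta,\mu,k_1)}$ with norm of order $K_0$, precisely because $\nu_2 = (1/T_0)^{k_1}$) with the lower bound (\ref{low_bds_Pmu}); the latter extracts a factor $r_{Q,R_D,1}^{-1/(k\delta_D)}$ that can be driven to any prescribed smallness. Each convolution summand has the shape required by Proposition 1 with $a_{\gamma_1}(u,m) = P_m(u)^{-1}$ times bounded coefficients (with the quotient $R_{\bf l}(im_1)/R_D(im)$ absorbed via Proposition 2 whenever an $m$-convolution against $C_{\bf l}$ is present); the estimate (\ref{low_bds_Pmu}) gives (\ref{bds_a_gamma1k}) with $\gamma_1 = k\delta_D - 1$ and a small prefactor $r_{Q,R_D,1}^{-1/(k\delta_D)}$. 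The exponents $(\gamma_2,\gamma_3)$ are read off from (\ref{main_conv_w}): for the $R_D$-terms one gets $\gamma_2+\gamma_3+2 = m_D = k\delta_D/k'$, and the central condition $\gamma_1 \geq k'(\gamma_2+\gamma_3+2) - k_1(\gamma_2+1)$ reduces to $k_1(m_D-p)\geq 1$, which holds because $k_1\geq 1$ from (\ref{cond_exist_uniq_wd}); for the $I$-terms one obtains $\gamma_2+\gamma_3+2 = kl_1/k'$, and the condition becomes $k\delta_D - 1 \geq kl_1(1 - k_1/k') + k_1 q$ for $1 \leq q \leq l_2$, which is precisely the third inequality in (\ref{cond_exist_uniq_wd}). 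The $m$-convolutions are handled by Proposition 2 (with $R=R_D$, $Q=R_{\bf l}$), using the degree condition $\mu > \deg R_{\bf l}+1$ of (\ref{cond_exist_uniq_wd}); the scalar factors $\epsilon^{\Delta_{\bf l}-kl_1}$ stay bounded on $D(0,\epsilon_0)$ by (\ref{rel_Deltal_k}).

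Assembling these estimates produces a Lipschitz bound $\|\mathcal{H}_\epsilon(w_1)-\mathcal{H}_\epsilon(w_2)\|_{(\nu_2,\beta,\mu,k_1)} \leq L\,\|w_1-w_2\|_{(\nu_2,\beta,\mu,k_1)}$ in which every $R_D$-contribution to $L$ carries the tunable factor $r_{Q,R_D,1}^{-1/(k\delta_D)}$ and every $I$-contribution is proportional to a constant $C_{\bf l}$. Taking $r_{Q,R_D,1}$ large and the $C_{\bf l}$'s small, I would arrange $L \leq 1/2$ and, writing $\zeta$ for the absolute bound on $\|\psi/P_m\|_{(\nu_2,\beta,\mu,k_1)}$, fix $\varpi \geq 2\zeta$ so that $\mathcal{H}_\epsilon$ sends the closed ball of radius $\varpi$ into itself while acting as a contraction of ratio at most $1/2$. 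Banach's fixed point theorem then delivers the unique $w^d(u,m,\epsilon)$ solving (\ref{main_conv_w}) and satisfying (\ref{norm_wd_varpi}) uniformly in $\epsilon\in D(0,\epsilon_0)$. I expect the main obstacle to lie in the bookkeeping of the many convolution kernels in (\ref{main_conv_w}): for each nested index one must identify the correct $(\gamma_2,\gamma_3)$, verify (\ref{cond_gamma23_k_prime}), and cleanly partition the smallness of $L$ between the two tunable quantities $r_{Q,R_D,1}$ and $(C_{\bf l})_{{\bf l}\in I}$.
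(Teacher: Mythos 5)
Your proposal is correct and follows essentially the same route as the paper: recast (\ref{main_conv_w}) as $w=\mathcal{H}_\epsilon(w)$ by dividing by $P_m(u)$, bound each convolution block via Proposition 1 (with $\gamma_1=k\delta_D-1$ from (\ref{low_bds_Pmu}), $\gamma_2+\gamma_3+2=m_D$ for the $R_D$-terms and $=kl_1/k'$ for the $I$-terms, so that (\ref{cond_gamma23_k_prime}) reduces exactly to $k_1\geq 1$ and to the third inequality of (\ref{cond_exist_uniq_wd})), handle the $m$-convolutions via Proposition 2 and the quotient bound (\ref{bds_quotient_Pmu_RD}), and conclude by the contraction mapping theorem on $\bar{B}(0,\varpi)$ with the smallness split between $r_{Q,R_D,1}$ and the $C_{\bf l}$. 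This matches the paper's Lemma 5 and its conclusion.
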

\begin{proof}
We initiate the proof with a lemma which studies a shrinking map that allows us to apply a classical fixed point theorem.
\begin{lemma} Under the constraints (\ref{cond_exist_uniq_wd}), one can select constants $r_{Q,R_{D},1}>0$, $C_{\bf l}$ for ${\bf l} \in I$
and $\varpi>0$ in a way that
for all $\epsilon \in D(0,\epsilon_{0})$, the map $\mathcal{H}_{\epsilon}$ defined as
\begin{multline}
\mathcal{H}_{\epsilon}(w(u,m)) :=  \sum_{1 \leq p \leq m_{D}-1} R_{D}(im) k^{\delta_D} A_{m_{D},p}
\frac{u^{k'}}{\Gamma(m_{D}-p)P_{m}(u)}\\
\times \int_{0}^{u^{k'}} (u^{k'}-s)^{m_{D}-p - 1}
(k')^{p} s^{p} w(s^{1/k'},m) \frac{ds}{s}\\
+ R_{D}(im) k^{\delta_D} \sum_{q=1}^{m_{D}-1} a_{q,m_{D}} \left( \frac{u^{k'}}{\Gamma(\frac{d_{q,k,D}}{k'})P_{m}(u)}
\int_{0}^{u^{k'}} (u^{k'} - s)^{\frac{d_{q,k,D}}{k'} - 1} (k')^{q} s^{q} w(s^{1/k'},m) \frac{ds}{s} \right.
\\
\left. + \sum_{1 \leq p \leq q-1} A_{q,p} \frac{u^{k'}}{\Gamma( \frac{d_{q,k,D} + k'(q-p)}{k'} )P_{m}(u)}
\int_{0}^{u^{k'}} (u^{k'} - s)^{\frac{d_{q,k,D} + k'(q-p)}{k'} - 1} (k')^{p} s^{p} w(s^{1/k'},m) \frac{ds}{s} \right)\\
+ \sum_{{\bf l} = (l_{1},l_{2}) \in I} \epsilon^{\Delta_{{\bf l}} - kl_{1}} \frac{1}{(2\pi)^{1/2}}
\int_{-\infty}^{+\infty} C_{{\bf l}}(m-m_{1},\epsilon) k^{l_1} R_{{\bf l}}(im_{1}) \\
\times \sum_{q=1}^{l_2} a_{q,l_{2}}
\left( \frac{u^{k'}}{\Gamma(\frac{e_{q,k,l_{1}}}{k'})P_{m}(u)} \int_{0}^{u^{k'}} (u^{k'} - s)^{\frac{e_{q,k,l_{1}}}{k'}-1}
(k')^{q} s^{q} w(s^{1/k'},m_{1}) \frac{ds}{s} \right.\\
\left. + \sum_{1 \leq p \leq q-1} A_{q,p} \frac{u^{k'}}{\Gamma( \frac{e_{q,k,l_{1}} + k'(q-p)}{k'} )P_{m}(u)}
\int_{0}^{u^{k'}} (u^{k'} - s)^{\frac{e_{q,k,l_{1}} + k'(q-p)}{k'} - 1} (k')^{p} s^{p} w(s^{1/k'},m_{1}) \frac{ds}{s} \right) dm_{1}\\
+ \frac{\psi(u,m,\epsilon)}{P_{m}(u)}
\end{multline}
verifies the next properties.\\
{\bf i)} The following inclusion
\begin{equation}
\mathcal{H}_{\epsilon}(\bar{B}(0,\varpi)) \subset \bar{B}(0,\varpi) \label{H_epsilon_B_into_B} 
\end{equation}
where $\bar{B}(0,\varpi)$ is the closed ball of radius $\varpi$ centered at 0 in $F_{(\nu_{2},\beta,\mu,k_{1})}^{d}$, for all
$\epsilon \in D(0,\epsilon_{0})$.\\
{\bf ii)} The map $\mathcal{H}_{\epsilon}$ is shrinking, namely
\begin{equation}
|| \mathcal{H}_{\epsilon}(w_{2}(u,m)) - \mathcal{H}_{\epsilon}(w_{1}(u,m)) ||_{(\nu_{2},\beta,\mu,k_{1})} \leq \frac{1}{2}
||w_{2}(u,m) - w_{1}(u,m)||_{(\nu_{2},\beta,\mu,k_{1})} \label{H_epsilon_shrink}
\end{equation}
whenever $w_{1},w_{2} \in \bar{B}(0,\varpi)$, for all $\epsilon \in D(0,\epsilon_{0})$.
\end{lemma}
\begin{proof} According to the first and second bounds in (\ref{psi_exp_bds}) together with (\ref{low_bds_Pmu}), we can find a constant $\zeta_{\psi}>0$ (depending on
$\nu_{2}$ and $k_{1}$) with
\begin{equation}
|| \frac{\psi(u,m,\epsilon)}{P_{m}(u)} ||_{(\nu_{2},\beta,\mu,k_{1})} \leq \frac{K_{0}\zeta_{\psi}}{C_{P}(r_{Q,R_{D},1})^{\frac{1}{k\delta_{D}}}
\min_{m \in \mathbb{R}} |R_{D}(im)| } \label{norm_bds_ball_1}
\end{equation}
where $K_{0}>0$ is a constant introduced in the condition (\ref{norm_bds_psi_n}), for all $\epsilon \in D(0,\epsilon_{0})$.

We focus on the first feature (\ref{H_epsilon_B_into_B}). Let us take $w(u,m)$ in $F_{(\nu_{2},\beta,\mu,k_{1})}^{d}$ and assume that
$||w(u,m)||_{(\nu_{2},\beta,\mu,k_{1})} \leq \varpi$. In accordance with the first condition of (\ref{cond_exist_uniq_wd}) and
(\ref{rel_k_deltaD_mD_k_prime}) together with
the lower bounds (\ref{low_bds_Pmu}), Proposition 1 gives rise to a constant $C_{1.1}>0$ (depending on $\nu_{2},k,\delta_{D},m_{D},k',k_{1},r$) such that
\begin{multline}
||R_{D}(im) \frac{u^{k'}}{P_{m}(u)} \int_{0}^{u^{k'}} (u^{k'} -s)^{m_{D}-p-1}s^{p-1}w(s^{1/k'},m) ds ||_{(\nu_{2},\beta,\mu,k_{1})}\\
\leq \frac{C_{1.1}}{C_{P}(r_{Q,R_{D},1})^{\frac{1}{k\delta_{D}}}} ||w(u,m)||_{(\nu_{2},\beta,\mu,k_{1})}
\leq \frac{C_{1.1}}{C_{P}(r_{Q,R_{D},1})^{\frac{1}{k\delta_{D}}}} \varpi \label{norm_bds_ball_2}
\end{multline}
for all $1 \leq p \leq m_{D}-1$. Again the first constraint of (\ref{cond_exist_uniq_wd}) and
(\ref{rel_k_deltaD_mD_k_prime}) along with (\ref{low_bds_Pmu}), allows us to
apply Proposition 1 in order to get a constant $C_{1.2}>0$ (depending on $\nu_{2},k,\delta_{D},m_{D},k',k_{1},r$) for which
\begin{multline}
||R_{D}(im)\frac{u^{k'}}{P_{m}(u)} \int_{0}^{u^{k'}} (u^{k'}-s)^{\frac{d_{q,k,D}}{k'} - 1}
s^{q-1}w(s^{1/k'},m) ds ||_{(\nu_{2},\beta,\mu,k_{1})} \\
\leq \frac{C_{1.2}}{C_{P}(r_{Q,R_{D},1})^{\frac{1}{k\delta_{D}}}}
||w(u,m)||_{(\nu_{2},\beta,\mu,k_{1})} \leq \frac{C_{1.2}}{C_{P}(r_{Q,R_{D},1})^{\frac{1}{k\delta_{D}}}} \varpi \label{norm_bds_ball_3}
 \end{multline}
for all $1 \leq q \leq m_{D}-1$ together with
\begin{multline}
||R_{D}(im)\frac{u^{k'}}{P_{m}(u)} \int_{0}^{u^{k'}} (u^{k'} - s)^{\frac{d_{q,k,D} + k'(q-p)}{k'} - 1} s^{p-1} w(s^{1/k'},m) ds
||_{(\nu_{2},\beta,\mu,k_{1})} \leq \\
\frac{C_{1.2}}{C_{P}(r_{Q,R_{D},1})^{\frac{1}{k\delta_{D}}}} ||w(u,m)||_{(\nu_{2},\beta,\mu,k_{1})}
\leq \frac{C_{1.2}}{C_{P}(r_{Q,R_{D},1})^{\frac{1}{k\delta_{D}}}} \varpi \label{norm_bds_ball_4}
\end{multline}
whenever $1 \leq q \leq m_{D}-1$, $1 \leq p \leq q-1$.

We handle now the terms arising in the sum over the set $I$. Owing to the bounds (\ref{bds_quotient_Pmu_RD}) together with the second
condition of (\ref{cond_exist_uniq_wd}), Proposition 2 grants the existence of a constant $C_{2}>0$
(depending on $R_{D}$, $R_{\bf l}$ and $\mu$) such that
\begin{multline}
|| \int_{-\infty}^{+\infty} C_{\bf l}(m-m_{1},\epsilon) R_{\bf l}(im_{1}) \frac{u^{k'}}{P_{m}(u)}
\int_{0}^{u^{k'}} (u^{k'} - s)^{\frac{e_{q,k,l_{1}}}{k'}-1} s^{q-1} \\
\times w(s^{1/k'},m_{1}) ds dm_{1} ||_{(\nu_{2},\beta,\mu,k_{1})}
\\= ||\frac{1}{R_{D}(im)} \int_{-\infty}^{+\infty} C_{\bf l}(m-m_{1},\epsilon)R_{\bf l}(im_{1})
\left( \frac{P_{m_1}(u)R_{D}(im)}{P_{m}(u)R_{D}(im_{1})} \right)
\frac{u^{k'}R_{D}(im_{1})}{P_{m_1}(u)} \\
\times \int_{0}^{u^{k'}} (u^{k'}-s)^{\frac{e_{q,k,l_{1}}}{k'}-1} s^{q-1} w(s^{1/k'},m_{1}) ds dm_{1}
||_{(\nu_{2},\beta,\mu,k_{1})} \leq C_{2}C_{P,R_{D}} ||C_{\bf l}(m,\epsilon)||_{(\beta,\mu)}\\
\times ||\frac{u^{k'}R_{D}(im)}{P_{m}(u)} \int_{0}^{u^{k'}} (u^{k'}-s)^{\frac{e_{q,k,l_{1}}}{k'}-1} s^{q-1}w(s^{1/k'},m) ds
||_{(\nu_{2},\beta,\mu,k_{1})}  \label{norm_bds_ball_5}
\end{multline}
for all ${\bf l} = (l_{1},l_{2}) \in I$, all $1 \leq q \leq l_{2}$
together with
\begin{multline}
|| \int_{-\infty}^{+\infty} C_{\bf l}(m-m_{1},\epsilon) R_{\bf l}(im_{1}) \frac{u^{k'}}{P_{m}(u)}
\int_{0}^{u^{k'}} (u^{k'} - s)^{\frac{e_{q,k,l_{1}} + k'(q-p)}{k'}-1} s^{p-1} \\
\times w(s^{1/k'},m_{1}) ds dm_{1} ||_{(\nu_{2},\beta,\mu,k_{1})}
\\= ||\frac{1}{R_{D}(im)} \int_{-\infty}^{+\infty} C_{\bf l}(m-m_{1},\epsilon)R_{\bf l}(im_{1})
\left( \frac{P_{m_1}(u)R_{D}(im)}{P_{m}(u)R_{D}(im_{1})} \right)
\frac{u^{k'}R_{D}(im_{1})}{P_{m_1}(u)} \\
\times \int_{0}^{u^{k'}} (u^{k'}-s)^{\frac{e_{q,k,l_{1}}+k'(q-p)}{k'}-1} s^{p-1} w(s^{1/k'},m_{1}) ds dm_{1}
||_{(\nu_{2},\beta,\mu,k_{1})} \leq C_{2}C_{P,R_{D}} ||C_{\bf l}(m,\epsilon)||_{(\beta,\mu)}\\
\times ||\frac{u^{k'}R_{D}(im)}{P_{m}(u)} \int_{0}^{u^{k'}} (u^{k'}-s)^{\frac{e_{q,k,l_{1}}+k'(q-p)}{k'}-1} s^{p-1}w(s^{1/k'},m) ds
||_{(\nu_{2},\beta,\mu,k_{1})} \label{norm_bds_ball_6}
\end{multline}
whenever ${\bf l}=(l_{1},l_{2}) \in I$, $1 \leq q \leq l_{2}$, $1 \leq p \leq q-1$.

Furthermore, under the third requirement of (\ref{cond_exist_uniq_wd}) and keeping in mind the lower bounds
(\ref{low_bds_Pmu}), we obtain a constant $C_{1.3}>0$ (depending on $\nu_{2},{\bf l}$,$k$,$k'$,$k_{1}$,$\delta_{D}$,$r$) such that
\begin{multline}
||\frac{u^{k'}R_{D}(im)}{P_{m}(u)} \int_{0}^{u^{k'}} (u^{k'}-s)^{\frac{e_{q,k,l_{1}}}{k'}-1} s^{q-1}w(s^{1/k'},m) ds
||_{(\nu_{2},\beta,\mu,k_{1})} \\
\leq \frac{C_{1.3}}{C_{P}(r_{Q,R_{D},1})^{\frac{1}{k\delta_{D}}}} ||w(u,m)||_{(\nu_{2},\beta,\mu,k_{1})}
\leq \frac{C_{1.3}}{C_{P}(r_{Q,R_{D},1})^{\frac{1}{k\delta_{D}}}} \varpi \label{norm_bds_ball_7}
\end{multline}
along with
\begin{multline}
||\frac{u^{k'}R_{D}(im)}{P_{m}(u)} \int_{0}^{u^{k'}} (u^{k'}-s)^{\frac{e_{q,k,l_{1}}+k'(q-p)}{k'}-1} s^{p-1}w(s^{1/k'},m) ds
||_{(\nu_{2},\beta,\mu,k_{1})} \\
\leq \frac{C_{1.3}}{C_{P}(r_{Q,R_{D},1})^{\frac{1}{k\delta_{D}}}} ||w(u,m)||_{(\nu_{2},\beta,\mu,k_{1})}
\leq \frac{C_{1.3}}{C_{P}(r_{Q,R_{D},1})^{\frac{1}{k\delta_{D}}}} \varpi \label{norm_bds_ball_8}
\end{multline}
provided that ${\bf l}=(l_{1},l_{2}) \in I$, $1 \leq q \leq l_{2}$, $1 \leq p \leq q-1$.

Now, we assign the radius $r_{Q,R_{D},1}>0$ to be large enough and the constants $C_{\bf l}>0$, for
${\bf l} \in I$, to be sufficiently tiny in order to find
a constant $\varpi>0$ with
\begin{multline}
\sum_{1 \leq p \leq m_{D}-1} \frac{k^{\delta_D}|A_{m_{D},p}|(k')^{p}}{\Gamma(m_{D}-p)}
\frac{C_{1.1}}{C_{P}(r_{Q,R_{D},1})^{\frac{1}{k\delta_{D}}}} \varpi \\
+ \sum_{q=1}^{m_{D}-1} k^{\delta_D} |a_{q,m_{D}}| \left(
\frac{C_{1.2}(k')^{q}}{\Gamma(\frac{d_{q,k,D}}{k'})C_{P}(r_{Q,R_{D},1})^{\frac{1}{k\delta_{D}}}} \varpi \right. \\
+
\left. \sum_{1 \leq p \leq q-1} |A_{q,p}| \frac{C_{1.2}(k')^{p}}{\Gamma(\frac{d_{q,k,D} + k'(q-p)}{k'})C_{P}(r_{Q,R_{D},1})^{\frac{1}{k\delta_{D}}}} \varpi \right) + \sum_{{\bf l} = (l_{1},l_{2}) \in I} |\epsilon|^{\Delta_{\bf l} - kl_{1}}
\frac{1}{(2\pi)^{1/2}} k^{l_1}\\
\times \sum_{q=1}^{l_2} |a_{q,l_{2}}| \left( \frac{(k')^{q}}{\Gamma(\frac{e_{q,k,l_{1}}}{k'})}C_{2}C_{P,R_{D}}C_{{\bf l}}
\frac{C_{1.3}}{C_{P}(r_{Q,R_{D},1})^{\frac{1}{k\delta_{D}}}} \varpi \right.\\
\left. + \sum_{1 \leq p \leq q-1} |A_{q,p}| \frac{(k')^{p}}{\Gamma(\frac{e_{q,k,l_{1}}+k'(q-p)}{k'})}C_{2}C_{P,R_{D}}C_{{\bf l}}
\frac{C_{1.3}}{C_{P}(r_{Q,R_{D},1})^{\frac{1}{k\delta_{D}}}} \varpi \right) \\
+ 
\frac{K_{0}\zeta_{\psi}}{C_{P}(r_{Q,R_{D},1})^{\frac{1}{k\delta_{D}}} \min_{m \in \mathbb{R}} |R_{D}(im)|} \leq \varpi \label{cond_Hepsilon_inclusion}
\end{multline}
At last, if one gathers the above norms bounds (\ref{norm_bds_ball_1}), (\ref{norm_bds_ball_2}), (\ref{norm_bds_ball_3}),
(\ref{norm_bds_ball_4}) in a row with
(\ref{norm_bds_ball_5}), (\ref{norm_bds_ball_6}), (\ref{norm_bds_ball_7}), (\ref{norm_bds_ball_8}) under the restriction
(\ref{cond_Hepsilon_inclusion}), the inclusion (\ref{H_epsilon_B_into_B}) follows.\medskip

In the next part of the proof, we turn to the explanation of the second property (\ref{H_epsilon_shrink}). Indeed, take
$w_{1}(\tau,m)$ and $w_{2}(\tau,m)$ inside the ball $\bar{B}(0,\varpi)$ from $F_{(\nu_{2},\beta,\mu,k_{1})}^{d}$. Returning back to the
inequalities (\ref{norm_bds_ball_2}), (\ref{norm_bds_ball_3}), (\ref{norm_bds_ball_4}) allows us to get the next bounds
\begin{multline}
||R_{D}(im) \frac{u^{k'}}{P_{m}(u)} \int_{0}^{u^{k'}} (u^{k'} -s)^{m_{D}-p-1}s^{p-1}(w_{2}(s^{1/k'},m) - w_{1}(s^{1/k'},m)) ds
||_{(\nu_{2},\beta,\mu,k_{1})}\\
\leq \frac{C_{1.1}}{C_{P}(r_{Q,R_{D},1})^{\frac{1}{k\delta_{D}}}} ||w_{2}(u,m) - w_{1}(u,m)||_{(\nu_{2},\beta,\mu,k_{1})}
\label{norm_bds_shrink_1}
\end{multline}
for all $1 \leq p \leq m_{D}-1$ along with
\begin{multline}
||R_{D}(im)\frac{u^{k'}}{P_{m}(u)} \int_{0}^{u^{k'}} (u^{k'}-s)^{\frac{d_{q,k,D}}{k'} - 1}
s^{q-1}(w_{2}(s^{1/k'},m) - w_{1}(s^{1/k'},m)) ds ||_{(\nu_{2},\beta,\mu,k_{1})} \\
\leq \frac{C_{1.2}}{C_{P}(r_{Q,R_{D},1})^{\frac{1}{k\delta_{D}}}}
||w_{2}(u,m) - w_{1}(u,m)||_{(\nu_{2},\beta,\mu,k_{1})} \label{norm_bds_shrink_2}
 \end{multline}
for all $1 \leq q \leq m_{D}-1$ together with
\begin{multline}
||R_{D}(im)\frac{u^{k'}}{P_{m}(u)} \int_{0}^{u^{k'}} (u^{k'} - s)^{\frac{d_{q,k,D} + k'(q-p)}{k'} - 1} s^{p-1}\\
\times 
(w_{2}(s^{1/k'},m) - w_{1}(s^{1/k'},m)) ds
||_{(\nu_{2},\beta,\mu,k_{1})} \leq 
\frac{C_{1.2}}{C_{P}(r_{Q,R_{D},1})^{\frac{1}{k\delta_{D}}}} ||w_{2}(u,m) - w_{1}(u,m)||_{(\nu_{2},\beta,\mu,k_{1})}
\label{norm_bds_shrink_3}
\end{multline}
whenever $1 \leq q \leq m_{D}-1$, $1 \leq p \leq q-1$.

Furthermore, the inequalities (\ref{norm_bds_ball_5}) combined with (\ref{norm_bds_ball_7}) and
(\ref{norm_bds_ball_6}) coupled with (\ref{norm_bds_ball_8}) give rise to the next two bounds
\begin{multline}
|| \int_{-\infty}^{+\infty} C_{\bf l}(m-m_{1},\epsilon) R_{\bf l}(im_{1}) \frac{u^{k'}}{P_{m}(u)}
\int_{0}^{u^{k'}} (u^{k'} - s)^{\frac{e_{q,k,l_{1}}}{k'}-1} s^{q-1} \\
\times (w_{2}(s^{1/k'},m_{1}) - w_{1}(s^{1/k'},m_{1})) ds dm_{1} ||_{(\nu_{2},\beta,\mu,k_{1})} \leq C_{2}C_{P,R_{D}}
||C_{\bf l}(m,\epsilon)||_{(\beta,\mu)}\\
\times
\frac{C_{1.3}}{C_{P}(r_{Q,R_{D},1})^{\frac{1}{k\delta_{D}}}} ||w_{2}(u,m) - w_{1}(u,m)||_{(\nu_{2},\beta,\mu,k_{1})} \label{norm_bds_shrink_4}
\end{multline}
for all ${\bf l}=(l_{1},l_{2}) \in I$, all $1 \leq q \leq l_{2}$ in a row with
\begin{multline}
 || \int_{-\infty}^{+\infty} C_{\bf l}(m-m_{1},\epsilon) R_{\bf l}(im_{1}) \frac{u^{k'}}{P_{m}(u)}
\int_{0}^{u^{k'}} (u^{k'} - s)^{\frac{e_{q,k,l_{1}} + k'(q-p)}{k'}-1} s^{p-1} \\
\times (w_{2}(s^{1/k'},m_{1}) - w_{1}(s^{1/k'},m_{1})) ds dm_{1} ||_{(\nu_{2},\beta,\mu,k_{1})}
\leq C_{2}C_{P,R_{D}} ||C_{\bf l}(m,\epsilon)||_{(\beta,\mu)} \\
\times 
\frac{C_{1.3}}{C_{P}(r_{Q,R_{D},1})^{\frac{1}{k\delta_{D}}}} ||w_{2}(u,m) - w_{1}(u,m)||_{(\nu_{2},\beta,\mu,k_{1})} \label{norm_bds_shrink_5}
\end{multline}
for all ${\bf l}=(l_{1},l_{2}) \in I$, $1 \leq q \leq l_{2}$, $1 \leq p \leq q-1$.

Then, we choose the radius $r_{Q,R_{D},1}>0$ large enough and control the constant $C_{\bf l}>0$, for ${\bf l} \in I$, close to 0 in a
way that
\begin{multline}
 \sum_{1 \leq p \leq m_{D}-1} \frac{k^{\delta_D}|A_{m_{D},p}|(k')^{p}}{\Gamma(m_{D}-p)}
\frac{C_{1.1}}{C_{P}(r_{Q,R_{D},1})^{\frac{1}{k\delta_{D}}}} \\
+ \sum_{q=1}^{m_{D}-1} k^{\delta_D} |a_{q,m_{D}}| \left(
\frac{C_{1.2}(k')^{q}}{\Gamma(\frac{d_{q,k,D}}{k'})C_{P}(r_{Q,R_{D},1})^{\frac{1}{k\delta_{D}}}} \right. \\
+
\left. \sum_{1 \leq p \leq q-1} |A_{q,p}| \frac{C_{1.2}(k')^{p}}{\Gamma(\frac{d_{q,k,D} + k'(q-p)}{k'})C_{P}(r_{Q,R_{D},1})^{\frac{1}{k\delta_{D}}}} \right) + \sum_{{\bf l} = (l_{1},l_{2}) \in I} |\epsilon|^{\Delta_{\bf l} - kl_{1}}
\frac{1}{(2\pi)^{1/2}} k^{l_1}\\
\times \sum_{q=1}^{l_2} |a_{q,l_{2}}| \left( \frac{(k')^{q}}{\Gamma(\frac{e_{q,k,l_{1}}}{k'})}C_{2}C_{P,R_{D}}C_{{\bf l}}
\frac{C_{1.3}}{C_{P}(r_{Q,R_{D},1})^{\frac{1}{k\delta_{D}}}} \right.\\
\left. + \sum_{1 \leq p \leq q-1} |A_{q,p}| \frac{(k')^{p}}{\Gamma(\frac{e_{q,k,l_{1}}+k'(q-p)}{k'})}C_{2}C_{P,R_{D}}C_{{\bf l}}
\frac{C_{1.3}}{C_{P}(r_{Q,R_{D},1})^{\frac{1}{k\delta_{D}}}} \right) \leq \frac{1}{2} \label{cond_Hepsilon_shrink}
\end{multline}
Lastly, we collect the norms estimates overhead (\ref{norm_bds_shrink_1}), (\ref{norm_bds_shrink_2}), (\ref{norm_bds_shrink_3}) along with
(\ref{norm_bds_shrink_4}), (\ref{norm_bds_shrink_5}) under the requirement (\ref{cond_Hepsilon_shrink}) which leads to the contractive property
(\ref{H_epsilon_shrink}).

Conclusively, we select the radius $r_{Q,R_{D},1}>0$ and the constants $C_{\bf l}>0$, for ${\bf l} \in I$, in order that
(\ref{cond_Hepsilon_inclusion}) and (\ref{cond_Hepsilon_shrink}) are both achieved. Lemma 5 follows.
\end{proof}
We return to the proof of Proposition 3. For $\varpi>0$ chosen as in the lemma above, we set the closed ball
$\bar{B}(0,\varpi) \subset F_{(\nu_{2},\beta,\mu,k_{1})}^{d}$ which represents a complete metric space for the distance
$d(x,y) = ||x - y||_{(\nu_{2},\beta,\mu,k_{1})}$. According to the same lemma, we observe that $\mathcal{H}_{\epsilon}$ induces a contractive
application from $(\bar{B}(0,\varpi),d)$ into itself. Then, according to the classical contractive mapping theorem, the map
$\mathcal{H}_{\epsilon}$ possesses a unique fixed point that we set as $w^{d}(u,m,\epsilon)$, meaning that
\begin{equation}
\mathcal{H}_{\epsilon}(w^{d}(u,m,\epsilon) ) = w^{d}(u,m,\epsilon), \label{fixed_pt_Hepsilon}
\end{equation}
that belongs to the ball $\bar{B}(0,\varpi)$, for all $\epsilon \in D(0,\epsilon_{0})$. Furthermore, the function $w^{d}(u,m,\epsilon)$
depends holomorphically on $\epsilon$ in $D(0,\epsilon_{0})$. If one displaces the term
$$ R_{D}(im)k^{\delta_D}(k')^{m_D}u^{k\delta_{D}}w(u,m,\epsilon) $$
from the right to the left handside of (\ref{main_conv_w}) and then divide by the polynomial $P_{m}(u)$ defined in (\ref{defin_Pm}), we check
that (\ref{main_conv_w}) can be exactly recast as the equation (\ref{fixed_pt_Hepsilon}) above. As a result, the unique fixed point
$w^{d}(u,m,\epsilon)$ of $\mathcal{H}_{\epsilon}$ obtained overhead in $\bar{B}(0,\varpi)$ precisely solves the equation (\ref{main_conv_w}).
\end{proof}

\section{Solving the first auxiliary integro-differential equation}

The main purpose of this section is the construction of a solution of the integro-differential equation (\ref{main_diff_conv_W})
for vanishing initial data expressed as Laplace transform of order $k'$ that belongs to the Banach space disclosed in Definition 4.

\begin{prop} Let $w^{d}(u,m,\epsilon)$ be the unique solution of the integral equation (\ref{main_conv_w}) within the Banach space
$F^{d}_{(\nu_{2},\beta,\mu,k_{1})}$ built up in Proposition 3. We set up
\begin{equation}
W^{d}(\tau,m,\epsilon) = k' \int_{L_{\gamma}} w^{d}(u,m,\epsilon) \exp( -(\frac{u}{\tau})^{k'} ) \frac{du}{u} \label{Laplace_Wd}
\end{equation}
as the Laplace transform of $w^{d}(u,m,\epsilon)$ of order $k'$ in direction $d$ where
the halfline of integration $L_{\gamma} = \mathbb{R}_{+}e^{\sqrt{-1}\gamma}$ belongs to the sector $U_{d} \cup \{ 0 \}$. Then, for
all $\epsilon \in D(0,\epsilon_{0})$, the map $(\tau,m) \mapsto W^{d}(\tau,m,\epsilon)$ appertains to the Banach space
$E^{d}_{(\nu_{1},\beta,\mu,\kappa_{1})}$ where $S_{d}$ stands for an unbounded sector with bisecting direction $d$ and opening $\theta_{k'}$
that needs to fulfill
\begin{equation}
0 < \theta_{k'} < \frac{\pi}{k'} + \mathrm{Ap}(U_{d}) \label{cond_thetak'}
\end{equation}
for $\mathrm{Ap}(U_{d})$ defined as the aperture of the sector $U_{d}$. The real number $\nu_{1}>0$ is properly chosen and satisfies
$\nu_{1} > (1/T_{0})^{\kappa_1}$ for $T_{0}$ given in
(\ref{norm_bds_psi_n}) and $\kappa_{1}$ is introduced after (\ref{maj_quotient_Gamma_k_prime_k1}) under the condition
(\ref{kappa1_less_k}). Additionally, a constant $\varrho>0$ can be chosen with the bounds
\begin{equation}
|| W^{d}(\tau,m,\epsilon)||_{(\nu_{1},\beta,\mu,\kappa_{1})} \leq \varrho \label{norm_Wd_varrho}
\end{equation}
for all $\epsilon \in D(0,\epsilon_{0})$. Furthermore, $W^{d}(\tau,m,\epsilon)$ fulfills the first auxiliary integro-differential equation
(\ref{main_diff_conv_W}) on the domain $S_{d} \times \mathbb{R} \times D(0,\epsilon_{0})$.
\end{prop}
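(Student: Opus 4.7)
The statement decomposes into three tasks: (a) verify that the integral (\ref{Laplace_Wd}) converges and defines a holomorphic function of $\tau$ on a sector $S_d$ of aperture up to $\pi/k'+\mathrm{Ap}(U_d)$; (b) bound $W^d$ quantitatively so that it enters the Banach space $E^d_{(\nu_1,\beta,\mu,\kappa_1)}$ and obeys (\ref{norm_Wd_varrho}); (c) check that $W^d$ solves (\ref{main_diff_conv_W}). The plan is to handle (a) and (b) together by a direct majoration of the Laplace integral, and to obtain (c) formally from Lemma 2 applied term-by-term to the fixed point equation (\ref{main_conv_w}).

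For (a) and (b), Proposition 3 provides the bound $|w^d(u,m,\epsilon)|\leq \varpi(1+|m|)^{-\mu}|u|e^{-\beta|m|}\exp(\nu_2|u|^{k_1})$ on $U_d$, uniformly in $\epsilon\in D(0,\epsilon_0)$. Parametrizing $L_\gamma$ by $u=re^{\sqrt{-1}\gamma}$ and choosing $\gamma$ such that $\cos(k'(\gamma-\arg\tau))\geq\delta_1>0$, as in Definition 1, yields
$|W^d(\tau,m,\epsilon)|\leq k'\varpi(1+|m|)^{-\mu}e^{-\beta|m|}\int_0^{+\infty}\exp(\nu_2 r^{k_1}-\delta_1 r^{k'}/|\tau|^{k'})\,dr.$
The half-lines $L_\gamma\subset U_d\cup\{0\}$ permitted for this choice of $\gamma$ are precisely those with $|\arg\tau-d|<(\pi/k'+\mathrm{Ap}(U_d))/2$, which is (\ref{cond_thetak'}). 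After the substitution $r=|\tau|v$, the inner integral becomes $|\tau|\int_0^{+\infty}\exp(\nu_2|\tau|^{k_1}v^{k_1}-\delta_1 v^{k'})dv$. Since $k_1<k'$, the exponent is maximized at a unique $v_0$ proportional to $|\tau|^{k_1/(k'-k_1)}$, where its value reduces to a constant multiple of $|\tau|^{\kappa_1}$ with $\kappa_1=k_1 k'/(k'-k_1)$. A Watson-type estimate, or a direct splitting of the integral at $v_0$ followed by Gaussian-type control on both sides of the peak, then produces an upper bound $C_1\exp(\nu_1|\tau|^{\kappa_1})$ for any $\nu_1$ strictly larger than the optimal coefficient; in particular, we may choose $\nu_1>(1/T_0)^{\kappa_1}$ as required. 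Setting $\varrho=k'\varpi C_1$ delivers (\ref{norm_Wd_varrho}), and holomorphy in $\tau$ on $S_d$ is inherited from the integrand together with the uniform decay supplied by $\delta_1$.

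For (c), each convolution $\frac{u^{k'}}{\Gamma(m'/k')}\int_0^{u^{k'}}(u^{k'}-s)^{m'/k'-1}(k')^{j}s^{j}w^d(s^{1/k'},m)\,ds/s$ occurring on the right-hand side of (\ref{main_conv_w}) is precisely the image under the inverse of $\mathcal{L}_{k'}^{d}$ of $\tau^{m'}(\tau^{k'+1}\partial_\tau)^{j}W^d$, as one reads off by iterating the first identity of Lemma 2 to convert $(k'u^{k'})^{j}$ into $(\tau^{k'+1}\partial_\tau)^{j}$ and applying the second identity to recover the monomial prefactor $\tau^{m'}$. With $m'$ running through $k'(m_D-p)$, $d_{q,k,D}$, $d_{q,k,D}+k'(q-p)$, $e_{q,k,l_1}$, $e_{q,k,l_1}+k'(q-p)$ (all positive integers by (\ref{rel_k_deltaD_mD_k_prime})--(\ref{rel_kl_one_l_two_k_prime}) and the auxiliary identities (\ref{expand_k_deltaD_mD})--(\ref{expand_k_l1_q})), applying $\mathcal{L}_{k'}^{d}$ to (\ref{main_conv_w}) and invoking Fubini to swap the $m_1$-integral with the Laplace one (justified by $\exp(-\delta_1(r/|\tau|)^{k'})$ dominating $\exp(\nu_2 r^{k_1})$) recovers exactly the prepared form (\ref{main_diff_conv_prep_form}), which is (\ref{main_diff_conv_W}) in view of Lemmas 3 and 4. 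The inhomogeneous term matches since $\Psi_d=\mathcal{L}_{k'}^{d}(\psi)$ by construction.

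The main obstacle is the Watson-type estimate in (b): one must extract the peak contribution of $\int_0^{+\infty}\exp(\nu_2|\tau|^{k_1}v^{k_1}-\delta_1 v^{k'})dv$ precisely enough to obtain the exponent $\nu_1|\tau|^{\kappa_1}$ with a $\nu_1$ that is still compatible with the inequalities Section 5 will impose on the choice of $S_d$ and $U_d$ (via $\delta_1$ and the aperture). All remaining steps---the Fubini exchanges, the holomorphy argument, and the term-by-term recognition of the integro-differential equation---are essentially routine once this Laplace-acceleration estimate is in place.
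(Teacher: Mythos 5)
Your proposal is correct and follows the same overall architecture as the paper's proof: the same choice of $\tau$-dependent direction $\gamma$ with $\cos(k'(\gamma-\arg\tau))\geq\delta_1$ to justify the aperture condition, the same reduction of the norm bound to estimating $\int_{0}^{+\infty}\exp(\nu_{2}r^{k_1})\exp(-\delta_1 r^{k'}/|\tau|^{k'})\,dr$, and the same backwards verification of (\ref{main_diff_conv_W}) by applying the Laplace transform of order $k'$ to (\ref{main_conv_w}) term by term via Lemma 2 and then undoing the preparations (\ref{Tahara_form_1})--(\ref{Tahara_form_3}) and Lemma 3. The one genuine divergence is the central estimate: the paper computes the auxiliary integral $\mathbb{E}(x)$ exactly as a power series in $x^{k_1/k'}$ with Gamma-function coefficients, dominates it by a Wiman function $E_{\alpha,\beta}$ via (\ref{quotient_Gamma_alpha_beta}), and invokes the asymptotics (\ref{bds_E_alpha_beta}); you instead rescale $r=|\tau|v$ and run a Laplace-method/peak-splitting argument at $v_{0}\propto|\tau|^{k_1/(k'-k_1)}$, where the exponent's maximum is indeed of size $|\tau|^{\kappa_1}$ with $\kappa_{1}=k_{1}k'/(k'-k_{1})$. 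Both routes are valid; the paper's yields the explicit admissible threshold $\nu_{1}>(1/T_{0})^{\kappa_1}(1/\delta_{2})^{k_{1}/(k'-k_{1})}$ essentially for free, whereas yours is more elementary but requires you to check that the splitting constants are uniform in $|\tau|$ (including small $|\tau|$, where the rescaling conveniently produces the prefactor $|\tau|$ needed for membership in $E^{d}_{(\nu_1,\beta,\mu,\kappa_1)}$) and that the optimal peak coefficient is compatible with the stated constraint $\nu_{1}>(1/T_{0})^{\kappa_1}$, which it is.
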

\begin{proof} According to the bounds (\ref{norm_wd_varpi}) and the very definition of the norm, we know in particular that
\begin{equation}
|w^{d}(u,m,\epsilon)| \leq \varpi (1 + |m|)^{-\mu} e^{-\beta |m|} |u| \exp( \nu_{2} |u|^{k_1} ) 
\end{equation}
holds for all $(u,m,\epsilon) \in (U_{d} \cup D(0,r)) \times \mathbb{R} \times D(0,\epsilon_{0})$. From the integral representation
(\ref{Laplace_Wd}) we deduce that
\begin{multline}
|W^{d}(\tau,m,\epsilon)| \leq k' \int_{0}^{+\infty} \varpi (1 + |m|)^{-\mu} e^{-\beta |m|}
\exp( \nu_{2}r^{k_1} ) \exp( - \frac{r^{k'}}{|\tau|^{k'}} \cos( k'(\gamma - \mathrm{arg}(\tau)) ) dr\\
\leq k'\int_{0}^{+\infty} \varpi (1 + |m|)^{-\mu} e^{-\beta |m|} \exp( \nu_{2} r^{k_1} ) \exp( -\frac{r^{k'}}{|\tau|^{k'}} \delta_{2} ) dr
\end{multline}
provided that $\tau \in S_{d}$ and that the direction $\gamma$ is well chosen (and may depend on $\tau$) in a way that
$\cos(k'(\gamma - \mathrm{arg}(\tau))) \geq \delta_{2}$
for some fixed constant $0 < \delta_{2} < 1$, close to 0, which is realizable under the condition (\ref{cond_thetak'}).

In the next step of the proof, we are reduced to supply bounds for the auxiliary function
$$ \mathbb{E}(x) = \int_{0}^{+\infty} \exp( \nu_{2}r^{k_1} ) \exp( -\frac{r^{k'}}{x} ) dr $$
when $x > 0$, especially for large values of $x$. Indeed, we first expand
$$ \exp( \nu_{2} r^{k_1} ) = \sum_{n \geq 0} \nu_{2}^{n} r^{k_{1}n}/n! $$
for all $r \geq 0$. By dominated convergence, we deduce that
$$ \mathbb{E}(x) = \sum_{n \geq 0} \frac{\nu_{2}^{n}}{n!} \int_{0}^{+\infty} r^{k_{1}n} \exp( -\frac{r^{k'}}{x} ) dr $$
for all $x > 0$. This last expression, allows us to compute explicitely the series expansion $\mathbb{E}(x)$ w.r.t $x$ in terms of the Gamma function. Namely, by performing
the change of variable $r^{k'}/x=\tilde{r}$, we get that
$$ \int_{0}^{+\infty} r^{k_{1}n} \exp( -\frac{r^{k'}}{x} ) dr = \frac{1}{k'} x^{\frac{k_1}{k'}n + \frac{1}{k'}}
\int_{0}^{+\infty} (\tilde{r})^{\frac{k_1}{k'}n + \frac{1}{k'} - 1} e^{-\tilde{r}} d\tilde{r} =
\frac{1}{k'} x^{\frac{1}{k'} + \frac{k_{1}}{k'}n}\Gamma( \frac{k_1}{k'}n + \frac{1}{k'})
$$
for all $n \geq 0$, by definition of the Gamma function. Therefore, we can recast
$$ \mathbb{E}(x) = \frac{1}{k'} x^{1/k'} \sum_{n \geq 0} \frac{\Gamma(\frac{k_1}{k'}n + \frac{1}{k'})}{\Gamma(n+1)} (\nu_{2}
x^{\frac{k_1}{k'}})^{n} $$
for all $x > 0$. Bearing in mind the inequality (\ref{quotient_Gamma_alpha_beta}) for the special case
$$ \alpha = \frac{k_1}{k'}n + \frac{1}{k'} \ \ , \ \ \beta = (1 - \frac{k_1}{k'})n + 1 - \frac{1}{k'} $$
we observe that
$$ \frac{\Gamma(\frac{k_1}{k'}n + \frac{1}{k'})}{\Gamma(n+1)} \leq \frac{1}{\Gamma((1 - \frac{k_1}{k'})n + 1 - \frac{1}{k'})} $$
for all $n \geq \max( (k'-1)/k_{1}, 1/(k' - k_{1}) )$. Henceforth, we can bound $\mathbb{E}(x)$ by a Wiman function as follows
\begin{equation}
\mathbb{E}(x) \leq \frac{C_{k_{1},k'}^{1}}{k'} x^{1/k'} \sum_{n \geq 0}
\frac{(\nu_{2}x^{\frac{k_1}{k'}})^{n}}{\Gamma( (1 - \frac{k_1}{k'})n + 1 - \frac{1}{k'})} \label{bds_mathbbE_small_x}
\end{equation}
for some constant $C_{k_{1},k'}^{1}>0$ (depending on $k_{1},k'$), for all $x > 0$. We again require the bounds for the Wiman function $E_{\alpha,\beta}(z)$ for large values of $z$ already
mentioned above, see (\ref{bds_E_alpha_beta}). As a result, a constant $C_{k_{1},k'}>0$ (depending on $k_{1},k'$) can be found such that
\begin{equation}
\mathbb{E}(x) \leq C_{k_{1},k'} \nu_{2}^{\frac{1}{k' - k_{1}}} x^{\frac{1}{k'} + \frac{k_{1}}{k'(k'-k_{1})}}
\exp \left( \nu_{2}^{\frac{1}{1 - \frac{k_1}{k'}}} x^{\frac{k_1}{k' - k_{1}}} \right) \label{bds_mathbbE_large_x}
\end{equation}
whenever $x \geq (1/\nu_{2})^{k'/k_{1}}$.

These two last upper bounds (\ref{bds_mathbbE_small_x}) and (\ref{bds_mathbbE_large_x}) give rise to estimates for
$W^{d}(\tau,m,\epsilon)$. Namely, we get that
\begin{equation}
|W^{d}(\tau,m,\epsilon)| \leq \varpi C_{k_{1},k'}^{1} (1 + |m|)^{-\mu} e^{-\beta|m|} \frac{|\tau|}{\delta_{2}^{1/k'}}
\sum_{n \geq 0} \frac{(\nu_{2} \frac{|\tau|^{k_1}}{(\delta_{2})^{k_{1}/k'}})^{n} }{\Gamma( (1 - \frac{k_1}{k'})n + 1 - \frac{1}{k'})}
\label{bds_Wd_small_tau}
\end{equation}
for all $\tau \in S_{d}$, all $m \in \mathbb{R}$, all $\epsilon \in D(0,\epsilon_{0})$ together with
\begin{multline}
|W^{d}(\tau,m,\epsilon)| \leq \varpi k' C_{k_{1},k'} (\nu_{2})^{\frac{1}{k' - k_{1}}}
(\frac{1}{\delta_{2}})^{\frac{1}{k'} + \frac{k_{1}}{k'(k'-k_{1})}}
(1 + |m|)^{-\mu} e^{-\beta |m|} \\
\times |\tau|^{1 + \frac{k_{1}}{k' - k_{1}}}
\exp \left( \nu_{2}^{\frac{1}{1 - \frac{k_{1}}{k'}}} (\frac{1}{\delta_{2}})^{\frac{k_{1}}{k' - k_{1}}} |\tau|^{\frac{k'k_{1}}{k' - k_{1}}}
\right)\\
= \varpi k' C_{k_{1},k'} (\nu_{2})^{\frac{1}{k' - k_{1}}}
(\frac{1}{\delta_{2}})^{\frac{1}{k'} + \frac{k_{1}}{k'(k'-k_{1})}}
(1 + |m|)^{-\mu} e^{-\beta |m|} |\tau|^{1 + \frac{k_{1}}{k' - k_{1}}}\\
\times \exp \left( (\frac{1}{T_{0}})^{\kappa_1} (\frac{1}{\delta_{2}})^{\frac{k_{1}}{k' - k_{1}}} |\tau|^{\kappa_1} \right)
\label{bds_Wd_large_tau}
\end{multline}
provided that $\tau \in S_{d}$ with $|\tau| \geq \delta_{2}^{1/k'}(1/\nu_{2})^{1/k_{1}}$, $m \in \mathbb{R}$ and
$\epsilon \in D(0,\epsilon_{0})$.

Collecting the bounds (\ref{bds_Wd_small_tau}) for small values of $|\tau|$ and (\ref{bds_Wd_large_tau}) for large values of
$|\tau|$ implies that for all $\epsilon \in D(0,\epsilon_{0})$, the function $(\tau,m) \mapsto W^{d}(\tau,m,\epsilon)$ belongs to
$E^{d}_{(\nu_{1},\beta,\mu,\kappa_{1})}$ when $\nu_{1}>0$ is taken such that
$$ \nu_{1} > (\frac{1}{T_{0}})^{\kappa_1} (\frac{1}{\delta_{2}})^{\frac{k_{1}}{k' - k_{1}}} $$
Moreover, we can find a constant $\varrho>0$ with the estimates (\ref{norm_Wd_varrho}) uniformly in $\epsilon \in D(0,\epsilon_{0})$.

In order to check that $W^{d}(\tau,m,\epsilon)$ fulfills the equation (\ref{main_diff_conv_W}), we follow backwards step by step the construction displayed in Section 3. Namely, since $w^{d}(u,m,\epsilon)$ solves (\ref{main_conv_w}) and belongs to $F^{d}_{(\nu_{2},\beta,\mu,k_{1})}$, the map
$W^{d}(\tau,m,\epsilon)$ solves the integral equation in prepared form (\ref{main_diff_conv_prep_form}) according to the identities of Lemma 2.
Owing to the formulas (\ref{Tahara_form_1}), (\ref{Tahara_form_2}) and (\ref{Tahara_form_3}), we deduce that $W^{d}(\tau,m,\epsilon)$
solves (\ref{main_diff_conv_1}). At last, Lemma 3 allows us to write (\ref{main_diff_conv_1}) in the form
(\ref{main_diff_conv_W}) and we can conclude that $W^{d}(\tau,m,\epsilon)$ is a solution of the first main integro-differential equation
(\ref{main_diff_conv_W}) on the domain $S_{d} \times \mathbb{R} \times D(0,\epsilon_{0})$.
\end{proof}

\section{Analytic solutions on sectors to the main initial value problem}

We revisit the first step of the formal constructions realized in Section 3 in view of the progress made in solving the two auxiliary
problems (\ref{main_conv_w}) and (\ref{main_diff_conv_W}) throughout the above sections 4 and 5.

We need to remind the reader the definition of a good covering in $\mathbb{C}^{\ast}$ and we introduce an adapted version of a so-called
associated sets of sectors to a good covering as proposed in our previous work, \cite{lama1}.

\begin{defin} Let $\varsigma \geq 2$ be an integer. We consider a set $\underline{\mathcal{E}}$ of open sectors
$\mathcal{E}_{p}$ centered at 0, with radius $\epsilon_{0}>0$ for all $0 \leq p \leq \varsigma - 1$
owning the next three properties:\\
i) the intersection $\mathcal{E}_{p} \cap \mathcal{E}_{p+1}$ is not empty for all
$0 \leq p \leq \varsigma-1$ (with the convention that $\mathcal{E}_{\varsigma} = \mathcal{E}_{0}$),\\
ii) the intersection of any three elements of $\underline{\mathcal{E}}$ is empty,\\
iii) the union $\cup_{p=0}^{\varsigma - 1} \mathcal{E}_{p}$ equals $\mathcal{U} \setminus \{ 0 \}$ for some neighborhood $\mathcal{U}$ of 0
in $\mathbb{C}$.\\ 
Then, the set of sectors $\underline{\mathcal{E}}$ is called a good covering of $\mathbb{C}^{\ast}$.
\end{defin}

\begin{defin} We select\\
a) a good covering $\underline{\mathcal{E}} = \{ \mathcal{E}_{p} \}_{0 \leq p \leq \varsigma-1}$ of $\mathbb{C}^{\ast}$,\\
b) a set $\underline{U}$ of unbounded sectors $U_{\mathfrak{d}_{p}}$, $0 \leq p \leq \varsigma-1$ centered at 0 with bisecting direction
$\mathfrak{d}_{p} \in \mathbb{R}$ and small opening $\theta_{U_{\mathfrak{d}_{p}}}>0$, \\
c) a set $\underline{S}$ of unbounded sectors $S_{\mathfrak{d}_{p}}$, $0 \leq p \leq \varsigma-1$ centered at 0 with bisecting direction
$\mathfrak{d}_{p} \in \mathbb{R}$ and aperture $0< \theta_{S_{\mathfrak{d}_{p}}} < \frac{\pi}{k'} + \theta_{U_{\mathfrak{d}_{p}}}$ for
some integer $k' \geq 1$,\\
d) a fixed bounded sector $\mathcal{T}$ centered at 0 with radius $r_{\mathcal{T}}>0$ and a disc $D(0,r)$,\\
suitably selected in a way that the next features are conjointly satisfied:\\
1) the bounds (\ref{dist_Ud_Dr_qlm_M1}) and (\ref{dist_Ud_Dr_qlm_M2}) are fulfilled provided that $u \in U_{\mathfrak{d}_{p}} \cup D(0,r)$,
for all $0 \leq p \leq \varsigma-1$,\\
2) the set $\underline{S}$ fulfills the next properties:\\
2.1) the intersection $S_{\mathfrak{d}_{p}} \cap S_{\mathfrak{d}_{p+1}}$ is not empty for all
$0 \leq p \leq \varsigma-1$ (with the convention that $S_{\mathfrak{d}_{\varsigma}} = S_{\mathfrak{d}_{0}}$),\\
2.2) the union $\cup_{p=0}^{\varsigma - 1} S_{\mathfrak{d}_{p}}$ equals $\mathbb{C} \setminus \{ 0 \}$.\\
3) for all $\epsilon \in \mathcal{E}_{p}$, all $t \in \mathcal{T}$,
\begin{equation}
\epsilon t \in S_{\mathfrak{d}_{p},\theta_{k,k'},\epsilon_{0}r_{\mathcal{T}}} \label{epsilon_t_in_Sdpthetak}
\end{equation}
where $S_{\mathfrak{d}_{p},\theta_{k,k'},\epsilon_{0}r_{\mathcal{T}}}$ stands for a bounded sector with bisecting direction
$\mathfrak{d}_{p}$, opening $\theta_{k,k'}>0$ that fulfills $0 < \theta_{k,k'} < \frac{\pi}{k} + \theta_{S_{\mathfrak{d}_{p}}}$
and radius $\epsilon_{0}r_{\mathcal{T}}$, for all $0 \leq p \leq \varsigma-1$.

When the above properties are fulfilled, we say that the set of data
$\{ \underline{\mathcal{E}}, \underline{U}, \underline{S}, \mathcal{T}, D(0,r) \}$ is admissible. 
\end{defin}

We state now the first main result of the work. We build up a family of actual holomorphic solutions to the main initial value problem
(\ref{main_PDE_u}) defined on sectors $\mathcal{E}_{p}$, $0 \leq p \leq \varsigma-1$, of a good covering in $\mathbb{C}^{\ast}$. Upper
control for the difference between consecutive solutions on the intersections $\mathcal{E}_{p} \cap \mathcal{E}_{p+1}$ is also given.

\begin{theo} Take for granted that next list of requirements (\ref{rel_k_deltaD_mD_k_prime}), (\ref{rel_kl_one_l_two_k_prime}),
(\ref{rel_Deltal_k}), (\ref{constraints_degree_coeff}), (\ref{defin_Cl}), (\ref{norm_bds_psi_n}), (\ref{kappa1_less_k})
(\ref{quotient_Q_RD_in_S}) and (\ref{cond_exist_uniq_wd}) is fulfilled. We fix an admissible set of data
$$ \underline{\mathcal{A}} = \{ \underline{\mathcal{E}} = \{ \mathcal{E}_{p} \}_{0 \leq p \leq \varsigma-1},
\underline{U} = \{ U_{\mathfrak{d}_{p}} \}_{0 \leq p \leq \varsigma-1},
\underline{S} = \{ S_{\mathfrak{d}_{p}} \}_{0 \leq p \leq \varsigma-1}, \mathcal{T}, D(0,r) \} $$
as described in Definition 7.

Then, whenever the inner radius $r_{Q,R_{D},1}>0$ (see \ref{quotient_Q_RD_in_S}) is selected large enough
and the constants $C_{\bf l}>0$ (see \ref{defin_Cl}) are chosen close enough to 0 for all ${\bf l} \in I$, a collection
$\{ u_{p}(t,z,\epsilon) \}_{0 \leq p \leq \varsigma - 1}$ of genuine solutions of (\ref{main_PDE_u}) can be set up. In particular, each function
$u_{p}(t,z,\epsilon)$ defines a bounded holomorphic application on the product $(\mathcal{T} \cap D(0,\sigma)) \times H_{\beta'} \times
\mathcal{E}_{p}$ for any given $0 < \beta' < \beta$ and suitable tiny radius $\sigma>0$. Furthermore, $u_{p}(t,z,\epsilon)$ can be
expressed as a Laplace transform of order $k$ and Fourier inverse transform
\begin{equation}
u_{p}(t,z,\epsilon) = \frac{k}{(2\pi)^{1/2}} \int_{-\infty}^{+\infty} \int_{L_{\gamma_p}} W^{\mathfrak{d}_{p}}(\tau,m,\epsilon)
\exp( -(\frac{\tau}{\epsilon t})^{k} ) e^{izm} \frac{d\tau}{\tau} dm \label{Laplace_up_theo}
\end{equation}
along a halfline $L_{\gamma_p} = \mathbb{R}_{+}e^{\sqrt{-1}\gamma_{p}} \subset S_{\mathfrak{d}_{p}} \cup \{ 0 \}$. The map
$(\tau,m) \mapsto W^{\mathfrak{d}_{p}}(\tau,m,\epsilon)$ represents a function that belongs to the Banach space
$E^{\mathfrak{d}_{p}}_{(\nu_{1},\beta,\mu,\kappa_{1})}$ for a well chosen $\nu_{1} > (1/T_{0})^{\kappa_{1}}$ for all
$\epsilon \in D(0,\epsilon_{0})$ and can itself be recast as a Laplace transform of order $k'$
\begin{equation}
W^{\mathfrak{d}_p}(\tau,m,\epsilon) = k' \int_{L_{\gamma_{p}'}} w^{\mathfrak{d}_{p}}(u,m,\epsilon)
\exp( -(\frac{u}{\tau})^{k'} ) \frac{du}{u}
\end{equation}
where the integration path $L_{\gamma_{p}'}$ is taken inside $U_{\mathfrak{d}_{p}} \cup \{ 0 \}$ and where
$(u,m) \mapsto w^{\mathfrak{d}_p}(u,m,\epsilon)$ stands for a function built within the Banach space
$F^{\mathfrak{d}_p}_{(\nu_{2},\beta,\mu,k_{1})}$ for all $\epsilon \in D(0,\epsilon_{0})$.

In addition, one can choose constants $K_{p},M_{p}>0$ and a radius $0 < \sigma' < \sigma$ (independent of $\epsilon$) with
\begin{equation}
 \sup_{t \in \mathcal{T} \cap D(0,\sigma'), z \in H_{\beta'}} |u_{p+1}(t,z,\epsilon) - u_{p}(t,z,\epsilon)| \leq
 K_{p} \exp( - \frac{M_{p}}{|\epsilon|^{\frac{kk'}{k+k'}}} ) \label{exp_flat_difference_up}
\end{equation}
for all $\epsilon \in \mathcal{E}_{p+1} \cap \mathcal{E}_{p}$, all $0 \leq p \leq \varsigma-1$, where by convention, we set
$u_{\varsigma}(t,z,\epsilon) = u_{0}(t,z,\epsilon)$.
\end{theo}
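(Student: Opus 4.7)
The plan is to chain together the existence results of Sections~4 and~5 to build each $u_p$, then to carry out a contour-deformation plus saddle-point estimate to extract the flatness (\ref{exp_flat_difference_up}).

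\textbf{Step one.} For each direction $\mathfrak{d}_p$, I would invoke Proposition~3 to obtain the unique fixed point $w^{\mathfrak{d}_p}(u,m,\epsilon) \in F^{\mathfrak{d}_p}_{(\nu_2,\beta,\mu,k_1)}$ of (\ref{main_conv_w}) and then Proposition~4 to produce its $k'$-Laplace transform $W^{\mathfrak{d}_p} \in E^{\mathfrak{d}_p}_{(\nu_1,\beta,\mu,\kappa_1)}$ solving (\ref{main_diff_conv_W}). Defining $u_p$ by (\ref{Laplace_up_theo}), the \emph{reverse} of the formal computation performed in Section~3 — Lemma~1 converting the action of $(T^{k+1}\partial_T)^{\delta_D}(T\partial_T)^{m_D}$ under the outer $k$-Laplace into $(k\tau^k)^{\delta_D}(\tau\partial_\tau)^{m_D}$ acting on $W^{\mathfrak{d}_p}$, together with the Fourier identities (\ref{Fourier_derivation}), (\ref{Fourier_product}) on the $z$-side — will show that the intermediate function $U^{\mathfrak{d}_p}(T,z,\epsilon)$ solves (\ref{main_PDE_U}) on $S_{\mathfrak{d}_p,\theta_{k,k'},\epsilon_0 r_\mathcal{T}}\times H_{\beta'}\times D(0,\epsilon_0)$; the rescaling $u_p(t,z,\epsilon)=U^{\mathfrak{d}_p}(\epsilon t,z,\epsilon)$ together with the inclusion (\ref{epsilon_t_in_Sdpthetak}) then delivers a genuine solution of (\ref{main_PDE_u}). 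Bounded holomorphy on $(\mathcal{T}\cap D(0,\sigma))\times H_{\beta'}\times \mathcal{E}_p$ follows by injecting the norm bound (\ref{norm_Wd_varrho}) into (\ref{Laplace_up_theo}): because $\kappa_1\leq k$, the outer kernel $\exp(-\delta_1 |\tau|^k/|\epsilon t|^k)$ dominates $\exp(\nu_1 |\tau|^{\kappa_1})$ for sufficiently small $|\epsilon t|$, and the Fourier integral in $m$ converges thanks to $\mu>1$ and $|\mathrm{Im}(z)|<\beta'$. The vanishing initial condition $u_p(0,z,\epsilon)\equiv 0$ comes for free since $w^{\mathfrak{d}_p}$ vanishes at $u=0$, hence so do $W^{\mathfrak{d}_p}$ at $\tau=0$ and $U^{\mathfrak{d}_p}$ at $T=0$.

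\textbf{Step two.} The preparatory observation for (\ref{exp_flat_difference_up}) is that the equation (\ref{main_conv_w}) is \emph{self-contained on the disc} $D(0,r)$: when $u\in D(0,r)$, every value of $w$ appearing on the right-hand side is at a point $u\,p^{1/k'}$ with $p\in[0,1]$, and all such points lie in $D(0,r)$. I would rerun the fixed-point argument of Proposition~3 in the Banach space of continuous functions on $D(0,r)\times\mathbb{R}$ endowed with the norm $\sup (1+|m|)^\mu e^{\beta|m|}|u|^{-1}|h(u,m)|$ to obtain uniqueness there; the restrictions $w^{\mathfrak{d}_p}|_{D(0,r)}$ are all such fixed points, hence they coincide. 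Denote this common restriction by $w$.

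\textbf{Step three.} I then express $u_{p+1}-u_p$ as a triple integral in $(u,\tau,m)$ with kernel $K(u,\tau,\epsilon t) = \exp(-(u/\tau)^{k'})\exp(-(\tau/\epsilon t)^k)$ and split each inner path $L_{\gamma'_p}$ at $|u|=r$. On the two disc-segments the integrand uses the common $w$ and a $u$-holomorphic kernel, so Cauchy's theorem applied to the pie-slice bounded by the two radii and the arc $C_r = \{re^{i\theta}: \theta\in [\gamma'_p,\gamma'_{p+1}]\}$ collapses the disc contribution into a single integral over $C_r$. What remains is the arc $C_r$ plus the two ray-tails, all lying in $\{|u|\geq r\}$. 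On each of these three pieces, the angular geometry built into Definition~7 gives $|\exp(-(u/\tau)^{k'})|\leq \exp(-\delta_3|u|^{k'}/|\tau|^{k'})$; combined with the bound $\varpi|u|\exp(\nu_2|u|^{k_1})$ on $w$ and the fact that $k_1<k'$, one obtains an inner estimate of the form $\exp(-A_1/|\tau|^{k'})$ with $A_1\propto r^{k'}$ for $|\tau|$ small enough. Inserted into the outer $k$-Laplace, the $\tau$-integrand is controlled by $\exp(-A_1/|\tau|^{k'} - \delta_4|\tau|^k/|\epsilon t|^k)$, whose minimum over $|\tau|>0$ is reached at $|\tau|_\ast \sim |\epsilon t|^{k/(k+k')}$ and equals, up to constants, $\exp(-M'/|\epsilon t|^{kk'/(k+k')})$. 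Restricting $t\in \mathcal{T}\cap D(0,\sigma')$ turns $|\epsilon t|$ into $|\epsilon|$ up to a constant, the $m$-integral contributes a finite factor $\int (1+|m|)^{-\mu}e^{(\beta'-\beta)|m|}\,dm$, and (\ref{exp_flat_difference_up}) follows with the claimed exponent $kk'/(k+k')$.

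\textbf{Main obstacle.} The delicate point throughout Step three is the \emph{simultaneous} angular control: one must verify that for every $\epsilon\in \mathcal{E}_p\cap \mathcal{E}_{p+1}$, every $t\in \mathcal{T}\cap D(0,\sigma')$, every $\tau\in L_{\gamma_p}$ and every $u$ on $C_r$ or on the tail beyond radius $r$, both $(u/\tau)^{k'}$ and $(\tau/\epsilon t)^k$ have uniformly positive real parts. The admissibility data of Definition~7 are precisely engineered to enforce these inequalities; extracting uniform constants $\delta_3,\delta_4,M'>0$ independent of $p$, and then verifying that the saddle-point combination actually produces the exponent $kk'/(k+k')$ rather than something weaker, is the real bookkeeping hurdle.
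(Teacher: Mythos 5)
Your Steps one and two reproduce the paper's construction faithfully: Proposition 3 gives $w^{\mathfrak{d}_p}$, Proposition 4 gives $W^{\mathfrak{d}_p}$, Lemma 1 and the Fourier identities undo the formal reduction of Section 3, and the observation that (\ref{main_conv_w}) is self-contained on $D(0,r)$ (so that all the $w^{\mathfrak{d}_p}$ share a common restriction $w$ there) is exactly the fact the paper invokes at the start of its Lemma 6. Your replacement of the paper's Watson-lemma/Gevrey-characterization argument (Lemmas 7 and 8) by a direct optimization of $\exp(-A_1/|\tau|^{k'}-\delta_4|\tau|^k/|\epsilon t|^k)$ in $|\tau|$ is a legitimate and arguably more transparent way to produce the exponent $kk'/(k+k')$.

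The genuine gap is at the opening of Step three. You write $u_{p+1}-u_p$ ``as a triple integral in $(u,\tau,m)$'' and then deform only the inner $u$-contour. But $u_{p+1}$ and $u_p$ are outer $k$-Laplace transforms of \emph{different} functions $W^{\mathfrak{d}_{p+1}}$, $W^{\mathfrak{d}_p}$ taken along \emph{different} halflines $L_{\gamma_{p+1}}\subset S_{\mathfrak{d}_{p+1}}$ and $L_{\gamma_p}\subset S_{\mathfrak{d}_p}$, and each $W^{\mathfrak{d}_l}$ carries its growth bound only on its own sector $S_{\mathfrak{d}_l}$. Your inner-contour argument (common germ $w$ on the disc, Cauchy on the pie-slice, decay of order $k'$) only produces a bound on the difference $W^{\mathfrak{d}_{p+1}}(\tau,\cdot)-W^{\mathfrak{d}_p}(\tau,\cdot)$ for $\tau$ in the narrow intersection $S_{\mathfrak{d}_{p+1}}\cap S_{\mathfrak{d}_p}$ and, moreover, only for $|\tau|$ below a threshold of the type (\ref{r1_small}) (otherwise $\exp(\nu_2|u|^{k_1})$ is not absorbed by $\exp(-\delta_1|u|^{k'}/|\tau|^{k'})$). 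So before you can use it you must reduce to a common outer path lying in that intersection and truncated at some finite radius $r_1$; this is precisely the paper's five-part decomposition (\ref{split_diff_up_five_parts}), whose extra pieces --- the two ray-tails $|\tau|\geq r_1$ in the original directions $\gamma_{p+1},\gamma_p$ and the two arcs of radius $r_1$ --- must be estimated separately (they give $\exp(-\delta_2 r_1^k/|\epsilon|^k)$, which suffices since $k>kk'/(k+k')$). Your write-up omits this outer deformation entirely, and the related slip shows up again when you optimize over ``$|\tau|>0$'' using an inner bound you have only established ``for $|\tau|$ small enough''. The repair is exactly the paper's $I_1$--$I_4$ bookkeeping; without it the argument as stated does not go through.
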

\begin{proof} At the onset, we depart from an admissible set of data $\underline{\mathcal{A}}$. Under the conditions asked in the statement
of Theorem 1, we can apply Proposition 4 in order to find, for all $0 \leq p \leq \varsigma -1$, a function
\begin{equation}
W^{\mathfrak{d}_{p}}(\tau,m,\epsilon) = k'\int_{L_{\gamma_{p}'}} w^{\mathfrak{d}_{p}}(u,m,\epsilon) \exp( -(\frac{u}{\tau})^{k'} ) \frac{du}{u}
\label{Wdp_Laplace_repres}
\end{equation}
written as a Laplace transform of order $k'$ in direction $\gamma_{p}'$ with
$L_{\gamma_{p}'}=\mathbb{R}_{+}e^{\sqrt{-1}\gamma_{p}'} \subset U_{\mathfrak{d}_{p}} \cup \{ 0 \}$ of a map
$w^{\mathfrak{d}_{p}}(u,m,\epsilon)$ which turns out to be holomorphic w.r.t $u$ on $U_{\mathfrak{d}_p} \cup D(0,r)$ and
w.r.t $\epsilon$ on $D(0,\epsilon_{0})$, continuous w.r.t $m$
on $\mathbb{R}$, with the property that a constant $\varpi^{\mathfrak{d}_p}>0$ can be singled out with
\begin{equation}
|w^{\mathfrak{d}_p}(u,m,\epsilon)| \leq \varpi^{\mathfrak{d}_p} (1 + |m|)^{-\mu} e^{-\beta |m|}|u| \exp( \nu_{2} |u|^{k_1} )
\label{bds_w_frac_dp}
\end{equation}
for all $u \in U_{\mathfrak{d}_p} \cup D(0,r)$, $m \in \mathbb{R}$, $\epsilon \in D(0,\epsilon_{0})$, where
$\nu_{2} = (1/T_{0})^{k_1}$. The function $W^{\mathfrak{d}_p}(\tau,m,\epsilon)$ is built in a way that it solves the first auxiliary
integro-differential equation (\ref{main_diff_conv_W}) on the domain $S_{\mathfrak{d}_{p}} \times \mathbb{R} \times D(0,\epsilon_{0})$ and is
submitted to the next bounds
\begin{equation}
|W^{\mathfrak{d}_{p}}(\tau,m,\epsilon)| \leq \varrho^{\mathfrak{d}_p} (1 + |m|)^{-\mu} e^{-\beta |m|} |\tau|
\exp( \nu_{1} |\tau|^{\kappa_1} ) \label{bds_W_frac_dp}
\end{equation}
whenever $\tau \in S_{\mathfrak{d}_{p}}$, $m \in \mathbb{R}$ and $\epsilon \in D(0,\epsilon_{0})$, for a well chosen
$\nu_{1} > (1/T_{0})^{\kappa_1}$.

We now turn back to the first step of the formal construction discussed in Section 3. We consider the next Laplace transform and Fourier
inverse transform
$$ U_{\gamma_{p}}(T,z,\epsilon) = \frac{k}{(2\pi)^{1/2}} \int_{-\infty}^{+\infty} \int_{L_{\gamma_p}}
W^{\mathfrak{d}_{p}}(\tau,m,\epsilon) \exp( -(\frac{\tau}{T})^{k} ) e^{izm} \frac{d\tau}{\tau} dm $$
along a halfline $L_{\gamma_p} = \mathbb{R}_{+}e^{\sqrt{-1}\gamma_{p}} \subset S_{\mathfrak{d}_{p}} \cup \{ 0 \}$. According to the
upper bounds (\ref{bds_W_frac_dp}) and the basic properties of Laplace and Fourier inverse transforms outlined in Section 2, we get that
$U_{\gamma_{p}}(T,z,\epsilon)$ defines\\
1) a holomorphic bounded function w.r.t $T$ on a sector $S_{\mathfrak{d}_{p},\theta_{k,k'},\upsilon}$ with bisecting direction $\mathfrak{d}_{p}$,
aperture $0 < \theta_{k,k'} < \frac{\pi}{k} + \mathrm{Ap}(S_{\mathfrak{d}_{p}})$, for some small radius $\upsilon>0$, where
$\mathrm{Ap}(S_{\mathfrak{d}_{p}})$ stands for the aperture of $S_{\mathfrak{d}_p}$,\\
2) a holomorphic bounded application w.r.t $z$ on $H_{\beta'}$,\\
3) a holomorphic bounded map w.r.t $\epsilon$ on $D(0,\epsilon_{0})$.\\
Furthermore, since $W^{\mathfrak{d}_p}(\tau,m,\epsilon)$ fulfills the equation (\ref{main_diff_conv_W}), Lemma 1 allows us to assert that
$U_{\gamma_p}(T,z,\epsilon)$ must solve the equation (\ref{main_PDE_U}) on
$S_{\mathfrak{d}_{p},\theta_{k,k'},\upsilon} \times H_{\beta'} \times D(0,\epsilon_{0})$. As a result, the function
$$ u_{p}(t,z,\epsilon) = U_{\gamma_p}(\epsilon t,z,\epsilon) $$
represents a bounded holomorphic function w.r.t $t$ on $\mathcal{T} \cap D(0,\sigma)$ for some $\sigma>0$ small enough,
$\epsilon \in \mathcal{E}_{p}$, $z \in H_{\beta'}$ for any given $0 < \beta' < \beta$, keeping in mind that the sectors
$\mathcal{E}_{p}$ and $\mathcal{T}$ suffer the restriction (\ref{epsilon_t_in_Sdpthetak}). Moreover, $u_{p}(t,z,\epsilon)$ solves the main
initial value problem (\ref{main_PDE_u}) on the domain $(\mathcal{T} \cap D(0,\sigma)) \times H_{\beta'} \times \mathcal{E}_{p}$, for all
$0 \leq p \leq \varsigma-1$.

In the second part of the proof, we concentrate on the exponential bounds (\ref{exp_flat_difference_up}). For $l=p,p+1$, the map
$\tau \mapsto W^{\mathfrak{d}_{l}}(\tau,m,\epsilon)\exp( -(\frac{\tau}{\epsilon t})^{k} )/\tau$ is holomorphic on the sector
$S_{\mathfrak{d}_{l}}$. As a result, we can deform each straight halfline $L_{\gamma_{l}}$, for $l=p,p+1$, into the union of three pieces with suitable orientation,
described as follows:\\
a) a halfline $L_{\gamma_{l},r_{1}} = [r_{1},+\infty) \exp( \sqrt{-1}\gamma_{l} )$ for a given real number $r_{1}>0$,\\
b) an arc of circle with radius $r_{1}$ denoted $C_{r_{1},\gamma_{l},\gamma_{p,p+1}}$ joining the point $r_{1}\exp( \sqrt{-1}\gamma_{p,p+1})$
which is taken inside the intersection $S_{\mathfrak{d}_{p}} \cap S_{\mathfrak{d}_{p+1}}$
(that is assumed to be non empty, see Definition 7, 2.1) to the halfline $L_{\gamma_{l},r_{1}}$,\\
c) a segment $L_{\gamma_{p,p+1},0,r_{1}} = [0,r_{1}] \exp( \sqrt{-1} \gamma_{p,p+1} )$.

See Figure~\ref{fig1} for the configuration of the deformation of the integration paths.

\begin{figure}[h]
	\centering
		\includegraphics[width=0.48\textwidth]{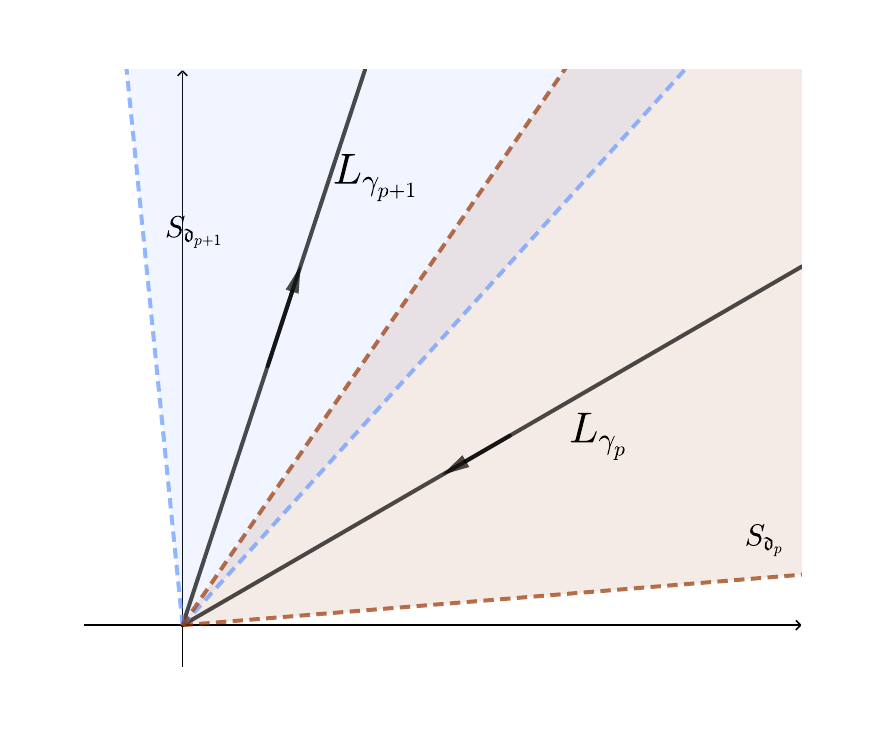}
		\includegraphics[width=0.48\textwidth]{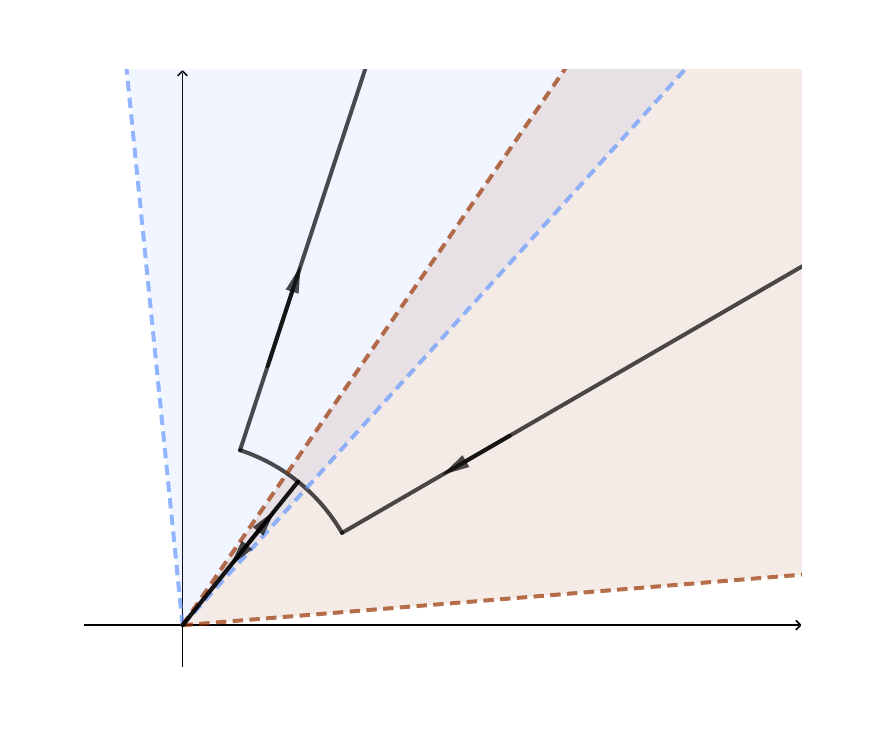}
	\caption{Initial (left) and deformation (right) of the integration paths}
			\label{fig1}
\end{figure}

We notice that the deformation paths are similar to those performed in the proof of Theorem 1 from \cite{lama2}.
Consequently, we are able to split the difference $u_{p+1}-u_{p}$ into five parts, namely
\begin{multline}
u_{p+1}(t,z,\epsilon) - u_{p}(t,z,\epsilon) = \frac{k}{(2\pi)^{1/2}}
\int_{-\infty}^{+\infty} \int_{L_{\gamma_{p+1}},r_{1}} W^{\mathfrak{d}_{p+1}}(\tau,m,\epsilon)
\exp( -(\frac{\tau}{\epsilon t})^{k} ) e^{izm} \frac{d\tau}{\tau} dm\\
-
\frac{k}{(2\pi)^{1/2}}
\int_{-\infty}^{+\infty} \int_{L_{\gamma_{p}},r_{1}} W^{\mathfrak{d}_{p}}(\tau,m,\epsilon)
\exp( -(\frac{\tau}{\epsilon t})^{k} ) e^{izm} \frac{d\tau}{\tau} dm \\
+
\frac{k}{(2\pi)^{1/2}}
\int_{-\infty}^{+\infty} \int_{C_{r_{1},\gamma_{p+1},\gamma_{p,p+1}}} W^{\mathfrak{d}_{p+1}}(\tau,m,\epsilon)
\exp( -(\frac{\tau}{\epsilon t})^{k} ) e^{izm} \frac{d\tau}{\tau} dm\\
- \frac{k}{(2\pi)^{1/2}}
\int_{-\infty}^{+\infty} \int_{C_{r_{1},\gamma_{p},\gamma_{p,p+1}}} W^{\mathfrak{d}_{p}}(\tau,m,\epsilon)
\exp( -(\frac{\tau}{\epsilon t})^{k} ) e^{izm} \frac{d\tau}{\tau} dm\\
+ \frac{k}{(2\pi)^{1/2}}
\int_{-\infty}^{+\infty} \int_{L_{\gamma_{p,p+1},0,r_{1}}}
\left( W^{\mathfrak{d}_{p+1}}(\tau,m,\epsilon) - W^{\mathfrak{d}_{p}}(\tau,m,\epsilon) \right)
\exp( -(\frac{\tau}{\epsilon t})^{k} ) e^{izm} \frac{d\tau}{\tau} dm \label{split_diff_up_five_parts}
\end{multline}
We first provide estimates for the quantity
$$
I_{1} = \left|  \frac{k}{(2\pi)^{1/2}}
\int_{-\infty}^{+\infty} \int_{L_{\gamma_{p+1}},r_{1}} W^{\mathfrak{d}_{p+1}}(\tau,m,\epsilon)
\exp( -(\frac{\tau}{\epsilon t})^{k} ) e^{izm} \frac{d\tau}{\tau} dm   \right|
$$
Observe that the direction $\gamma_{p+1}$ (which relies on $\epsilon t$) is chosen in a way that
$$ \cos(k(\gamma_{p+1} - \mathrm{arg}(\epsilon t))) \geq \delta_{1} $$
for all $\epsilon \in \mathcal{E}_{p+1} \cap \mathcal{E}_{p}$, all $t \in \mathcal{T}$, for some fixed $\delta_{1}>0$. Owing to the
bounds (\ref{bds_W_frac_dp}), we deduce
\begin{multline}
I_{1} \leq \frac{k}{(2\pi)^{1/2}} \int_{-\infty}^{+\infty} \int_{r_{1}}^{+\infty}
\varrho^{\mathfrak{d}_{p+1}} (1 + |m|)^{-\mu} e^{-\beta |m|} r \exp( \nu_{1} r^{\kappa_{1}} )\\
\times 
\exp\left( -\frac{ \cos( k(\gamma_{p+1} - \mathrm{arg}(\epsilon t)) ) }{|\epsilon t|^{k}} r^{k} \right)
\exp(-m \mathrm{Im}(z) ) \frac{dr}{r} dm \leq \frac{k \varrho^{\mathfrak{d}_{p+1}}}{(2\pi)^{1/2}}
\int_{-\infty}^{+\infty} e^{-(\beta - \beta')|m|} dm\\
\times 
\int_{r_1}^{+\infty} \exp \left( - (\frac{\delta_{1}}{|t|^{k}} - \nu_{1}|\epsilon|^{k}r^{\kappa_{1}-k}) \frac{r^k}{|\epsilon|^{k}} \right)
dr \leq \frac{2k \varrho^{\mathfrak{d}_{p+1}}}{(2\pi)^{1/2}} \int_{0}^{+\infty} e^{-(\beta - \beta')m} dm\\
\times 
\int_{r_1}^{+\infty} \exp \left( - (\frac{\delta_{1}}{|t|^{k}} - \nu_{1}|\epsilon|^{k}r_{1}^{\kappa_{1}-k}) \frac{r^k}{|\epsilon|^{k}} \right)
dr \leq \frac{2k \varrho^{\mathfrak{d}_{p+1}}}{(2\pi)^{1/2}(\beta - \beta')}
\int_{r_{1}}^{+\infty} \left( \frac{|\epsilon|^{k}}{\frac{\delta_{1}}{|t|^{k}} - \nu_{1} |\epsilon|^{k} r_{1}^{\kappa_{1} - k}}
\frac{1}{k r_{1}^{k-1}} \right)\\
\times \frac{ \frac{\delta_1}{|t|^{k}} - \nu_{1} |\epsilon|^{k} r_{1}^{\kappa_{1} - k} }{|\epsilon|^k}kr^{k-1}
\exp \left( -(\frac{\delta_{1}}{|t|^{k}} - \nu_{1}|\epsilon|^{k} r_{1}^{\kappa_{1} - k}) \frac{r^k}{|\epsilon|^{k}} \right) dr
\leq \frac{2k \varrho^{\mathfrak{d}_{p+1}}}{(2\pi)^{1/2}(\beta - \beta')}\\
\times 
\frac{|\epsilon|^{k}}{\frac{\delta_{1}}{|t|^{k}} - \nu_{1} |\epsilon|^{k} r_{1}^{\kappa_{1} - k}}\frac{1}{k r_{1}^{k-1}}
\exp \left( -( \frac{\delta_{1}}{|t|^{k}} - \nu_{1}|\epsilon|^{k}r_{1}^{\kappa_{1}-k}) \frac{r_{1}^{k}}{|\epsilon|^{k}} \right)\\
\leq \frac{2k \varrho^{\mathfrak{d}_{p+1}}}{(2\pi)^{1/2}(\beta - \beta')} \frac{|\epsilon|^{k}}{\delta_{2} k r_{1}^{k-1}}
\exp( - \delta_{2} \frac{r_{1}^{k}}{|\epsilon|^{k}} ) \label{bds_I1}
\end{multline}
for all $t \in \mathcal{T}$ and $|\mathrm{Im}(z)| \leq \beta'$ under the requirement that
\begin{equation}
|t| < (\frac{\delta_{1}}{\delta_{2} + \nu_{1} \epsilon_{0}^{k} r_{1}^{\kappa_{1}-k}})^{1/k} \label{t_small_in_mathcalT}
\end{equation}
for some given $\delta_{2}>0$, for all $\epsilon \in \mathcal{E}_{p+1} \cap \mathcal{E}_{p}$.

In a similar manner, we can supply upper bounds for the next term
$$
I_{2} = \left|  \frac{k}{(2\pi)^{1/2}}
\int_{-\infty}^{+\infty} \int_{L_{\gamma_{p}},r_{1}} W^{\mathfrak{d}_{p}}(\tau,m,\epsilon)
\exp( -(\frac{\tau}{\epsilon t})^{k} ) e^{izm} \frac{d\tau}{\tau} dm   \right|
$$
Indeed, the direction $\gamma_{p}$ (which depends on $\epsilon t$) is taken in order that
$$ \cos( k(\gamma_{p} - \mathrm{arg}(\epsilon t)) ) \geq \delta_{1} $$
for all $\epsilon \in \mathcal{E}_{p+1} \cap \mathcal{E}_{p}$, all $t \in \mathcal{T}$, for some fixed $\delta_{1}>0$. Again with the
estimates (\ref{bds_W_frac_dp}), the same steps as above (\ref{bds_I1}) yield
\begin{equation}
I_{2} \leq \frac{2k \varrho^{\mathfrak{d}_{p}}}{(2\pi)^{1/2}(\beta - \beta')} \frac{|\epsilon|^{k}}{\delta_{2} k r_{1}^{k-1}}
\exp( - \delta_{2} \frac{r_{1}^{k}}{|\epsilon|^{k}} ) \label{bds_I2}
\end{equation}
provided that $t \in \mathcal{T}$ and $|\mathrm{Im}(z)| \leq \beta'$ under the constraint (\ref{t_small_in_mathcalT}) for some $\delta_{2}>0$.

In the next step, we control the first integral along an arc of circle
$$
I_{3} = \left| \frac{k}{(2\pi)^{1/2}}
\int_{-\infty}^{+\infty} \int_{C_{r_{1},\gamma_{p+1},\gamma_{p,p+1}}} W^{\mathfrak{d}_{p+1}}(\tau,m,\epsilon)
\exp( -(\frac{\tau}{\epsilon t})^{k} ) e^{izm} \frac{d\tau}{\tau} dm \right|
$$
By construction, the arc of circle $C_{r_{1},\gamma_{p+1},\gamma_{p,p+1}}$ is built in order that
$$ \cos( k(\theta - \mathrm{arg}(\epsilon t)) ) \geq \delta_{1} $$
for all $\theta \in [\gamma_{p+1},\gamma_{p,p+1}]$ (if $\gamma_{p+1} < \gamma_{p,p+1}$) or $\theta \in [\gamma_{p,p+1},\gamma_{p+1}]$
(if $\gamma_{p,p+1} < \gamma_{p+1}$), whenever $t \in \mathcal{T}$, $\epsilon \in \mathcal{E}_{p} \cap \mathcal{E}_{p+1}$,
for some fixed $\delta_{1}>0$. Keeping in mind (\ref{bds_W_frac_dp}), we obtain
\begin{multline}
I_{3} \leq \frac{k}{(2\pi)^{1/2}} \int_{-\infty}^{+\infty} \left| \int_{\gamma_{p+1}}^{\gamma_{p,p+1}}
\varrho^{\mathfrak{d}_{p+1}} (1 + |m|)^{-\mu} e^{-\beta |m|} r_{1} \exp( \nu_{1} r_{1}^{\kappa_1} ) \right.\\
\times 
\left. \exp \left( - \frac{\mathrm{cos}(k(\theta - \mathrm{arg}(\epsilon t)))}{|\epsilon t|^{k}} r_{1}^{k} \right)
e^{-m \mathrm{Im}(z)} d\theta \right| dm \leq \frac{k \varrho^{\mathfrak{d}_{p+1}}}{(2\pi)^{1/2}}
\int_{-\infty}^{+\infty} e^{-(\beta - \beta')|m|} dm
|\gamma_{p+1} - \gamma_{p,p+1}| \\
\times r_{1} \exp \left( - (\frac{\delta_{1}}{|t|^{k}} - \nu_{1} |\epsilon|^{k} r_{1}^{\kappa_{1} - k} )
\frac{r_{1}^{k}}{|\epsilon|^{k}} \right) \leq \frac{2 k \varrho^{\mathfrak{d}_{p+1}}}{(2\pi)^{1/2}(\beta - \beta')}
|\gamma_{p+1} - \gamma_{p,p+1}|r_{1} \exp( -\delta_{2} \frac{r_{1}^{k}}{|\epsilon|^{k}} ) \label{bds_I3}
\end{multline}
for all $t \in \mathcal{T}$, $|\mathrm{Im}(z)| \leq \beta'$ submitted to (\ref{t_small_in_mathcalT}) for some fixed $\delta_{2}>0$,
whenever $\epsilon \in \mathcal{E}_{p+1} \cap \mathcal{E}_{p}$.

The second integral along an arc of circle
$$
I_{4} = \left| \frac{k}{(2\pi)^{1/2}}
\int_{-\infty}^{+\infty} \int_{C_{r_{1},\gamma_{p},\gamma_{p,p+1}}} W^{\mathfrak{d}_{p}}(\tau,m,\epsilon)
\exp( -(\frac{\tau}{\epsilon t})^{k} ) e^{izm} \frac{d\tau}{\tau} dm \right|
$$
can be estimated from above in a similar way. Namely, the arc of circle $C_{r_{1},\gamma_{p},\gamma_{p,p+1}}$ is again shaped in order that
$$ \mathrm{cos}(k (\theta - \mathrm{arg}(\epsilon t)) ) \geq \delta_{1} $$
for all $\theta \in [\gamma_{p},\gamma_{p,p+1}]$ (if $\gamma_{p} < \gamma_{p,p+1}$) or $\theta \in [\gamma_{p,p+1},\gamma_{p}]$
(if $\gamma_{p,p+1} < \gamma_{p}$), provided that $t \in \mathcal{T}$, $\epsilon \in \mathcal{E}_{p} \cap \mathcal{E}_{p+1}$,
for some fixed $\delta_{1}>0$. The bounds (\ref{bds_W_frac_dp}) along with the same arguments as above (\ref{bds_I3}) yield
\begin{equation}
I_{4} \leq \frac{2 k \varrho^{\mathfrak{d}_{p}}}{(2\pi)^{1/2}(\beta - \beta')}
|\gamma_{p} - \gamma_{p,p+1}|r_{1} \exp( -\delta_{2} \frac{r_{1}^{k}}{|\epsilon|^{k}} ) \label{bds_I4}
\end{equation}
for all $t \in \mathcal{T}$, $|\mathrm{Im}(z)| \leq \beta'$ obeying (\ref{t_small_in_mathcalT}) for some fixed $\delta_{2}>0$, whenever
$\epsilon \in \mathcal{E}_{p+1} \cap \mathcal{E}_{p}$.

In the ultimate part of the proof, it remains to examine the integral along the segment
$$
I_{5} = \left| \frac{k}{(2\pi)^{1/2}}
\int_{-\infty}^{+\infty} \int_{L_{\gamma_{p,p+1},0,r_{1}}}
\left( W^{\mathfrak{d}_{p+1}}(\tau,m,\epsilon) - W^{\mathfrak{d}_{p}}(\tau,m,\epsilon) \right)
\exp( -(\frac{\tau}{\epsilon t})^{k} ) e^{izm} \frac{d\tau}{\tau} dm \right|
$$
We need some preliminary ground work. We depart from a lemma that displays exponential upper bounds for the difference
$W^{\mathfrak{d}_{p+1}} - W^{\mathfrak{d}_{p}}$.
\begin{lemma} For every $0 \leq p \leq \varsigma-1$, we can single out two constants $K_{p}^{W},M_{p}^{W}>0$ such that
\begin{equation}
|W^{\mathfrak{d}_{p+1}}(\tau,m,\epsilon) - W^{\mathfrak{d}_{p}}(\tau,m,\epsilon)| \leq K_{p}^{W}
\exp( - \frac{M_{p}^{W}}{|\tau|^{k'}} ) (1 + |m|)^{-\mu} e^{-\beta |m|} \label{bds_difference_Wdp} 
\end{equation}
for all $\epsilon \in D(0,\epsilon_{0})$, all $m \in \mathbb{R}$, all $\tau \in S_{\mathfrak{d}_{p+1}} \cap S_{\mathfrak{d}_{p}} \cap D(0,r_{1})$
provided that
\begin{equation}
0 < r_{1} \leq (\frac{ \delta_{1} - \delta_{2}}{\nu_{2} (r/2)^{k_{1}-k'}})^{1/k'} \label{r1_small} 
\end{equation}
for some fixed $0 < \delta_{2} < \delta_{1}$, with the convention that $W^{\mathfrak{d}_{\varsigma}} = W^{\mathfrak{d}_{0}}$.
\end{lemma}
\begin{proof} By construction, we first notice that all the maps $u \mapsto w^{\mathfrak{d}_{p}}(u,m,\epsilon)$, $0 \leq p \leq \varsigma-1$,
are analytic continuations on the sector $U_{\mathfrak{d}_p}$ of a unique holomorphic function that we call $u \mapsto w(u,m,\epsilon)$
on the disc $D(0,r)$ which fulfills the same bounds (\ref{bds_w_frac_dp}). Furthermore, the application
$u \mapsto w(u,m,\epsilon) \exp( -(u/\tau)^{k'} )/u$ is holomorphic on $D(0,r)$ when
$\tau \in S_{\mathfrak{d}_{p+1}} \cap S_{\mathfrak{d}_{p}}$ and its integral is thus vanishing along an oriented path shaped as the union of\\
a) a segment departing from 0 to $(r/2)\exp(\sqrt{-1}\gamma_{p+1}')$\\
b) an arc of circle with radius $r/2$ joining the points $(r/2)\exp(\sqrt{-1}\gamma_{p+1}')$ and
$(r/2)\exp(\sqrt{-1}\gamma_{p}')$\\
c) a segment connecting $(r/2)\exp(\sqrt{-1}\gamma_{p}')$ and the origin.

As a result, by turning back to the integral representations (\ref{Wdp_Laplace_repres}) of $W^{\mathfrak{d}_{p+1}}$ and
$W^{\mathfrak{d}_{p}}$, we can recast the difference $W^{\mathfrak{d}_{p+1}} - W^{\mathfrak{d}_{p}}$ as a sum of three integrals
\begin{multline}
W^{\mathfrak{d}_{p+1}}(\tau,m,\epsilon) - W^{\mathfrak{d}_{p}}(\tau,m,\epsilon) = k'\int_{L_{\gamma_{p+1}',r/2}}
w^{\mathfrak{d}_{p+1}}(u,m,\epsilon) \exp( -(\frac{u}{\tau})^{k'} ) \frac{du}{u} \\
-
k'\int_{L_{\gamma_{p}',r/2}}
w^{\mathfrak{d}_{p}}(u,m,\epsilon) \exp( -(\frac{u}{\tau})^{k'} ) \frac{du}{u} + k'\int_{C_{r/2,\gamma_{p}',\gamma_{p+1}'}}
w(u,m,\epsilon) \exp( -(\frac{u}{\tau})^{k'} ) \frac{du}{u} \label{diff_W_dp_3_integrals}
\end{multline}
where the integrations paths are two halflines and an arc of circle staying away from the origin that are described as follows
\begin{multline*}
L_{\gamma_{p+1}',r/2} = [r/2,+\infty) \exp( \sqrt{-1}\gamma_{p+1}' ),
L_{\gamma_{p}',r/2} = [r/2,+\infty) \exp( \sqrt{-1}\gamma_{p}' ),\\
C_{r/2,\gamma_{p}',\gamma_{p+1}'} = \{ \frac{r}{2} \exp( \sqrt{-1} \theta ) : \theta \in [\gamma_{p}',\gamma_{p+1}'] \}
\end{multline*}

See Figure~\ref{fig2} for the configuration of the deformation of the integration paths.

\begin{figure}[h]
	\centering
		\includegraphics[width=0.48\textwidth]{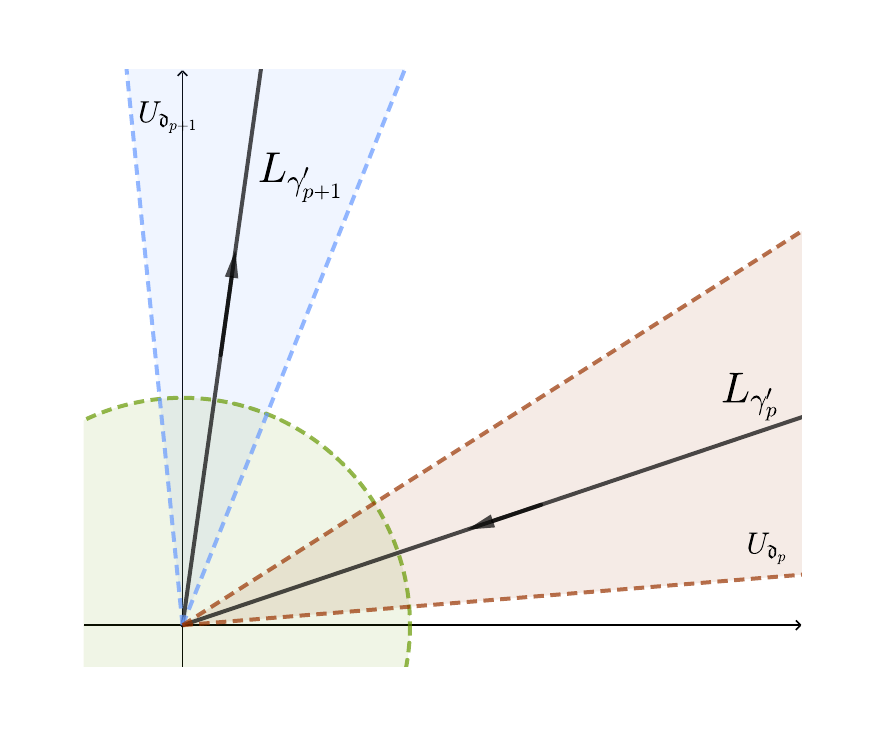}
		\includegraphics[width=0.48\textwidth]{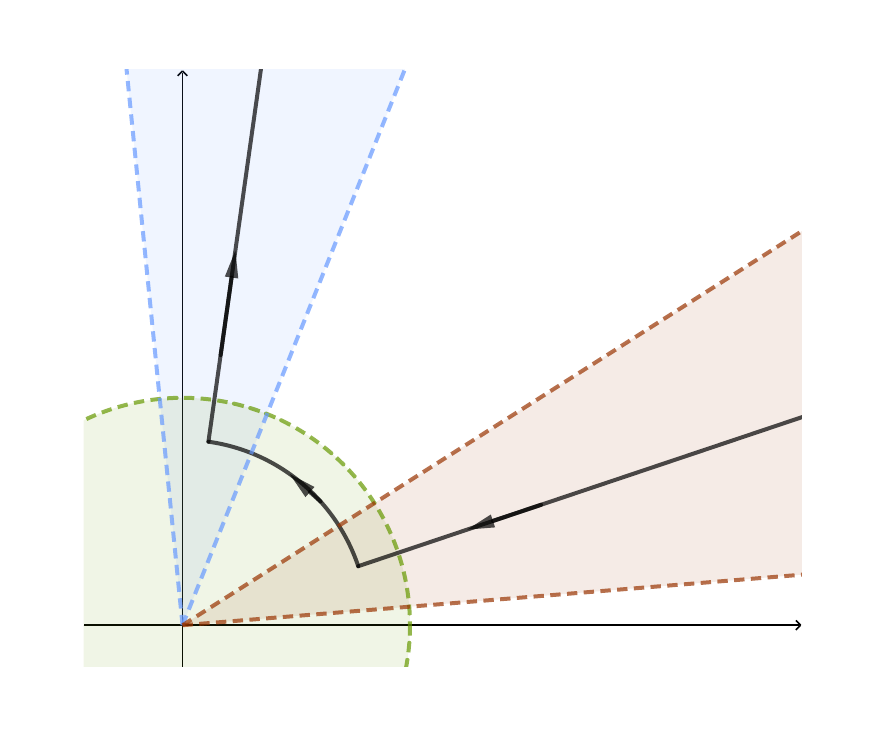}
	\caption{Initial (left) and deformation (right) of the integration paths}
			\label{fig2}
\end{figure}

We deal with the first integral along a halfline
$$ J_{1} = \left| k'\int_{L_{\gamma_{p+1}',r/2}}
w^{\mathfrak{d}_{p+1}}(u,m,\epsilon) \exp( -(\frac{u}{\tau})^{k'} ) \frac{du}{u} \right| $$
The direction $\gamma_{p+1}'$ (which depends on $\tau$) is suitably chosen in order that
$$ \cos( k'(\gamma_{p+1}' - \mathrm{arg}(\tau))) \geq \delta_{1} $$
for all $\tau \in S_{\mathfrak{d}_{p+1}} \cap S_{\mathfrak{d}_{p}}$, for some fixed $\delta_{1}>0$. Bearing in mind the estimates
(\ref{bds_w_frac_dp}) leads to
\begin{multline}
J_{1} \leq k' \int_{r/2}^{+\infty} \varpi^{\mathfrak{d}_{p+1}} (1 + |m|)^{-\mu} e^{-\beta|m|}
s \exp( \nu_{2} s^{k_1} ) \exp \left( - \frac{\cos( k'(\gamma_{p+1}' - \mathrm{arg}(\tau)))}{|\tau|^{k'}} s^{k'} \right) \frac{ds}{s}\\
\leq k' \varpi^{\mathfrak{d}_{p+1}} (1 + |m|)^{-\mu} e^{-\beta |m|}
\int_{r/2}^{+\infty} \exp \left( -\frac{s^{k'}}{|\tau|^{k'}}( \delta_{1} - \nu_{2}|\tau|^{k'}s^{k_{1} - k'} ) \right) ds\\
\leq k' \varpi^{\mathfrak{d}_{p+1}} (1 + |m|)^{-\mu} e^{-\beta |m|}
\int_{r/2}^{+\infty} \exp \left( -\frac{s^{k'}}{|\tau|^{k'}}( \delta_{1} - \nu_{2}|\tau|^{k'}(r/2)^{k_{1} - k'} ) \right) ds\\
\leq k' \varpi^{\mathfrak{d}_{p+1}} (1 + |m|)^{-\mu} e^{-\beta |m|}
\int_{r/2}^{+\infty} \exp \left( -\frac{s^{k'}}{|\tau|^{k'}} \delta_{2} \right) ds\\
\leq k' \varpi^{\mathfrak{d}_{p+1}} (1 + |m|)^{-\mu} e^{-\beta |m|} \frac{|\tau|^{k'}}{\delta_{2}}
\frac{1}{k'(r/2)^{k'-1}} \int_{r/2}^{+\infty} \frac{\delta_{2}}{|\tau|^{k'}} k' s^{k'-1}
\exp \left( -\frac{s^{k'}}{|\tau|^{k'}} \delta_{2} \right) ds\\
= k' \varpi^{\mathfrak{d}_{p+1}} (1 + |m|)^{-\mu} e^{-\beta |m|} \frac{|\tau|^{k'}}{\delta_{2}}
\frac{1}{k'(r/2)^{k'-1}} \exp \left( - \frac{ (r/2)^{k'} }{|\tau|^{k'}} \delta_{2} \right) \label{bds_J1}
\end{multline}
for all $\epsilon \in D(0,\epsilon_{0})$, all $m \in \mathbb{R}$, provided that $\tau \in S_{\mathfrak{d}_{p+1}} \cap S_{\mathfrak{d}_{p}}$ with
\begin{equation}
 |\tau| \leq ( \frac{\delta_{1} - \delta_{2}}{\nu_{2}(r/2)^{k_{1}-k'}} )^{1/k'} \label{tau_small_in_Sdp}
\end{equation}
for given $0 < \delta_{2} < \delta_{1}$.

In a similar manner, we supply bounds for the second integral over a halfline
$$ J_{2} = \left|  k'\int_{L_{\gamma_{p}',r/2}}
w^{\mathfrak{d}_{p}}(u,m,\epsilon) \exp( -(\frac{u}{\tau})^{k'} ) \frac{du}{u} \right| $$
Indeed, the direction $\gamma_{p}'$ (that relies on $\tau$) is properly chosen in order that
$$ \cos( k'(\gamma_{p}' - \mathrm{arg}(\tau)) ) \geq \delta_{1} $$
for all $\tau \in S_{\mathfrak{d}_{p+1}} \cap S_{\mathfrak{d}_{p}}$, for some fixed $\delta_{1}>0$. The use of (\ref{bds_w_frac_dp}) together
with a list of bounds akin to (\ref{bds_J1}) allows
\begin{equation}
J_{2} \leq k' \varpi^{\mathfrak{d}_{p}} (1 + |m|)^{-\mu} e^{-\beta |m|} \frac{|\tau|^{k'}}{\delta_{2}}
\frac{1}{k'(r/2)^{k'-1}} \exp \left( - \frac{ (r/2)^{k'} }{|\tau|^{k'}} \delta_{2} \right) \label{bds_J2}
\end{equation}
to hold whenever $\epsilon \in D(0,\epsilon_{0})$, $m \in \mathbb{R}$, $\tau \in S_{\mathfrak{d}_{p+1}} \cap S_{\mathfrak{d}_{p}}$
restricted to (\ref{tau_small_in_Sdp}) for given $0 < \delta_{2} < \delta_{1}$.

In the final part of the lemma, we evaluate the third integral along an arc of circle
$$ J_{3} = \left|  k'\int_{C_{r/2,\gamma_{p}',\gamma_{p+1}'}}
w(u,m,\epsilon) \exp( -(\frac{u}{\tau})^{k'} ) \frac{du}{u} \right| $$
The circle $C_{r/2,\gamma_{p}',\gamma_{p+1}'}$ satisfies the lower bounds
$$ \cos( k'( \theta - \mathrm{arg}(\tau))) \geq \delta_{1} $$
for all $\theta \in [\gamma_{p}',\gamma_{p+1}']$ (if $\gamma_{p}'<\gamma_{p+1}'$) or $\theta \in [\gamma_{p+1}',\gamma_{p}']$
(if $\gamma_{p+1}' < \gamma_{p}'$) granting that $\tau \in S_{\mathfrak{d}_{p+1}} \cap S_{\mathfrak{d}_{p}}$. Again, the estimates (\ref{bds_w_frac_dp}) lead to
\begin{multline}
J_{3} \leq k' | \int_{\gamma_{p}'}^{\gamma_{p+1}'} \max( \varpi^{\mathfrak{d}_{p}}, \varpi^{\mathfrak{d}_{p+1}} )
(1 + |m|)^{-\mu} e^{-\beta |m|} \frac{r}{2} \exp( \nu_{2} (\frac{r}{2})^{k_1} )\\
\times 
\exp \left( - \frac{\cos(k'(\theta - \mathrm{arg}(\tau)))}{|\tau|^{k'}} (\frac{r}{2})^{k'} \right) d\theta | \leq
k' \max( \varpi^{\mathfrak{d}_{p}}, \varpi^{\mathfrak{d}_{p+1}} ) |\gamma_{p+1}' - \gamma_{p}'|
(1 + |m|)^{-\mu} e^{-\beta |m|} \frac{r}{2}\\
\times \exp \left( -\frac{(r/2)^{k'}}{|\tau|^{k'}} ( \delta_{1} - |\tau|^{k'}\nu_{2}(r/2)^{k_{1}-k'}) \right) \leq
k' \max( \varpi^{\mathfrak{d}_{p}}, \varpi^{\mathfrak{d}_{p+1}} ) |\gamma_{p+1}' - \gamma_{p}'|
(1 + |m|)^{-\mu} e^{-\beta |m|} \frac{r}{2}\\
\times \exp ( -\frac{(r/2)^{k'}}{|\tau|^{k'}} \delta_{2} ) \label{bds_J3}
\end{multline}
for all $\epsilon \in D(0,\epsilon_{0})$, $m \in \mathbb{R}$ and $\tau \in S_{\mathfrak{d}_{p+1}} \cap S_{\mathfrak{d}_{p}}$
withstanding (\ref{tau_small_in_Sdp}) for given $0 < \delta_{2} < \delta_{1}$.

By collecting the above inequalities (\ref{bds_J1}), (\ref{bds_J2}) and (\ref{bds_J3}) applied to the decomposition
(\ref{diff_W_dp_3_integrals}), we reach the forecast bounds (\ref{bds_difference_Wdp}).
\end{proof}
From now on, we assume that the real number $r_{1}>0$ chosen above in the deformation a)b)c) of the straight halflines $L_{\gamma_{l}}$, $l=p,p+1$
is submitted to the restriction (\ref{r1_small}). As observed above, the direction $\gamma_{p,p+1}$ fulfills the lower estimates
$$ \cos( k(\gamma_{p,p+1} - \mathrm{arg}(\epsilon t))) \geq \delta_{1} $$
provided that $t \in \mathcal{T}$, $\epsilon \in \mathcal{E}_{p} \cap \mathcal{E}_{p+1}$ for some fixed $\delta_{1}>0$. The upper bounds 
(\ref{bds_difference_Wdp}) allow us to show that
\begin{multline}
I_{5} \leq \frac{k}{(2\pi)^{1/2}} \int_{-\infty}^{+\infty} \int_{0}^{r_1}
K_{p}^{W} (1 + |m|)^{-\mu} e^{-\beta |m|} \exp( - \frac{M_{p}^{W}}{r^{k'}} )\\
\times
\exp \left( - \frac{\cos(k (\gamma_{p,p+1} - \mathrm{arg}(\epsilon t)) )}{|\epsilon t|^{k}} r^{k} \right)
e^{-m \mathrm{Im}(z)} \frac{dr}{r} dm \leq \frac{2k K_{p}^{W}}{(2\pi)^{1/2}(\beta - \beta')} \tilde{I}_{5}(\epsilon t)
\label{bds_I5_1}
\end{multline}
where
\begin{equation}
\tilde{I}_{5}(\epsilon t)=\int_{0}^{r_1} \exp( -\frac{M_{p}^{W}}{r^{k'}} ) \exp( - \frac{\delta_{1}}{|\epsilon t|^{k}} r^{k} ) \frac{dr}{r}
\label{defin_tilde_I5}
\end{equation}
for all $\epsilon \in \mathcal{E}_{p} \cap \mathcal{E}_{p+1}$, $t \in \mathcal{T}$, $|\mathrm{Im}(z)| \leq \beta'$.

The study of estimates for $\tilde{I}_{5}(\epsilon t)$ as $\epsilon$ comes close to 0 has already been done in the proof of Theorem 1 from
our previous work \cite{lama2}. However, we display the full details of the arguments in order to keep them self contained. Namely, the bounds lean on the next two lemmas.

\begin{lemma}[Watson's Lemma. Exercise 4, page 16 in~\cite{ba}]

Let $b>0$ and $f:[0,b] \rightarrow \mathbb{C} $ be a continuous function having the formal expansion
$\sum_{n \geq 0}a_{n}t^n \in \mathbb{C}[[t]]$ as its asymptotic expansion of Gevrey order $\kappa>0$ at 0, meaning there exist
$C,M>0$ such that
$$\left |f(t)-\sum_{n=0}^{N-1}a_{n}t^n \right| \leq CM^{N}N!^{\kappa}|t|^{N},$$
for every $N \geq 1$ and $t\in [0,\delta]$, for some $0<\delta<b$. Then, the function
$$I(x)=\int_{0}^{b}f(s)e^{-\frac{s}{x}}ds$$
admits the formal power series $\sum_{n \geq 0} a_{n}n!x^{n+1} \in \mathbb{C}[[x]]$ as its asymptotic expansion
of Gevrey order $\kappa+1$ at 0, it is to say, there exist $\tilde{C},\tilde{K}>0$ such that
$$\left |I(x)-\sum_{n=0}^{N-1}a_{n}n!x^{n+1}\right| \leq \tilde{C}\tilde{K}^{N+1}(N+1)!^{1+\kappa}|x|^{N+1},$$
for every $N \geq 0$ and $x \in [0,\delta']$ for some $0<\delta'<b$.
\end{lemma}
\begin{lemma}[Exercise 3, page 18 in~\cite{ba}]
Let $\delta,q>0$, and $\psi:[0,\delta] \rightarrow \mathbb{C} $ be a continuous function. The following assertions are equivalent:
\begin{enumerate}
\item There exist $C,M>0$ such that $|\psi(x)|\le CM^{n}n!^{q}|x|^{n},$ for every $n \in \mathbb{N}$, $n\ge 0$ and
$x \in [0,\delta]$.
\item There exist $C',M'>0$ such that $|\psi(x)|\le C'e^{-M'/x^{\frac{1}{q}}}$, for every $x \in (0,\delta]$.
\end{enumerate}
\end{lemma}
We perform the change of variable $r^{k}=s$ into the integral (\ref{defin_tilde_I5}) and we get
$$ \tilde{I}_{5}(\epsilon t) = \frac{1}{k} \int_{0}^{r_{1}^{k}} \exp( - \frac{M_{p}^{W}}{s^{k'/k}} )
\exp( - \frac{\delta_1}{|\epsilon t|^{k}} s ) \frac{ds}{s} $$
We set $\psi_{k,k'}(s) = \exp( - M_{p}^{W}/s^{k'/k} )/s$. According to Lemma 8, two constants $C,M>0$ can be singled out with
$$ |\psi_{k,k'}(s)| \leq CM^{n}(n!)^{k/k'}|s|^{n} $$
for all $n \geq 0$, all $s \in [0,r_{1}^{k}]$. Owing to Lemma 7, we deduce that the function
$$ \tilde{I}(x) = \int_{0}^{r_{1}^{k}} \psi_{k,k'}(s) e^{-s/x} ds $$
has the formal series $\hat{0} \in \mathbb{C}[[x]]$ as asymptotic expansion of Gevrey order $\frac{k}{k'}+1$ on some segment $[0,\delta']$
with $0 < \delta' < r_{1}^{k}$. A second application of Lemma 8 implies the existence of two constants $C',M'>0$ with
$$ \tilde{I}(x) \leq C' \exp( -\frac{M'}{x^{\frac{k'}{k+k'}}} ) $$
for all $x \in [0,\delta']$. Finally, we deduce the existence of two constants $C_{I_5},M_{I_5}>0$ with
\begin{equation}
\tilde{I}_{5}(\epsilon t) \leq C_{I_5} \exp( - \frac{M_{I_5}}{|\epsilon t|^{\frac{kk'}{k+k'}}} ) \label{bds_tildeI5}
\end{equation}
for all $\epsilon \in \mathcal{E}_{p} \cap \mathcal{E}_{p+1}$, all $t \in \mathcal{T} \cap D(0,h_{p})$ for some $h_{p}>0$.

Gathering these last inequalities (\ref{bds_I5_1}) and (\ref{bds_tildeI5}) gives rise to the bounds
\begin{equation}
I_{5} \leq \frac{2k K_{p}^{W} C_{I_5}}{(2\pi)^{1/2}(\beta - \beta')}
\exp \left( - \frac{M_{I_5}}{h_{p}^{\frac{kk'}{k+k'}} |\epsilon|^{\frac{kk'}{k+k'}} } \right) \label{bds_I5}
\end{equation}
for all $\epsilon \in \mathcal{E}_{p} \cap \mathcal{E}_{p+1}$, all $t \in \mathcal{T} \cap D(0,h_{p})$ whenever $|\mathrm{Im}(z)| \leq \beta'$.

At last, the record of estimates (\ref{bds_I1}), (\ref{bds_I2}), (\ref{bds_I3}), (\ref{bds_I4}) and (\ref{bds_I5}) together with the breakup
(\ref{split_diff_up_five_parts}) yield the next inequality
\begin{multline}
\sup_{t \in \mathcal{T} \cap D(0,\sigma'), z \in H_{\beta'}} |u_{p+1}(t,z,\epsilon) - u_{p}(t,z,\epsilon)|
\leq \frac{2k (\varrho^{\mathfrak{d}_{p+1}} + \varrho^{\mathfrak{d}_{p}}) }{(2\pi)^{1/2}(\beta - \beta')} \frac{|\epsilon|^{k}}{\delta_{2} k r_{1}^{k-1}} \exp( - \delta_{2} \frac{r_{1}^{k}}{|\epsilon|^{k}} ) \\
+ \frac{2 k (\varrho^{\mathfrak{d}_{p+1}}|\gamma_{p+1} - \gamma_{p,p+1}| + \varrho^{\mathfrak{d}_{p}}|\gamma_{p} - \gamma_{p,p+1}|)
}{(2\pi)^{1/2}(\beta - \beta')} r_{1} \exp( -\delta_{2} \frac{r_{1}^{k}}{|\epsilon|^{k}} ) \\
+
\frac{2k K_{p}^{W} C_{I_5}}{(2\pi)^{1/2}(\beta - \beta')}
\exp \left( - \frac{M_{I_5}}{h_{p}^{\frac{kk'}{k+k'}} |\epsilon|^{\frac{kk'}{k+k'}} } \right)
\end{multline}
for some $\sigma'>0$ small enough, for all $\epsilon \in \mathcal{E}_{p} \cap \mathcal{E}_{p+1}$. Since $\frac{kk'}{k+k'} < k$, we finally
conclude that (\ref{exp_flat_difference_up}) holds.
\end{proof}

\section{Gevrey asymptotic expansions of the solutions in the perturbation parameter}

\subsection{Gevrey asymptotic expansions of order $1/\kappa$, $\kappa-$summable formal series and a Ramis-Sibuya theorem}

We first recall the definition of $\kappa-$summability of formal series with coefficients in a Banach space as introduced in
classical textbooks such as \cite{ba}.

\begin{defin} We set $(\mathbb{F},||.||_{\mathbb{F}})$ as a complex Banach space and we single out a real number $\kappa$ strictly larger than
$1/2$. A formal series
$$\hat{a}(\epsilon) = \sum_{j=0}^{\infty}  a_{j}  \epsilon^{j} \in \mathbb{F}[[\epsilon]]$$
with coefficients taken in $( \mathbb{F}, ||.||_{\mathbb{F}} )$ is said to be $\kappa-$summable
with respect to $\epsilon$ in the direction $d \in \mathbb{R}$ if \medskip

{\bf i)} a radius $\rho \in \mathbb{R}_{+}$ can be chosen in a way that the formal series, called formal
Borel transform of order $\kappa$ of $\hat{a}$,
$$ B_{\kappa}(\hat{a})(\tau) = \sum_{j=0}^{\infty} \frac{ a_{j} \tau^{j}  }{ \Gamma(1 + \frac{j}{\kappa}) } \in \mathbb{F}[[\tau]],$$
converge absolutely for $|\tau| < \rho$. \medskip

{\bf ii)} One can find an aperture $2\delta > 0$ in order that the series $B_{\kappa}(\hat{a})(\tau)$ can be analytically continued with
respect to $\tau$ on the unbounded sector
$S_{d,\delta} = \{ \tau \in \mathbb{C}^{\ast} : |d - \mathrm{arg}(\tau) | < \delta \} $. Moreover, there exist suitable $C >0$ and $K >0$
with the bounds
$$ ||B_{\kappa}(\hat{a})(\tau)||_{\mathbb{F}}
\leq C e^{ K|\tau|^{\kappa}} $$
whenever $\tau \in S_{d, \delta}$.
\end{defin}
If the constraints above are fulfilled, the vector valued Laplace transform of order $\kappa$ of $B_{\kappa}(\hat{a})(\tau)$
in the direction $d$ is set as
$$ L^{d}_{\kappa}(B_{\kappa}(\hat{a}))(\epsilon) = \epsilon^{-\kappa} \int_{L_{\gamma}}
B_{\kappa}(\hat{a})(u) e^{ - ( u/\epsilon )^{\kappa} } \kappa u^{\kappa-1}du,$$
along a half-line $L_{\gamma} = \mathbb{R}_{+}e^{\sqrt{-1}\gamma} \subset S_{d,\delta} \cup \{ 0 \}$, where $\gamma$ relies on
$\epsilon$ and is sort in such a way to satisfy
$\cos(k(\gamma - \mathrm{arg}(\epsilon))) \geq \delta_{1} > 0$, for some fixed $\delta_{1}$, for all
$\epsilon$ in a sector
$$ S_{d,\theta,R^{1/\kappa}} = \{ \epsilon \in \mathbb{C}^{\ast} : |\epsilon| < R^{1/\kappa} \ \ , \ \ |d - \mathrm{arg}(\epsilon) |
< \theta/2 \},$$
where the angle $\theta$ and radius $R$ withstand $0 < \theta < \frac{\pi}{\kappa} + 2\delta$ and $0 < R < \delta_{1}/K$.

It is worth noting that this Laplace transform of
order $\kappa$ differs slightly from the one displayed in Definition 1 which appears to be more suitable for the computations
related to the problems under study in this work.

The function $L^{d}_{\kappa}(B_{\kappa}(\hat{a}))(\epsilon)$
is called the $\kappa-$sum of the formal series $\hat{a}(\epsilon)$ in the direction $d$. It represents a bounded and holomorphic function on the sector $S_{d,\theta,R^{1/\kappa}}$ and turns out to be the \emph{unique} such function that possesses the formal series $\hat{a}(\epsilon)$ as Gevrey asymptotic
expansion of order $1/\kappa$ with respect to $\epsilon$ on $S_{d,\theta,R^{1/\kappa}}$. It means that for all
$0 < \theta_{1} < \theta$, there exist $C,M > 0$ such that
$$ ||L^{d}_{\kappa}(B_{\kappa}(\hat{a}))(\epsilon) - \sum_{p=0}^{n-1}
a_{p} \epsilon^{p}||_{\mathbb{F}} \leq CM^{n}\Gamma(1+ \frac{n}{\kappa})|\epsilon|^{n} $$
for all $n \geq 1$, all $\epsilon \in S_{d,\theta_{1},R^{1/\kappa}}$.\medskip

In the sequel, we state a cohomological criterion for the existence of Gevrey asymptotics of order $1/\kappa$ for proper families of sectorial
holomorphic functions and $k-$summability of formal series with coefficients in Banach spaces (see
\cite{ba2}, p. 121 or \cite{hssi}, Lemma XI-2-6) which is known as the Ramis-Sibuya theorem. This result
plays a central role in the proof of our second main statement (Theorem 2).\medskip

\noindent {\bf Theorem (RS)} {\it We consider a Banach space $(\mathbb{F},||.||_{\mathbb{F}})$ over $\mathbb{C}$ and
a good covering $\{ \mathcal{E}_{p} \}_{0 \leq p \leq \varsigma-1}$ in $\mathbb{C}^{\ast}$ (as explained in Definition 6). For all
$0 \leq p \leq \varsigma - 1$, let $G_{p}$ be a holomorphic function from $\mathcal{E}_{p}$ into
the Banach space $(\mathbb{F},||.||_{\mathbb{F}})$. We denote the cocycle $\Theta_{p}(\epsilon) = G_{p+1}(\epsilon) - G_{p}(\epsilon)$,
$0 \leq p \leq \varsigma-1$, which represents a holomorphic function from the sector $Z_{p} = \mathcal{E}_{p+1} \cap \mathcal{E}_{p}$ into
$\mathbb{F}$ (with the convention that $\mathcal{E}_{\varsigma} = \mathcal{E}_{0}$ and $G_{\varsigma} = G_{0}$).
We ask for the following requirements.

\noindent {\bf 1)} The functions $G_{p}(\epsilon)$ remain bounded as $\epsilon \in \mathcal{E}_{p}$ comes close to the origin
in $\mathbb{C}$, for all $0 \leq p \leq \varsigma - 1$.

\noindent {\bf 2)} The functions $\Theta_{p}(\epsilon)$ are exponentially flat of order $\kappa$ on $Z_{p}$, for all
$0 \leq p \leq \varsigma-1$, for some real number $\kappa > 1/2$. In other words, there exist constants $C_{p},A_{p}>0$ such that
$$ ||\Theta_{p}(\epsilon)||_{\mathbb{F}} \leq C_{p}e^{-A_{p}/|\epsilon|^{\kappa}} $$
for all $\epsilon \in Z_{p}$, all $0 \leq p \leq \varsigma-1$.

Then, for all $0 \leq p \leq \varsigma - 1$, the functions $G_{p}(\epsilon)$ share a common formal power series
$\hat{G}(\epsilon) \in \mathbb{F}[[\epsilon]]$ as Gevrey asymptotic expansion of order $1/\kappa$ on $\mathcal{E}_{p}$.

Moreover, for the special configuration where the aperture of one sector
$\mathcal{E}_{p_0}$ can be chosen slightly larger than $\pi/\kappa$, the function $G_{p_0}(\epsilon)$ is promoted as
the $\kappa-$sum of $\hat{G}(\epsilon)$ on $\mathcal{E}_{p_0}$.}

\subsection{Gevrey asymptotic expansion in the perturbation parameter for the analytic solutions to the initial value problem}

Throughout this subsection, we disclose the second central result of our work. We establish the existence of a formal power series
in the parameter $\epsilon$ whose coefficients are bounded holomorphic
functions on the product of a sector $\mathcal{T}$ with small radius centered at 0 and a strip $H_{\beta'}$ in $\mathbb{C}^2$, which
represent the common Gevrey asymptotic expansion of order $1/\kappa$, for some real number $\kappa > 1/2$ of the actual solutions
$u_{p}(t,z,\epsilon)$ of (\ref{main_PDE_u}) constructed in Theorem 1.\medskip

\noindent The second main result of this work can be stated as follows.

\begin{theo} Let $k,k'$ be the two integers considered in Theorem 1. We set
\begin{equation}
\kappa = \frac{kk'}{k+k'} \label{defin_kappa}
\end{equation}
We denote $\mathbb{F}$ the Banach space of complex valued bounded holomorphic functions on the product
$(\mathcal{T} \cap D(0,\sigma')) \times H_{\beta'}$ endowed with the supremum norm where the sector $\mathcal{T}$, radius $\sigma'>0$ and
width $\beta'>0$ are determined in Theorem 1. For all $0 \leq p \leq \varsigma - 1$, the holomorphic and bounded functions
$\epsilon \mapsto u_{p}(t,z,\epsilon)$
from $\mathcal{E}_{p}$ into $\mathbb{F}$ built up in Theorem 1 possess a common formal power series
$$ \hat{u}(t,z,\epsilon) = \sum_{m \geq 0} h_{m}(t,z) \epsilon^{m} \in \mathbb{F}[[\epsilon ]]$$
as Gevrey asymptotic expansion of order $1/\kappa$. More precisely, for all $0 \leq p \leq \varsigma-1$, we can single out two constants
$C_{p},M_{p}>0$ with
$$ \sup_{t \in \mathcal{T} \cap D(0,\sigma'),z \in H_{\beta'}} |u_{p}(t,z,\epsilon) - \sum_{m=0}^{n-1} h_{m}(t,z) \epsilon^{m}|
\leq C_{p}M_{p}^{n}\Gamma(1 + \frac{n}{\kappa}) |\epsilon|^{n}
$$
for all $n \geq 1$, whenever $\epsilon \in \mathcal{E}_{p}$.

Furthermore, if the aperture of one sector $\mathcal{E}_{p_0}$ can be taken
slightly larger than
$\pi/\kappa$, then the map $\epsilon \mapsto u_{p_0}(t,z,\epsilon)$ becomes the $\kappa-$sum of $\hat{u}(t,z,\epsilon)$ on $\mathcal{E}_{p_0}$.
\end{theo}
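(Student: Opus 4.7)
The plan is to recast the family $\{u_p\}_{0\le p\le \varsigma-1}$ from Theorem 1 as a cocycle of $\mathbb{F}$-valued holomorphic functions on the good covering $\underline{\mathcal{E}}$ and then invoke the Ramis--Sibuya theorem stated just above in the exact form displayed.

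First, I would fix the space $\mathbb{F}$ of bounded holomorphic functions on $(\mathcal{T}\cap D(0,\sigma'))\times H_{\beta'}$ with the supremum norm, and set $G_p(\epsilon)(t,z)=u_p(t,z,\epsilon)$ for each $0\le p\le \varsigma-1$. Theorem 1 precisely states that $u_p$ is bounded holomorphic on $(\mathcal{T}\cap D(0,\sigma))\times H_{\beta'}\times \mathcal{E}_p$, hence after shrinking the radius to $\sigma'<\sigma$ (as already done in the flatness bound), the map $\epsilon\mapsto G_p(\epsilon)$ defines a bounded holomorphic function from $\mathcal{E}_p$ into $\mathbb{F}$. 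This takes care of hypothesis {\bf 1)} of Theorem (RS).

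Next, I would check hypothesis {\bf 2)}. Setting the cocycle $\Theta_p(\epsilon)=G_{p+1}(\epsilon)-G_p(\epsilon)$ on $Z_p=\mathcal{E}_{p+1}\cap\mathcal{E}_p$, the flatness estimate (\ref{exp_flat_difference_up}) obtained in Theorem 1 reads exactly
\[
\|\Theta_p(\epsilon)\|_{\mathbb{F}}\le K_p\exp\!\Bigl(-\frac{M_p}{|\epsilon|^{kk'/(k+k')}}\Bigr)
\]
for all $\epsilon\in Z_p$. With the choice $\kappa=kk'/(k+k')$ from (\ref{defin_kappa}), this is precisely exponential flatness of order $\kappa$. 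Since both $k,k'\ge 1$, one has $\kappa>1/2$ as soon as one of them is $\ge 2$, and even when $k=k'=1$ the extension of Theorem (RS) to $\kappa=1/2$ is still available; in any event the regime of interest in our construction satisfies $\kappa>1/2$, so the hypothesis on the order is met.

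Applying Theorem (RS) then yields a single formal series $\hat{u}(t,z,\epsilon)=\sum_{m\ge 0} h_m(t,z)\,\epsilon^m\in\mathbb{F}[[\epsilon]]$ such that each $G_p$ admits $\hat{u}$ as Gevrey asymptotic expansion of order $1/\kappa$ on $\mathcal{E}_p$, which is exactly the claimed estimate
\[
\sup_{t\in\mathcal{T}\cap D(0,\sigma'),\,z\in H_{\beta'}} \bigl|u_p(t,z,\epsilon)-\sum_{m=0}^{n-1}h_m(t,z)\epsilon^m\bigr|\le C_p M_p^n\,\Gamma(1+n/\kappa)|\epsilon|^n.
\]
For the final sharper statement, if one of the sectors $\mathcal{E}_{p_0}$ has opening strictly larger than $\pi/\kappa$, the Watson-type uniqueness incorporated in the second part of Theorem (RS) forces the bounded holomorphic function $u_{p_0}(t,z,\epsilon)$ to coincide with the $\kappa$-sum of $\hat{u}$ on $\mathcal{E}_{p_0}$, as described in Definition 9. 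There is essentially no obstacle here beyond bookkeeping: the entire analytical burden has been absorbed into the construction of the $u_p$ and the derivation of the flatness estimate (\ref{exp_flat_difference_up}) in Theorem 1, and the only delicate point is to notice that the combined irregular/Fuchsian contributions surface exactly through the exponent $\kappa=kk'/(k+k')=(1/k+1/k')^{-1}$ coming from Watson's lemma applied inside the auxiliary integral $\tilde{I}_5$.
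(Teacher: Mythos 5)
Your proposal is correct and follows essentially the same route as the paper: define $G_p(\epsilon)=(t,z)\mapsto u_p(t,z,\epsilon)$, verify the two hypotheses of Theorem (RS) using the boundedness from Theorem 1 and the flatness estimate (\ref{exp_flat_difference_up}) with $\kappa=kk'/(k+k')$, and read off the Gevrey expansion and the $\kappa$-summability statement. The only point to tighten is your aside about $k=k'=1$: the paper simply notes that $k\geq 1$ together with $k'>k_{1}\geq 1$ (so $k'\geq 2$) forces $\kappa\geq 2/3>1/2$, so no extension of Theorem (RS) to $\kappa=1/2$ is ever needed.
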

\begin{proof} We first observe that according to the assumptions made in Theorem 1, the inequalities $k \geq 1$ and $k' > k_{1} \geq 1$
imply that $\kappa \geq 2/3 > 1/2$. We aim attention at the family of functions $u_{p}(t,z,\epsilon)$, $0 \leq p \leq \varsigma-1$ constructed in Theorem 1.
For all $0 \leq p \leq \varsigma-1$, we define $G_{p}(\epsilon) := (t,z) \mapsto u_{p}(t,z,\epsilon)$, which represents by construction a
holomorphic and bounded function from $\mathcal{E}_{p}$ into the Banach space $\mathbb{F}$ of bounded holomorphic functions on
$(\mathcal{T} \cap D(0,\sigma')) \times H_{\beta'}$ equipped with the supremum norm, where $\mathcal{T}$ is a bounded sector selected in Theorem 1,
the radius $\sigma'>0$ is taken small enough and $H_{\beta'}$ is a horizontal strip of width $0 < \beta' < \beta$. In accordance with the
bounds (\ref{exp_flat_difference_up}), we deduce that the cocycle
$\Theta_{p}(\epsilon) = G_{p+1}(\epsilon) - G_{p}(\epsilon)$ is exponentially flat of order $\kappa$ on
$Z_{p} = \mathcal{E}_{p} \cap \mathcal{E}_{p+1}$, for any $0 \leq p \leq \varsigma-1$.

Owing to Theorem (RS) described overhead, we obtain a formal power series $\hat{G}(\epsilon) \in \mathbb{F}[[\epsilon]]$
which represents the Gevrey asymptotic expansion of order $1/\kappa$ of each $G_{p}(\epsilon)$ on $\mathcal{E}_{p}$, for
$0 \leq p \leq \varsigma - 1$. Furthermore, if the aperture of one sector $\mathcal{E}_{p_0}$ can be slightly chosen larger than $\pi/\kappa$,
then the function $G_{p_0}(\epsilon)$ represents
the $\kappa-$sum of $\hat{G}(\epsilon)$ on $\mathcal{E}_{p_0}$ as described within Definition 8.
\end{proof}

\noindent {\bf Example:} In order to show that summability can actually occur, we exhibit a configuration of an admissible set of
data $\underline{\mathcal{A}}$ which allows
$6/5-$summability on one sector for the case $k=3$ and $k'=2$ through the next example of equation (\ref{main_PDE_u})
which corresponds to the settings $\delta_{D}=2$, $m_{D}=3$, $k_{1}=1$, $\kappa_{1}=2$, $\kappa_{2}=6$ and $I = \{ (1,1), (1,0) \}$,
\begin{multline}
Q(\partial_{z})u(t,z,\epsilon) = R_{D}(\partial_{z})\epsilon^{6}(t^{4}\partial_{t})^{2}(t\partial_{t})^{3}u(t,z,\epsilon) \\
+
\epsilon^{3}c_{1,1}(z,\epsilon)R_{(1,1)}(\partial_{z})(t^{4}\partial_{t})(t\partial_{t})u(t,z,\epsilon)
+ \epsilon^{3}c_{1,0}(z,\epsilon)R_{(1,0)}(\partial_{z})t^{4}\partial_{t}u(t,z,\epsilon) + f(t,z,\epsilon)
\end{multline}

A possible configuration for the sets $\underline{S}$ and $\underline{\mathcal{E}}$ is displayed in Figure~\ref{fig3}, when assuming that $\mathcal{T}$ is a sector with bisecting direction $\theta=0$, and small opening. Observe that $\kappa$-summability is obtained on one of the sectors in $\underline{\mathcal{E}}$, with opening slightly larger than $5\pi/6$. Moreover, observe that the opening of the corresponding element in $\underline{S}$ is of opening strictly larger than $\pi/2$.

\begin{figure}[h]
	\centering
		\includegraphics[width=0.48\textwidth]{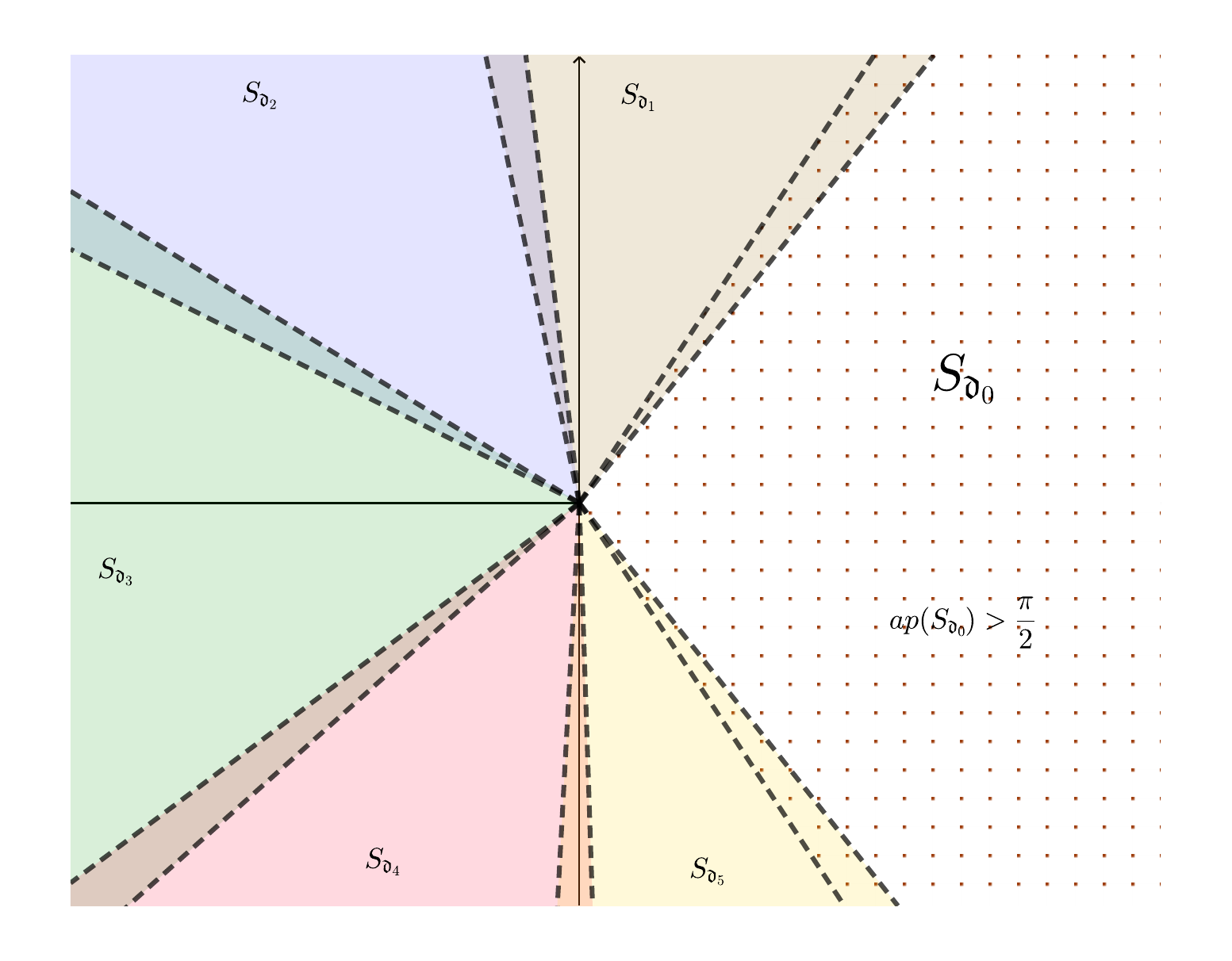}
		\includegraphics[width=0.48\textwidth]{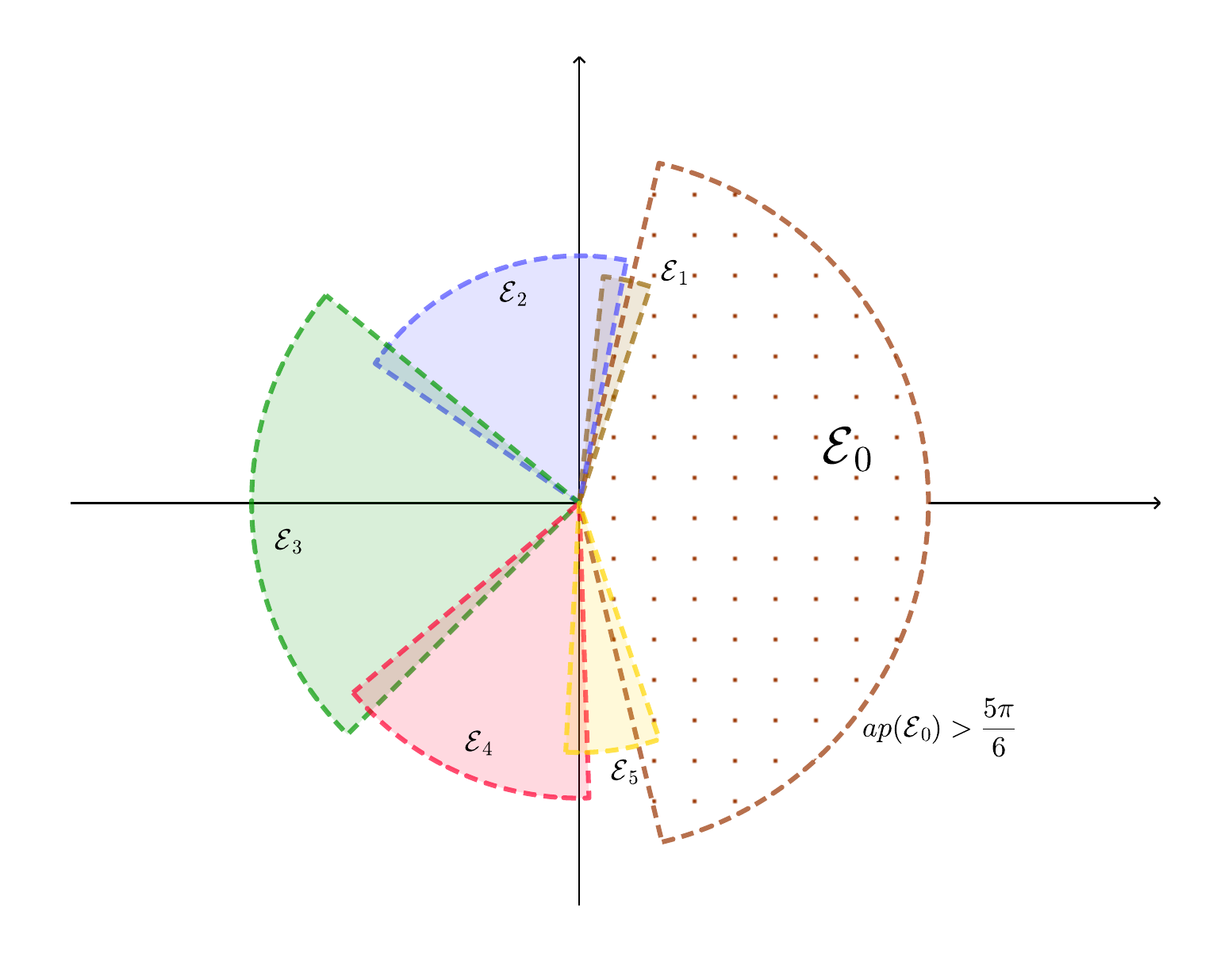}
	\caption{A configuration for summability in the Example: $S_{\mathfrak{d}_0}$ in $\underline{S}$ (left) and $\mathcal{E}_0$ in $\underline{\mathcal{E}}$ (right)}
			\label{fig3}
\end{figure}

\noindent \textbf{Acknowledgements.}  A. Lastra and S. Malek are supported by the Spanish Ministerio de Econom\'{\i}a, Industria y Competitividad under the Project MTM2016-77642-C2-1-P.

\end{document}